\documentclass[12pt]{amsart}
\usepackage{amssymb}
\usepackage{amsmath,amscd}
\usepackage{amsfonts,latexsym,verbatim,amscd,mathrsfs,color,array}
\usepackage[all,cmtip]{xy}
\usepackage{mathrsfs}
\usepackage{multirow}
\usepackage{graphicx}
\graphicspath{ {./images/} }
\usepackage{amsfonts, amsthm}
\usepackage{bbm}
\usepackage[hmargin=1in,vmargin=1in]{geometry}
\usepackage{mathdots}
\usepackage{stmaryrd}
\usepackage{MnSymbol}
\usepackage{bbm}
\usepackage[hidelinks]{hyperref}
\usepackage{enumitem}
\usepackage[all]{xy}
\usepackage{tikz-cd}

\allowdisplaybreaks

\def\XXint#1#2#3{{\setbox0=\hbox{$#1{#2#3}{\int}$ }
		\vcenter{\hbox{$#2#3$ }}\kern-.6\wd0}}

\newcommand{\transint}{\cap\kern-0.63em|\kern0.7em}

\DeclareMathSymbol{\intprod}{\mathbin}{MnSyC}{'270}

\newcommand{\p}{{ \partial}}

\newcommand{\N}{{\mathbb N}}

\renewcommand{\P}{{\mathbb P}}

\newcommand{\R}{{\mathbb R}}

\renewcommand{\H}{{\mathbb H}}

\newcommand{\del}{{\partial}}

\renewcommand{\p}{{\partial}}
\newcommand{\eps}{{\varepsilon}}

\newtheorem{thm}{Theorem}[section]
\newtheorem*{thm*}{Theorem}
\newtheorem{lemma}[thm]{Lemma}
\newtheorem*{lemma*}{Lemma}
\newtheorem{prop}[thm]{Proposition}

\newtheorem{cor}[thm]{Corollary}
\newtheorem*{cor*}{Corollary}

\newtheorem*{conj*}{Conjecture}

\newtheorem{fact}[thm]{Fact}

\newenvironment{claim}{\par\medskip\noindent\textit{Claim.}\space}{\par\medskip}
\newenvironment{claimproof}{\par\noindent\textit{Proof of claim.}\space}{\hfill$\diamond$\medskip\par}

   \newtheoremstyle{others}
     {7pt}
     {6pt}
     {}
     {}
     {\bf}
     {.}
     {.5em}
     {}

\theoremstyle{others}
\newtheorem{rmk}[thm]{Remark}
\newtheorem{rmk*}[thm]{Remark}
\newtheorem{defn}[thm]{Definition}
\newtheorem{example}[thm]{Example}

\newtheorem*{question*}{Question}

\numberwithin{equation}{section}

\title[Finiteness of BMS Measure]{Finiteness of Bowen--Margulis--Sullivan Measure for Gromov--Patterson--Sullivan Systems}
\author{Rou Wen}
\address{Department of Mathematics, University of Wisconsin, Madison}
\email{rwen5@wisc.edu}

\begin{document}
\begin{abstract}
    In this paper, we develop a notion of \emph{strongly positive reccurent} (SPR) property for a convergence group with a continuous Gromov--Patterson--Sullivan (GPS) system defined by Blayac--Canary--Zhang--Zimmer. We prove that these SPR groups admits a finite Bowen--Margulis--Sullivan (BMS) measure on some associated flow spaces, which means that dynamically they admit a cocompact action on the flow spaces. This notion of SPR groups gives rise to many new examples of subgroups in higher rank Lie group that admit finite BMS measure beyond relatively Anosov groups.
\end{abstract}
\maketitle

\section{Introduction}

The Bowen–Margulis–Sullivan (BMS) measure plays a central role in the study of dynamical systems arising from groups acting on boundaries and the associated flow spaces. Margulis in his thesis \cite{alma991086072856206532} showed that when it is finite, the BMS measure is the unique measure that realizes maximal entropy for an Anosov flow space. Moreover, having a finite BMS measure often leads to mixing properties, counting results, and equidistribution of periodic orbits in the associated flow spaces in many settings (see e.g. Ricks in \cite{2014arXiv1410.3921R} for rank one CAT(0) spaces, Roblin \cite{MSMF_2003_2_95__1_0} for negatively curved manifolds, Blayac--Zhu \cite{2021arXiv210608079B} for rank one convex projective geometry, and Blayac--Canary--Zhu--Zimmer \cite{2024arXiv240409718B} and Kim--Oh \cite{KimOh+2025+91+142} for relative Anosov groups etc.). Thus it is important to characterize when this measure is finite.

For a geometrically finite Kleinian group $\Gamma \subset SO(d,1)$, Sullivan \cite{10.1007/BF02392379} showed that the BMS measure on the unit tangent bundle of the hyperbolic space $M:= \Gamma \backslash \H^d$ is finite. Later, Dal'bo--Otal--Peign\'e \cite{dalbootalpeigne} generalized this result to geometrically finite subgroups $(\Gamma, \mathcal{P})$ of the isometry group of negatively curved manifolds, provided that the critical exponent of each parabolic subgroup $P \in \mathcal{P
}$ is strictly smaller than the critical exponent of $\Gamma$. 

These results also have a higher rank analogy. Let $G$ be a semi-simple Lie group of non-compact type that has rank at least two. Kim--Oh \cite{KimOh+2025+91+142} showed finiteness for a whole family of BMS measures (parametrized by certain linear functionals on the Cartan subalgebra) associated to a relative Anosov subgroup of $G$. Wen \cite{2025arXiv250412448W} obtained similar result for relative Anosov group for a different flow space compared to the Kim--Oh construction. We will discuss the nuance in Section \ref{rel anosob}.

It seems that geometrically finiteness is a good candidate to characterize the finiteness of BMS measure. Sullivan proposed this question for hyperbolic spaces: does there exist a geometrically infinite subgroup of $SO(d,1)$ that admits a finite BMS measure? Peign\'e gave a positive answer to this question for $d>3$ in \cite{peigne}. This indicates that geometrically finiteness is a sufficient but not necessary condition for finite BMS measure. In fact, Pit--Schapira \cite{2016arXiv161003255S} developed a sufficient and necessary condition, namely positive recurrence, for finite BMS measure for general discrete subgroups of isometry group of pinched negatively curved Riemannian manifolds. 

 A natural question to ask then is: can a similar criterion for finiteness of BMS measure be established in the higher rank setting? One of the goals of this paper is to answer this question positively. In other words, we generalize the positive recurrence condition defined by Pit--Schapira into the higher rank setting, and show that it implies finite BMS measure.

 We actually work with the more general setting of Gromov–Patterson–Sullivan (GPS) systems for convergence groups introduced by Blayac--Canary--Zhu--Zimmer \cite{2024arXiv240409713B}. 
The precise definition of a GPS system is hard to summarize in this introduction, so we state our main results in terms of transverse subgroups of higher rank Lie group, which is one of the main applications of the GPS system framework. 

Let $G$ again be a semi-simple Lie group of non-compact type with rank $\geq 2$, and let $P_{\theta}$ be a parabolic subgroup of $G$ associated with a set of simple roots $\theta$. Fix a $KA^+K$ decomposition of $G$, where $K$ is a maximal compact subgroup in $G$, and $A^+$ is the image of the positive Weyl chamber $\mathfrak{a}^+$ of a Cartan subalgebra under the exponential map. Let $\kappa: G \rightarrow \mathfrak{a}^+$ denote the Cartan projection. A discrete subgroup $\Gamma \subset G$ is a transverse subgroup if $\alpha(\kappa(g_n)) $ diverges for all escaping sequences $\{g_n \} \subset \Gamma$ and all $\alpha \in \theta$, and its limit set $\Lambda_{\theta}(\Gamma)$ is a transverse subset of the partial flag manifold $\mathcal{F}_{\theta}:= G /P_{\theta}.$ 

Fix a linear functional
 $\phi \in (\mathfrak{a}^+)^*$ that is positive on the Benoist limit cone of $\Gamma$, then the associated critical exponent $\delta_{\phi}(\Gamma)<+\infty$, and there exists a Patterson--Sullivan density $\mu$ of dimension $\delta_{\phi}(\Gamma)$ supported on $\Lambda_{\theta}(\Gamma)$ \cite[Proposition 2.7 $\And$ 3.2]{2023arXiv230411515C}. Furthermore, $\phi$ defines a $\Gamma$ action on the flow space $$\tilde{\Omega}_{\Gamma}:= \Lambda_{\theta}(\Gamma)^{(2)} \times \R,$$ where $\Lambda_{\theta}(\Gamma)^{(2)}$ consists of transverse pairs in the product $\Lambda_{\theta}(\Gamma)\times \Lambda_{\theta}(\Gamma)$, and $\mu$ induces a BMS measure $m^{BMS}$ on the quotient $\Gamma \backslash\tilde{\Omega}_{\Gamma}$.

 The subgroup $\Gamma$ is said to be \emph{positive recurrent} with respect to some compact set $K \subset \tilde{\Omega}_{\Gamma}$ if the Poincar\'e series associated to $\Gamma$ and $\phi$ is divergent at $\delta_{\phi}(\Gamma)$, and $$\sum_{g \in \Gamma_K} \phi(\kappa(g)) \cdot \exp\left(-\delta_{\phi}(\Gamma)\cdot \phi(\kappa(g))\right) < +\infty.$$
 Here $\Gamma_K$ should be understood as a subset of $\Gamma$ that consists of elements ``outside" of $K$, the precise definition can be found in Section \ref{finite bms}.


\begin{cor*}[Theorem \ref{pr implies finite}]
    If there exists a compact set $K \subset \Tilde{\Omega}_{\Gamma}$ such that $\Gamma$ is positive recurrent with respect to $K$, then $m^{BMS}(\Gamma \backslash \tilde{\Omega}_{\Gamma})$ is finite.
\end{cor*}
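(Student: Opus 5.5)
We adapt the Pit--Schapira argument for pinched negative curvature to the GPS framework; it is the specialization of Theorem~\ref{pr implies finite} to transverse groups. \emph{Step 1.} Replace the flow space by a neighborhood of a compact set. We choose a compact $K \subset \tilde{\Omega}_{\Gamma}$ witnessing positive recurrence, and we may enlarge it to $K = \{(x,y,t) : (x,y)\in U^-\times U^+,\ |t|\le 1\}$ with $U^\pm$ relatively compact, transverse open subsets of $\Lambda_\theta(\Gamma)$. By divergence of the Poincar\'e series and the Hopf--Tsuji--Sullivan dichotomy available for GPS systems (from Blayac--Canary--Zhu--Zimmer), the flow on $\Gamma\backslash\tilde\Omega_\Gamma$ is conservative and ergodic with respect to $m^{BMS}$. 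Hence $m^{BMS}$-a.e.\ orbit returns to $\Gamma K$, and the flow-invariance of $m^{BMS}$ combined with a Kac-type formula gives
\[
m^{BMS}(\Gamma\backslash\tilde\Omega_\Gamma) \le C\Big( m^{BMS}(K) + \int_{K} \tau_K \, dm^{BMS}\Big),
\]
where $\tau_K(v)$ is the first return time of $v$ to $\Gamma K$. Since $K$ is compact, $m^{BMS}(K)<\infty$, so it suffices to bound the return-time integral.

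\emph{Step 2.} Decompose by group elements. Take $v\in K$ with first return $\phi_{\tau}v \in gK$. Then the orbit segment between $K$ and $gK$ avoids $\Gamma K$ in between, and this is exactly the condition $g\in\Gamma_K$ from the definition in Section~\ref{finite bms}. We bound $\tau_K(v)$ by $\phi(\kappa(g)) + C$; here we use that flow time between $K$ and $gK$ is comparable to the Busemann/Iwasawa cocycle $\sigma_\phi(g,\cdot)$, which in turn is comparable to $\phi(\kappa(g))$ on the relevant shadows. So the integral is at most
\[
\sum_{g\in\Gamma_K}\big(\phi(\kappa(g))+C\big)\, m^{BMS}\big(K\cap \phi_{-[\,\cdot\,]}\, gK\big).
\]

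\emph{Step 3.} Estimate each term via the shadow lemma. The set of $v \in K$ whose forward orbit meets $gK$ projects, in the $(x,y)$ coordinates, into $U^-\times \mathcal{O}(g)$, where $\mathcal{O}(g)$ is a shadow of $gU^+$ seen from $U^-$. The BMS density $e^{\delta\,[\cdot,\cdot]}\,d\mu\,d\mu$ is bounded on the transverse compact pair. The shadow lemma for GPS systems then gives $\mu(\mathcal{O}(g))\le C e^{-\delta_\phi(\Gamma)\phi(\kappa(g))}$. Combining this with the positive recurrence hypothesis yields the finite bound
\[
\sum_{g\in\Gamma_K}\phi(\kappa(g))\,e^{-\delta_\phi(\Gamma)\phi(\kappa(g))}<\infty .
\]
The additive constant term $C\sum_{g \in \Gamma_K} e^{-\delta\phi(\kappa(g))}$ is dominated by the same series, since $\phi(\kappa(g))\ge 1$ for all but finitely many $g$.

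\emph{Main obstacle.} The hard step is Step 2. We must show that each first return is indexed by an element of $\Gamma_K$ with uniformly bounded multiplicity, and that the return time is $\phi(\kappa(g))+O(1)$ uniformly. This requires uniform comparisons of the period cocycle with $\phi\circ\kappa$ on transverse compact sets, using the contraction estimates for transverse groups and properness of the $\Gamma$-action on $\tilde{\Omega}_\Gamma$. We would first try to extract these from the GPS-system axioms (continuity and the ``coarse additivity'' of the magnitude function) rather than from Lie-theoretic estimates.
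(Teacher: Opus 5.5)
Your overall route matches the paper's. Divergence plus the dichotomy gives an ergodic, conservative flow (Proposition \ref{erg of flow}). Kac's lemma then reduces finiteness to a return-time integral, returns are indexed by elements of $\Gamma_K$, return time is compared with the magnitude, and the set of points making a given excursion is controlled by a shadow and the shadow lemma. Your integral form of Kac is fine once $\tau$ is the $\eps$-hitting time $\tau_{\eps,K}$ (a plain first-return time vanishes on $K$) and the integral runs over $K_\eps$: Proposition \ref{kac's lemma} bounds $m(\Omega)$ by a constant times $\int_{K_\eps}\tau_{\eps,K}\,dm$. Splitting this integral directly by group element needs only an upper bound on return times, and it spares you the paper's count of how many windows $\Gamma_{\tilde K,k}$ contain a given $\gamma$.

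However, two steps are wrong as written.

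\emph{Enlarging $K$.} You cannot in general enlarge $K$ to a single box $U^-\times U^+\times[-1,1]$. Compactness of the box in $\tilde\Omega_\Gamma$ forces $\overline{U^-}\cap\overline{U^+}=\emptyset$, which is impossible as soon as the projection of $K$ to $\Lambda^{(2)}$ contains pairs $(a,b)$ and $(b,c)$. Moreover, $\Gamma_K$ changes with $K$, so positive recurrence does not pass to a larger set for free. The paper proves such a transfer only when $K_0\subset K_1\subset\mathrm{int}(K_2)$ (Proposition \ref{containment of Gamma_K}), and the statement does not assume this. The step is also unnecessary. The paper keeps the given $\tilde K$ (its image in the quotient must have positive measure for Kac) and uses only that $G$ is bounded on the compact projection of $\tilde K$. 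It then shows $\mathcal U_{\tilde K,\gamma}\subset S_\eps(\gamma)\times\gamma S_\eps(\gamma^{-1})\times I$ (Lemma \ref{bound geod ball by shadows}), so no product structure is needed.

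\emph{Indexing by $\Gamma_K$.} The orbit segment up to the first return is not automatically an excursion in the sense of $\Gamma_K$. Since $K$ meets finitely many of its own translates, just after time $0$ (up to time $\eps$ for the $\eps$-hitting time) the orbit may lie in some $hK$ with $h\neq e$ but not in $K$. That violates $\tilde\psi^{[0,t]}(v)\cap\Gamma K\subset K\cup gK$. The Claim inside Proposition \ref{est of magnitude} repairs this. Take the finite set $S=\{g: g\tilde K\cap\tilde K_{[-\eps,\eps]}\neq\emptyset\}$. The return element is then $\gamma_0=g_u\gamma g_v^{-1}$ with $g_u,g_v\in S$ and $\gamma\in\Gamma_{\tilde K}$. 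Fact \ref{prop of cocycle} moves both magnitude estimates from $\gamma_0$ to $\gamma$, at the cost of a factor $|S|^2$.

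\emph{Your main obstacle.} This needs no Lie-theoretic input; the paper extracts it from the GPS axioms, as you hoped. Lemma \ref{geod ball and shadows} uses a convergence-dynamics contradiction argument and the last item of Fact \ref{prop of cocycle}. It shows that if $v=(v^+,v^-,s)\in K$ and $\tilde\psi^t(v)\in\gamma K$ with $t\ge R$, then $v^+\in S_\eps(\gamma)$. Shorter returns involve only finitely many $\gamma$, by properness. Since $\tilde x$ and $\gamma_0^{-1}\tilde\psi^T(\tilde x)$ both lie in $\tilde K$, the cocycle identity gives $|T-\sigma'(\gamma_0,\gamma_0^{-1}\tilde x^+)|\le E'$. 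The first item of Fact \ref{properties of shadows}, applied to $\sigma'$, then gives $\sigma'(\gamma_0,\gamma_0^{-1}\tilde x^+)=\|\gamma_0\|_{\sigma'}+O(1)$. The same inclusion $v^+\in S_\eps(\gamma)$ is what your Step 3 actually requires. The shadow lemma (Theorem \ref{shadow lemma}) concerns the sets $S_\eps(\gamma)=\gamma(M\smallsetminus B_\eps(\gamma^{-1}))$, not an informal shadow of $gU^+$ seen from $U^-$.
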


We also introduce a stronger condition, called strongly positive recurrence (SPR). This condition is formulated in terms of the entropy at infinity $\delta_{\infty}(\Gamma)$ of the subgroup $\Gamma$, defined using the critical exponents of $\Gamma_K$ for large compact subsets of $\tilde{\Omega}_{\Gamma}$. The convergence group $\Gamma$ is said to be \emph{strongly positively recurrent} if $\delta_{\infty}(\Gamma) <\delta_{\phi}(\Gamma)$.

\begin{cor*}[Theorem \ref{spr implies finite BMS}]
    If $\Gamma$ is SPR, then $m^{BMS}(\Gamma \backslash \tilde{\Omega}_{\Gamma})$ is finite.
\end{cor*}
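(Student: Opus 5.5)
The plan is to reduce the statement to the previous corollary (Theorem \ref{pr implies finite}). We show that strong positive recurrence yields two things: divergence of the Poincar\'e series at $\delta_\phi(\Gamma)$, and a compact set $K\subset\tilde{\Omega}_\Gamma$ for which the weighted series over $\Gamma_K$ converges.

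\emph{Choosing $K$.} Since $\delta_\infty(\Gamma)<\delta_\phi(\Gamma)$, and $\delta_\infty$ is defined as a limit, or infimum, over an exhausting family of compact sets, we can pick a compact $K$ with $\delta(\Gamma_K)<\delta_\phi(\Gamma)$. Here $\delta(\Gamma_K)$ is the critical exponent of $\sum_{g\in\Gamma_K}e^{-s\phi(\kappa(g))}$. Fix $\epsilon>0$ with $\delta(\Gamma_K)+\epsilon<\delta_\phi(\Gamma)$. Since $\phi(\kappa(g))\le C_\epsilon e^{\epsilon\phi(\kappa(g))}$, and $\phi(\kappa(g))\ge 0$ up to finitely many elements because $\phi$ is positive on the limit cone, we get
\[
\sum_{g\in\Gamma_K}\phi(\kappa(g))e^{-\delta_\phi(\Gamma)\phi(\kappa(g))}\le C_\epsilon\sum_{g\in\Gamma_K}e^{-(\delta_\phi(\Gamma)-\epsilon)\phi(\kappa(g))}<\infty,
\]
because $\delta_\phi(\Gamma)-\epsilon>\delta(\Gamma_K)$. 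This settles the second half of positive recurrence.

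\emph{Divergence.} This is the step I expect to be the main obstacle: showing that $\Gamma$ is of divergence type at $\delta_\phi(\Gamma)$. I would adapt the classical argument of Schapira--Tapie and Dal'bo--Otal--Peign\'e for the gap between the entropy at infinity and the critical exponent. The first ingredient is a decomposition of an arbitrary $g\in\Gamma$ into a bounded number of ``returns to $K$'' separated by excursions lying in $\Gamma_K$. This should be available from the definition of $\Gamma_K$, via the shadow and multiplicativity estimates for $\phi\circ\kappa$ in the GPS framework, with $\phi(\kappa(gh))\approx\phi(\kappa(g))+\phi(\kappa(h))$ up to additive constants when the concatenation passes through $K$. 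The Poincar\'e series then becomes a renewal-type sum
\[
P(s)\asymp\sum_n \bigl(C\,P_K(s)\bigr)^n P_{K'}(s),
\]
where $P_K(s)=\sum_{\Gamma_K}e^{-s\phi(\kappa(g))}$ stays finite near $\delta_\phi(\Gamma)$. If $P$ converged at $\delta_\phi(\Gamma)$, a Hopf--Tsuji--Sullivan-type argument would give a contradiction. Concretely, the divergence criterion should follow because the series of $\Gamma$ is built from pieces controlled by a generating function $F(s)$ with $F(\delta_\phi)\ge1$, and the strict gap forces $F$ to be analytic past $\delta_\infty$. The alternative route is to show directly that conical limit points have full Patterson--Sullivan measure: the excursion series converging below $\delta_\phi$ implies, by a Borel--Cantelli argument on shadows, that $\mu$-almost every geodesic returns to $K$ infinitely often. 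I would try this route first, since conservativity plus the Hopf--Tsuji--Sullivan dichotomy for GPS systems, available from Blayac--Canary--Zhu--Zimmer, then gives divergence.

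With both conditions in hand, Theorem \ref{pr implies finite} yields $m^{BMS}(\Gamma\backslash\tilde{\Omega}_\Gamma)<\infty$.
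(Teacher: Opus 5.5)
\textbf{The divergence step is a genuine gap.} Your reduction is correct and is the paper's own. You pick $K$ with $\delta_\sigma(\Gamma_K)<\delta$, absorb the linear weight via $x\le C_\eps e^{\eps x}$ (this is condition \eqref{relate magnitudes}, automatic here since both magnitudes are $\phi\circ\kappa$), and apply Theorem \ref{pr implies finite}. The problem is that divergence at $\delta$ is the real content of the result. It is Theorem \ref{spr implies divergent}, proved as part of this statement, and neither of your sketches establishes it.

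Your renewal route starts from a false premise. The number of returns to $\Gamma.K$ needed to reach $g.K$ is not bounded. For a convex cocompact group, $\Gamma_K$ is finite for large $K$, so a bounded number of excursions would force $\Gamma$ to be finite. Moreover, quasi-multiplicativity only gives $c^n$ and $C^n$ with $c<1<C$ for an $n$-fold concatenation, and decompositions are not unique. So finiteness of $P_K$ near $\delta$ says nothing about whether $P(\delta)=\infty$, and ``$F(\delta)\ge 1$'' is exactly what would need proof.

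The route through conical points is the right one, but ``Borel--Cantelli on shadows'' cannot deliver it. By Lemma \ref{limit set outside of K}, the non-conical points lie in $\mathcal{L}_{K^c}=\Gamma.\Lambda_{K^c}$, the endpoints of flow lines that leave $\Gamma.K$ forever. The shadow lemma plus convergence of $\sum_{\Gamma_K}e^{-\delta\|g\|_\sigma}$ only bounds the mass of points whose first excursion is long but terminates; these are the points Lemma \ref{geod ball and shadows} places in $S_\eps(g)$ with $g\in\Gamma_K$. Points of $\Lambda_{K^c}$ are only limits of such shadows, and a set in the closure of a union of small-measure sets need not be small. Also, the first Borel--Cantelli lemma gives ``finitely often'' conclusions, whereas what must be excluded is a single infinite excursion.

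The missing idea, which the paper uses, is to exploit how $\mu$ is built. Take $\mu$ as a weak-$*$ limit of Patterson's measures $\mu_s$ supported on $\Gamma$, with a slowly varying weight $\chi$ since divergence is not yet known. Show that $\Lambda_{K_2^c}$ has an open neighborhood in $\Gamma\sqcup\Lambda_\Gamma$ contained in $U_T(K_1,F)$ (Lemma \ref{nbhd of Lambda_K2c}), and bound its $\mu$-mass by $\liminf_{s\searrow\delta}\mu_s$ of it.

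Orbit points, unlike boundary points, always see a terminating excursion. Every $\gamma\in U_T(K_1,F)\cap\Gamma$ lies in a cone $h.\mathcal{O}_{K_1}(\bar g.K_1)$ with $h\in F$, $\bar g\in\Gamma_{K_1}$ and $t_{\bar g}>T-R$. Quasi-multiplicativity on that cone (Corollary \ref{cor of geod ball and shadow}, Fact \ref{prop of cocycle}) bounds its $\mu_s$-mass by a constant times $e^{-(s-\eps')\|\bar g\|_\sigma}$. Hence $\mu(\mathcal{U}_T(\Lambda_{K_2^c}))$ is controlled by the tail $\sum_{\bar g\in\Gamma_{K_1},\,t_{\bar g}>T-R}e^{-(\delta-\eps')\|\bar g\|_\sigma}$, which tends to $0$ by the SPR gap (Proposition \ref{measure of U_T}). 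This gives $\mu(\mathcal{L}_{K_2^c})=0$, hence $\mu(\Lambda_\Gamma^{con})=1$, and Theorem \ref{dichotomy} then yields divergence.
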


A key step in the proof of the above corollary is to show that the SPR condition forces the Poincaré series of $\Gamma$ to diverge at the critical exponent.

\begin{cor*}[Theorem \ref{spr implies divergent}]
    If $\Gamma$ is SPR, then the Poincar\'e series associates with $\Gamma$ and $\phi$ is divergent at $\delta_{\phi}(\Gamma).$
\end{cor*}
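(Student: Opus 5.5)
We would prove the statement in the spirit of Schapira--Tapie and Pit--Schapira: we decompose every element of $\Gamma$ according to its passages through a fixed compact set $K\subset\tilde{\Omega}_\Gamma$. Choose $K$ large enough that $\delta_{\Gamma_K}<\delta_\phi(\Gamma)$. This is possible because $\delta_\infty(\Gamma)<\delta_\phi(\Gamma)$ and $\delta_\infty$ is defined as a limit (or infimum) of the $\delta_{\Gamma_K}$ over large compact sets.

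\textbf{Step 1: decomposition.} The first step is a decomposition lemma. Every $g\in\Gamma$ should be writable as a product
\[
g=h_1 \gamma_1 h_2\gamma_2\cdots \gamma_{k} h_{k+1},
\]
where the $h_i\in\Gamma_K$ (or are trivial) record excursions outside $K$, and the $\gamma_i$ lie in a fixed finite set $F\subset\Gamma$ recording returns to $K$. The decomposition must satisfy a quasi-additivity estimate
\[
\phi(\kappa(g))\ge \sum_i \phi(\kappa(h_i)) - kC
\]
for a uniform constant $C$. It must also be finite-to-one, with multiplicity growing at most exponentially in $k$. To establish this, we would use the cocycle and continuity properties of the GPS system (or the transverse-group estimates $\phi(\kappa(gh))\approx\phi(\kappa(g))+\phi(\kappa(h))$ when the relevant attracting and repelling flags are uniformly transverse). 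The times at which the flow line visits $K$ give exactly this uniform transversality.

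\textbf{Step 2: a divergence criterion.} Next we prove the standard criterion: if $\delta_{\Gamma_K}<\delta_\phi(\Gamma)$, then $P_\Gamma(s)$ diverges at $s=\delta:=\delta_\phi(\Gamma)$. Argue by contradiction, assuming $P_\Gamma(\delta)<\infty$. Define the weighted series
\[
\Phi(s):=\sum_{\gamma\in F}\ \sum_{h\in\Gamma_K\cup\{e\}} e^{-s\phi(\kappa(\gamma h))}.
\]
Since $\delta_{\Gamma_K}<\delta$, $\Phi$ is finite and continuous on a neighborhood $(\delta_{\Gamma_K},\infty)$ of $\delta$. Conversely, from the decomposition with the reverse inequality (subadditivity $\phi(\kappa(gh))\le\phi(\kappa(g))+\phi(\kappa(h))+C'$), concatenating words gives the lower bound
\[
P_\Gamma(s)\ \gtrsim\ \sum_k \big(c\,\Phi(s)\big)^k,
\]
with the free choice of concatenations being injective up to bounded multiplicity. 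Hence $P_\Gamma(s)=\infty$ whenever $c\Phi(s)\ge1$. Meanwhile the upper bound from Step 1 gives $P_\Gamma(s)\lesssim \sum_k (C''\Phi(s))^k$. So $\delta$ is characterized as the threshold where $\Phi(s)$ crosses a constant.

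The key point is that $\Phi$ stays finite in a neighborhood of $\delta$, so at $s=\delta$ the geometric series is already at its critical ratio. More precisely: if $P_\Gamma(\delta)<\infty$, then by continuity of $\Phi$ there is $s<\delta$ with $P_\Gamma(s)<\infty$, contradicting the definition of $\delta$. The constants $c$ and $C''$ are mismatched, however, so we would instead run the argument with $F$ replaced by longer return blocks. Alternatively, we would use the trick of Dal'bo--Otal--Peign\'e, which uses the divergence of the Poincar\'e series of $F$-concatenations, to make the multiplicative constants asymptotically irrelevant.

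\textbf{Main obstacle.} The hard step is Step 1 in this generality: producing a decomposition with quasi-additive $\phi\circ\kappa$ from the convergence-group and GPS data alone, without CAT($-1$) geometry. We would first try to extract it from the shadow lemma and the Busemann-type cocycle of the GPS system. The transversality of endpoints of flow lines through $K$ should yield the uniform additive error. Controlling the multiplicity of decompositions would then follow from properness of the $\Gamma$-action on $\tilde{\Omega}_\Gamma$.
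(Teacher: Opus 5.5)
There is a genuine gap, and it sits in Step 2, not in Step 1. Bounds of the form $\sum_k (c\,\Phi(s))^k\lesssim P_\Gamma(s)\lesssim\sum_k (C''\Phi(s))^k$ with $c<C''$ only place $\Phi(\delta)$ in $[1/C'',1/c]$. Continuity of $\Phi$ near $\delta$ forces $C''\Phi(\delta)\ge 1$, since otherwise $P_\Gamma$ would converge somewhere below $\delta$. Divergence at $\delta$, however, needs $c\,\Phi(\delta)\ge 1$, and nothing in your argument decides between the two. So $\delta$ is not characterized as ``the threshold where $\Phi$ crosses a constant''.

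Your proposed repairs do not close this. Longer return blocks only help through a Fekete/spectral-radius argument. That needs block sums $a_n(s)$ that are quasi-multiplicative in both directions, and it needs $P_\Gamma(s)\ge c_0\sum_n a_n(s)$ with $c_0$ independent of $n$. In other words, you need a coding of $\Gamma$ by excursion words with uniformly bounded multiplicity, together with quasi-subadditivity of $\|\cdot\|_\sigma$ on every admissible concatenation. Neither is available here:
\begin{itemize}
\item the GPS estimates give $\|\alpha\beta\|_\sigma\le\|\alpha\|_\sigma+\|\beta\|_\sigma+C$ only when $d(\alpha^{-1},\beta)>\eps$ (Fact \ref{prop of cocycle});
\item a given $g$ generally has many excursion decompositions, because different flow lines from $K$ to $g.K$ meet different intermediate translates.
\end{itemize}
Your own Step 1 allows multiplicity exponential in $k$. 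That puts a factor $M^{-k}$ into the lower bound and reproduces the same mismatch at every block length.

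The appeal to Dal'bo--Otal--Peign\'e is also misplaced: their divergence theorem for geometrically finite groups is proved through the Patterson measure, not by a concatenation trick. By contrast, the upper-bound half of Step 1 is routine. Telescoping the cocycle identity along one flow line and applying Lemma \ref{geod ball and shadows} and Fact \ref{properties of shadows} to each excursion gives quasi-additivity of $\|\cdot\|_\sigma$, with error linear in the number of pieces.

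The missing idea is to let the Hopf--Tsuji--Sullivan dichotomy (Theorem \ref{dichotomy}) supply the sharp information at $\delta$. The paper's argument runs as follows.
\begin{itemize}
\item It builds a Patterson--Sullivan measure $\mu$ as a weak-* limit of Patterson's $\chi$-weighted orbit measures $\mu_s$, $s\searrow\delta$. This works whether or not $\Gamma$ is divergent.
\item It shows $\mu(\mathcal{L}_{K_2^c})=0$. Here $\mathcal{L}_{K_2^c}$ is the set of forward endpoints of flow lines that eventually stop meeting $\Gamma.K_2$.
\item To do so, it bounds $\mu$ of the open neighbourhoods $\mathcal{U}_T(\Lambda_{K_2^c})$ by $\liminf_{s\searrow\delta}\mu_s$.
\item It bounds the $\mu_s$-mass of orbit points reached only after an excursion of length $>T-R$ outside $\Gamma.K_1$ by the tail $\sum_{\bar g\in\Gamma_{K_1},\,t_{\bar g}>T-R}e^{-(s-\eps')\|\bar g\|_\sigma}$. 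This tail is finite because $\delta_\sigma(\Gamma_{K_1})<\delta$, and it tends to $0$ as $T\to\infty$.
\item Since $\Lambda_\Gamma\smallsetminus\mathcal{L}_{K^c}\subset\Lambda_\Gamma^{con}$ (Lemma \ref{limit set outside of K}), the conical set has full $\mu$-measure, and the dichotomy forces $Q_{\Gamma,\sigma}(\delta)=\infty$.
\end{itemize}
The gap $\delta_\sigma(\Gamma_K)<\delta$ is only ever used as convergence at an exponent strictly below $\delta$. No constants have to match at the threshold, and this is exactly what your renewal argument has no substitute for.
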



The SPR subgroups include relative Anosov groups, which are known to have finite BMS measures (\cite{KimOh+2025+91+142}, \cite{2025arXiv250412448W}, \cite{2024arXiv240409713B}).
We also showed that the entropy at infinity behaves nicely under Schottky products of convergence groups, which provides a way to construct new examples that are not relative Anosov whose associated flow spaces admit finite BMS measures. 

The paper is organized as follows. In Section 2 we review convergence group actions, expanding cocycles, Patterson–Sullivan measures, and GPS systems. In Section 3 we introduce the notion of positive recurrence and prove that it implies finiteness of the BMS measure. In Section 4 we define entropy at infinity and strongly positive recurrence, and we prove that SPR groups satisfy the positive recurrence condition and therefore admit finite BMS measure. In Section 5 we show relative Anosov groups are SPR, as well as construct new examples of SPR groups. 

\subsection*{Acknowledgment} 
I would like to thank Pierre-Louis Blayac for raising this question to me in the first place and having several helpful discussions later. I am very grateful to Andrew Zimmer for generously sharing his knowledge of the GPS system with me, and Barbara Schapira for her warm encouragements and explanation of her work. I also thank Alex Nolte and Fanny Kassel for giving me the inspiration for constructing Example \ref{projective geom}.

The author is supported by NSF grant DMS-2105580, Institut Henri Poincaré (UAR 839 CNRS-Sorbonne Université) and LabEx CARMIN (ANR-10-LABX-59-01).

\section{Background}

\subsection{Convergence Group Action}\label{conv group action}
 Let $M$ be a compact metrizable space. 
\begin{defn}[Convergence Group]
   A subgroup $\Gamma \subset Homeo(M)$ acts on $M$ as a \emph{convergence group} if for every sequence $\{\gamma_n\} $ consisting of distinct elements, there exist $x,y \in M$ and a subsequence $\{\gamma_{n_j}\}$ such that $\gamma_{n_j} |_{M\setminus \{y\}}$ converges locally uniformly to $x$.
\end{defn}

Let $\Gamma$ be a convergence group acting on $M$. Associated to $\Gamma, $ we can define the \emph{limit set}, denoted by $\Lambda_{\Gamma}\subset M$, as the set of points $x \in M$ where there exist $y \in M$ and a sequence $\{\gamma_n\} \subset \Gamma$ such that $\gamma_n|_{M\setminus \{y\}}$ converges locally uniformly to $x$. We call the group $\Gamma$ \emph{non-elementary} if its limit set $\Lambda_{\Gamma}$ consists of more than two points, we will assume $\Gamma$ is non-elementary from now on.

A point $x \in \Lambda_{\Gamma}$ is a \emph{conical limit point } if there exist distinct points $a,b \in M$ and a sequence $\{\gamma_n\}\subset \Gamma$ such that $\gamma_n.x$ converges to $a$ and $\gamma_n|_{M\setminus\{x\}}$ converges to $b$. We will use $\Lambda_{\Gamma}^{con}$ to denote the set of conical limit points in $\Lambda_{\Gamma}$. Furthermore, elements in $\Gamma$ can be classified by the type of limit points they have.

\begin{fact}[Classification of Elements \cite{Tukia}]
An element $\gamma \in \Gamma$ is either:
    \begin{itemize}
        \item \emph{loxodromic} if it has two distinct fixed points $\gamma^{\pm} \in \Lambda_{\Gamma}$ such that $\gamma^{\pm n}|_{M\setminus\{\gamma^{\mp}\}}$ converges locally uniformly to $\gamma^{\pm}$, i.e. $\gamma^{\pm}$ are conical limit points;
        \item \emph{parabolic} if it has one fixed point $p \in 
        \Lambda_{\Gamma}$ such that $\gamma^{\pm n}|_{M\setminus\{p\}}$ converges to $p$ locally uniformly, i.e. $p$ is not a conical limit point;
        \item \emph{elliptic} if it has finite order.
\end{itemize}
\end{fact}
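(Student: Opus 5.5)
The plan is to follow Tukia's classical argument, which uses only the convergence property and a trichotomy according to the order of $\gamma$ and the behaviour of the sequence of powers $\{\gamma^n\}_{n\in\mathbb Z}$. If $\gamma$ has finite order it is elliptic and there is nothing to prove. So assume $\gamma$ has infinite order. Then the powers $\gamma^n$ are pairwise distinct. By the convergence property, some subsequence $\gamma^{n_j}$ with $n_j\to+\infty$ converges locally uniformly to a point $a$ on $M\setminus\{b\}$. Likewise some subsequence $\gamma^{-m_j}$ with $m_j \to +\infty$ converges locally uniformly to a point $a'$ on $M\setminus\{b'\}$. By definition $a,a',b,b'$ lie in $\Lambda_\Gamma$: for $b$ one uses the inverse sequence, since if $\gamma_n \to a$ away from $b$ then $\gamma_n^{-1}\to b$ away from $a$.

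\textbf{Step 1.} Show that $a,b$ are fixed by $\gamma$. Since $\gamma$ commutes with $\gamma^{n_j}$, we get that $\gamma^{n_j}(\gamma x)=\gamma(\gamma^{n_j}x)$. For $x$ and $\gamma x$ both different from $b$, the left side tends to $a$ and the right side tends to $\gamma a$, so $\gamma a=a$. Applying the same argument to the inverse sequence gives $\gamma b=b$, and similarly for $a',b'$. Next, a standard lemma says that an infinite-order element has at most two fixed points in $M$. Indeed, if $\gamma$ fixed three distinct points $p_1,p_2,p_3$, then each $\gamma^{n_j}$ fixes all three; but $\gamma^{n_j}$ sends at least two of them, which avoid $b$, to $a$, a contradiction. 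So $\mathrm{Fix}(\gamma)$ has one or two points, and $a,b,a',b'$ all lie in it.

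\textbf{Step 2.} Upgrade subsequential convergence to convergence of the full sequence $\gamma^{n}$, $n\to\pm\infty$. Every subsequence of $\{\gamma^n\}_{n>0}$ has a further subsequence converging to some attracting/repelling pair, and that pair lies in $\mathrm{Fix}(\gamma)$. I would show that this pair does not depend on the subsequence. The natural tool is a compact fundamental domain argument. Take $x\notin \mathrm{Fix}(\gamma)$, and assume two subsequences of positive powers had different attracting points $p\neq q$. Then the orbit $\gamma^n x$ accumulates on both $p$ and $q$. Take a small neighbourhood $U$ of $p$ missing $q$. Arbitrarily long forward orbit segments would have to leave $U$ and re-enter it, and the convergence property applied to the "return" elements forces a third fixed point, contradicting Step 1.

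\textbf{Step 3.} This is the case split.
\begin{enumerate}
\item Suppose $\mathrm{Fix}(\gamma)=\{p,q\}$ has two points and the attractors of the forward and backward powers differ. Then $\gamma^{n}\to \gamma^+$ off $\gamma^-$ and $\gamma^{-n}\to\gamma^-$ off $\gamma^+$, so $\gamma$ is loxodromic. Taking the sequence $\gamma^{-n}$ with $x=\gamma^+$ shows that $\gamma^{\pm}$ are conical, since $\gamma^{-n}\gamma^+=\gamma^+$ while $\gamma^{-n}\to\gamma^-\neq\gamma^+$ elsewhere.
\item Suppose $\mathrm{Fix}(\gamma)=\{p\}$. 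Then all attracting and repelling points equal $p$, so $\gamma$ is parabolic.
\item Suppose there are two fixed points but the forward attractor equals the backward attractor. This case must be ruled out; it is a mixed case that Step 2's argument also excludes. If $\gamma^{n}\to p$ off $q$ and $\gamma^{-n}\to p$ off $q$, then for $x\neq p,q$ and large $n$ we get that $x=\gamma^{-n}(\gamma^n x)$ lies near $p$. For this, one takes a compact neighbourhood of $p$ avoiding $q$ and uses local uniformity, which gives a contradiction.
\end{enumerate}

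Finally, in the parabolic case, $p$ is not conical. This uses the standard fact that a fixed point of a parabolic element cannot be conical: a conical sequence $\gamma_n$ for $p$ together with the conjugates $\gamma_n\gamma\gamma_n^{-1}$ would produce an element whose dynamics is loxodromic-like. This is the main obstacle, and in the paper it is presumably cited from Tukia. I expect Step 2 and this non-conicality to be the delicate points, and I would rely on Tukia's lemmas there rather than reprove them.
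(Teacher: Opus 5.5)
The paper gives no proof of this Fact; it quotes it from Tukia, so there is no argument of the paper's to compare with. Your Steps 1 and 3 are sound as an outline, with two small additions. First, when $\mathrm{Fix}(\gamma)=\{p,q\}$, a subsequential attractor and repeller cannot coincide: $a=b=p$ would force $\gamma^{n_j}q=q\to p$. Second, case (3) of Step 3 is automatic once Step 2 holds, by the inverse lemma.

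Step 2, however, does not work by the mechanism you describe. The convergence property applied to powers of $\gamma$ can only return points of $\mathrm{Fix}(\gamma)$; that is exactly Step 1. So no third fixed point can come out of the ``return'' elements. What actually rules out two different forward attractors is continuity of $\gamma$ at $p$:
\begin{enumerate}
\item Take $x\notin\{p,q\}$. Every accumulation point of $\{\gamma^n x\}_{n\geq 0}$ lies in $\{p,q\}$, and two different forward attractors would make this orbit accumulate at both $p$ and $q$.
\item Choose an open $U\ni p$ with $q\notin\overline{U}$. Then there are infinitely many $n_k$ with $\gamma^{n_k}x\in U$ and $\gamma^{n_k+1}x\notin U$.
\item Any limit of $\gamma^{n_k}x$ lies in $\overline{U}\cap\{p,q\}=\{p\}$. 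So along a subsequence $\gamma^{n_k+1}x\to\gamma p=p\in U$, which is a contradiction.
\end{enumerate}
Hence all subsequential limit pairs of $\gamma^{n}$, $n\to+\infty$, agree, the full sequence converges, and the inverse lemma gives the backward dynamics.

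The genuine gap is the claim that the fixed point $p$ of a parabolic $\gamma$ is not conical; your conjugation sketch does not supply it. Suppose $\gamma_n p\to a$ and $\gamma_n\to b$ off $p$ with $a\neq b$, and set $h_n=\gamma_n\gamma\gamma_n^{-1}$.
\begin{itemize}
\item If some value $h$ of $h_n$ repeats infinitely often, you do get a contradiction. Along those $n$, $\gamma_n p$ is the unique fixed point of $h$, so $\gamma_n p=a$. For one such $n_0$, the elements $\gamma_{n_0}^{-1}\gamma_n$ commute with $\gamma$ and converge to $\gamma_{n_0}^{-1}b\neq p$ off $p$. This forces $\gamma$ to fix $\gamma_{n_0}^{-1}b$, contradicting uniqueness of its fixed point.
\item If infinitely many $h_n$ are distinct, the convergence property only yields a subsequence $h_n\to c$ off $d$ with $\{c,d\}=\{a,b\}$. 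To see this, test on $h_n(\gamma_n p)=\gamma_n p$ and on $h_n(\gamma_n z)=\gamma_n\gamma z$ for $z\neq p$. This is not contradictory by itself.
\end{itemize}
Nothing in your outline controls how $\gamma$ displaces points near $p$ at the scale of $\gamma_n$. That control is exactly the content of a separate theorem of Tukia (from his work on conical limit points): parabolic fixed points of a convergence group are never conical. Citing it is legitimate, since the paper cites Tukia for the whole Fact. But then this part of your proposal is a citation, not a proof, and you should present it that way rather than as a consequence of ``loxodromic-like'' dynamics of the conjugates.
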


\subsection{Continuous Cocycles}\label{cont cocycle}
We now define a continuous cocycle associated to $\Gamma$ and $M$. 
\begin{defn}[Continuous Cocycles]
    A function $\sigma: \Gamma \times M \rightarrow \R$ is called a \emph{continuous cocycle} if:
    \begin{enumerate}
        \item For every $\gamma \in \Gamma$, the function $\sigma(\gamma, \cdot)$ is continuous, i.e. for all $x_0 \in M$ $$\limsup_{x \rightarrow x_0} \sigma(\gamma, x)-\sigma(\gamma, x_0) =0;$$
        \item $\sigma$ satisfies the cocycle identity, i.e. for all $g, h \in \Gamma$ and all $x \in M$ $$\sigma(gh, x)-\sigma(g, h.x)-\sigma(h, x)=0$$
    \end{enumerate}   
\end{defn}

Due to a result of Blayac--Canary--Zhang--Zimmer \cite[Proposition 2.3]{2024arXiv240409713B}, we can put a metric $d $ on $\Gamma \sqcup M$ such that $\Gamma$ acts on it also as a convergence group. This metric induces a compactifying topology on $\Gamma \sqcup M$, in particular, $\Gamma$ is open in $\Gamma \sqcup M$ and its closure is $\Gamma \sqcup \Lambda_{\Gamma}$.

\begin{defn} With the compactifying metric $d$ on $\Gamma \sqcup M$, we can define that a continous cocycle $\sigma$ is :
\begin{itemize}
    \item \emph{proper} if for every distinct sequence of loxodromic elements $\{\gamma_n\}$ such that $$\liminf_{n\rightarrow \infty} d(\gamma_n^-, \gamma_n^+) >0,$$ we have $\sigma (\gamma_n, \gamma_n^+) \rightarrow +\infty$.
    \item \emph{expanding} if $\sigma$ is proper, and there exists a $\sigma-$magnitude $||\cdot||_{\sigma}:\Gamma \rightarrow \R$ with the property that for every $\eps >0$, there exists a $C>0$ such that $$||\gamma||_{\sigma}-C \leq \sigma(\gamma, x) \leq ||\gamma||_{\sigma}+C$$ for all $x \in M $ with $d(x, \gamma^{-1}) > \eps.$
\end{itemize}
\end{defn}

There are a few properties of a expanding cocycle that will be helpful later.
\begin{fact}\cite[Proposition 3.3]{2024arXiv240409713B}\label{prop of cocycle}
    Given $\Gamma \subset Homeo(M)$ a convergence group, and $\sigma$ an expanding continuous cocycle.
    \begin{itemize}
        \item For any finite set $F \subset \Gamma,$ there exist $D >0$ such that for all $f \in F$, $\gamma \in \Gamma$, we have $$||\gamma||_{\sigma}-D \leq ||f\gamma||_{\sigma}\leq ||\gamma||_{\sigma}+D$$ and $$||\gamma||_{\sigma}-D \leq ||\gamma f||_{\sigma}\leq ||\gamma||_{\sigma}+D.$$

        \item For any $\eps > 0$ there exists $C > 0$ such that: if $\alpha, \beta \in \Gamma$ and $d(\alpha^{-1},\beta) > \eps$, then 
        $$||\alpha||_{\sigma}+||\beta||_{\sigma}-C \leq ||\alpha\beta||_{\sigma}\leq ||\alpha||_{\sigma}+||\beta||_{\sigma}+C. $$
        \item For any escaping sequence $\{g_n \} \subset \Gamma$, we have $$\lim_{n \rightarrow \infty} ||g_n||_{\sigma} = +\infty.$$
        
        \item If $\{g_n\}$ is a escaping sequence, $\{x_n\} \subset M, $ and $\sigma(g_n, x_n)$ is bounded below, then $$\lim_{n \rightarrow \infty} d(g_n.x_n, g_n)=0.$$
    \end{itemize}
\end{fact}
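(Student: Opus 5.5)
The plan is to derive the four items in order, each from the preceding ones. The only inputs are the expanding estimate $||\gamma||_{\sigma}-C \leq \sigma(\gamma,x)\leq ||\gamma||_{\sigma}+C$ for $d(x,\gamma^{-1})>\eps$, the cocycle identity, and convergence-group dynamics on the compactification $\Gamma\sqcup M$. Two auxiliary facts about the compactification will be used throughout:
\begin{itemize}
\item if $\gamma_n\to a$ and $\gamma_n^{-1}\to b$ in $\Gamma\sqcup M$, then $\gamma_n\to a$ locally uniformly on $M\setminus\{b\}$;
\item the map $\gamma\mapsto\gamma^{-1}$ and left/right multiplication by a fixed element extend continuously to the boundary in the appropriate sense.
\end{itemize}

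\textbf{First item.} Write $||f\gamma||_{\sigma}\approx\sigma(f\gamma,x)=\sigma(f,\gamma x)+\sigma(\gamma,x)$. The term $\sigma(f,\cdot)$ is bounded because $f$ ranges over a finite set and $\sigma(f,\cdot)$ is continuous on compact $M$. It remains to choose $x$ with $d(x,\gamma^{-1})>\eps$ and $d(x,(f\gamma)^{-1})>\eps$ simultaneously. Since $M$ is non-elementary (at least three limit points), for a small uniform $\eps$ some point among three fixed limit points works. For $\gamma f$, write $\sigma(\gamma f,x)=\sigma(\gamma,fx)+\sigma(f,x)$ and choose $x$ avoiding both $(\gamma f)^{-1}=f^{-1}\gamma^{-1}$ and $f^{-1}(\gamma^{-1})$ near the boundary. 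Equicontinuity of the finitely many $f^{\pm1}$ on $\Gamma\sqcup M$ makes this uniform. Finitely many $\gamma$ are handled trivially.

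\textbf{Second item.} Take $x$ with $d(x,\beta^{-1})>\eps'$ and $d(x,(\alpha\beta)^{-1})>\eps'$. Then
$$\sigma(\alpha\beta,x)=\sigma(\alpha,\beta x)+\sigma(\beta,x),$$
so it suffices to show $d(\beta x,\alpha^{-1})$ is bounded below. This is the main obstacle, and I would argue by contradiction. Take sequences $\alpha_n,\beta_n,x_n$ with $d(\alpha_n^{-1},\beta_n)>\eps$ but $d(\beta_nx_n,\alpha_n^{-1})\to0$. If $\beta_n$ is bounded, pass to a constant subsequence and use the first item. Otherwise $\beta_n\to b$ and $\beta_n^{-1}\to b'$, and $x_n$ stays away from $b'$, so $\beta_nx_n\to b$. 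But $\alpha_n^{-1}$ stays $\eps$-away from $\beta_n\to b$, a contradiction. The same contradiction argument is needed to choose $x$ uniformly.

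\textbf{Third item.} Suppose $||g_n||_{\sigma}$ is bounded along an escaping sequence. Pass to a subsequence with $g_n\to a$ and $g_n^{-1}\to b$, and pick a loxodromic $h$ whose fixed points avoid $a$ and $b$. By the second item, $||(g_nh^k)||_{\sigma}\ge ||g_n||_{\sigma}+||h^k||_{\sigma}-C$. Properness gives $\sigma(h^k,h^+)\to\infty$, hence $||h^k||_{\sigma}\to\infty$. I would then compare this with conjugates $g_nh^kg_n^{-1}$ or use $\sigma(g_n,x)$ via the cocycle identity to force growth. An alternative route is to apply properness to the loxodromics $g_nh$, which for large $n$ have fixed points near $a$ and $h^{-1}b$, and then use the second item.

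\textbf{Fourth item.} Suppose $d(g_nx_n,g_n)\not\to0$. Pass to subsequences with $g_n\to a$, $g_n^{-1}\to b$, and $x_n\to x$. If $x\neq b$, then $g_nx_n\to a$ by locally uniform convergence, which is a contradiction. So $x_n\to b$. Now apply the cocycle identity:
$$0=\sigma(g_n^{-1}g_n,x_n)=\sigma(g_n^{-1},g_nx_n)+\sigma(g_n,x_n).$$
Since $g_nx_n$ stays away from $a=\lim (g_n^{-1})^{-1}$, the expanding estimate gives $\sigma(g_n^{-1},g_nx_n)\ge||g_n^{-1}||_{\sigma}-C\to\infty$ by the third item. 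Hence $\sigma(g_n,x_n)\to-\infty$, contradicting the assumption that it is bounded below.
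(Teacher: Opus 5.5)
\textbf{The gap is in the third item}, and because your fourth item invokes it, the gap propagates there too. Your main line never produces a lower bound on $||g_n||_{\sigma}$. The inequality $||g_nh^k||_{\sigma}\ge ||g_n||_{\sigma}+||h^k||_{\sigma}-C$ bounds $||g_nh^k||_{\sigma}$ from below and says nothing about $||g_n||_{\sigma}$. The suggestion to ``compare with conjugates or use the cocycle identity to force growth'' is not an argument. Properness is the only hypothesis that forces growth, and it only concerns $\sigma(\gamma,\gamma^+)$ for distinct loxodromics with separated fixed points. So you must build such loxodromics out of $g_n$ and then transfer the growth back to $||g_n||_{\sigma}$. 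Your one-line ``alternative route'' is the right mechanism, but it is not justified as stated. Requiring the fixed points of $h$ to avoid $a$ and $b$ does not make $g_nh$ eventually loxodromic with separated fixed points; what you need is $h^{-1}b\neq a$.

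To complete it, let $g_n\to a$ and $g_n^{-1}\to b$. Take $h=e$ if $a\neq b$; if $a=b$, use non-elementarity to choose $h$ with $ha\neq a$. Then $g_nh\to a$ and $(g_nh)^{-1}=h^{-1}g_n^{-1}\to h^{-1}b\neq a$. Hence for large $n$ the $g_nh$ are distinct loxodromics with $(g_nh)^+\to a$ and $(g_nh)^-\to h^{-1}b$, and properness gives $\sigma(g_nh,(g_nh)^+)\to+\infty$. Since $d((g_nh)^+,(g_nh)^{-1})\to d(a,h^{-1}b)>0$, the expanding estimate gives $\sigma(g_nh,(g_nh)^+)\le ||g_nh||_{\sigma}+C$, so $||g_nh||_{\sigma}\to+\infty$. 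The first item then gives $||g_n||_{\sigma}\ge ||g_nh||_{\sigma}-D\to+\infty$. This is exactly what a loxodromic-approximation statement like Fact \ref{approx loxo} supplies uniformly.

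The rest of your proposal is fine. The three-point choice of $x$ in the first two items works. In the second item, for $\beta$ in a finite set you should simply cite the first item rather than try to contradict $d(\beta_nx_n,\alpha_n^{-1})\to 0$, which need not fail there. In the fourth item, $d(g_nx_n,g_n)\ge\eta$ is already the hypothesis $d(y,(g_n^{-1})^{-1})>\eta$ that the expanding estimate for $g_n^{-1}$ needs, so the case analysis on $\lim x_n$ is unnecessary.
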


\subsection{GPS System}
We can now define the Gromov--Patterson--Sullivan system associated to the continuous cocycle $\sigma$ and construct a flow space that $\Gamma$ acts on.
\begin{defn}
    Let $M^{(2)}:= \{(x,y)\in M\times M\text{ }|\text{ }x\neq y\}$. For $\sigma$ and $\bar{\sigma}$ two proper cocycle, and $G:M^{(2)} \rightarrow \R$ a locally bounded function, we say $(\sigma, \bar{\sigma}, G)$ is a continuous Gromov--Patterson--Sullivan (GPS) system if they satisfy $$\left[\bar{\sigma}(\gamma, y)+\sigma(\gamma, x)\right]-\left[G(\gamma x, \gamma y)-G(x,y)\right] =0$$ for all $\gamma \in \Gamma$, and distinct $x, y \in M$. 
\end{defn}

Let $(\sigma, \bar{\sigma}, G)$ be a continuous GPS system through the rest of the paper. 

\begin{fact}\cite[Proposition 3.4]{2024arXiv240409713B}\label{3.4} Here are some useful properties of $(\sigma, \bar{\sigma},G)$:
    \begin{itemize}
        \item $\sigma$ and $\bar{\sigma}$ are expanding cocycles.
        \item There exists $E\geq 0$ such that for all $\gamma \in \Gamma$ we have \begin{equation}\label{1}
            ||\gamma^{-1}||_{\bar{\sigma}}-E \leq ||\gamma||_{\sigma}\leq ||\gamma^{-1}||_{\bar{\sigma}}+E.
        \end{equation}
    \end{itemize}
\end{fact}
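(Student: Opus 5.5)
The plan is to get both bullet points from one mechanism. The GPS identity
$$\sigma(\gamma,x)+\bar\sigma(\gamma,y)=G(\gamma x,\gamma y)-G(x,y)$$
lets us trade the value of $\sigma$ at one point for values of $\bar\sigma$ and $G$. Moreover, $G$ is bounded on every set of the form $\{(a,b)\in M^{(2)} : d(a,b)\ge \delta\}$. This set is compact in $M^{(2)}$, and $G$ is locally bounded. So the work reduces to choosing auxiliary points $y$ for which both $(x,y)$ and $(\gamma x,\gamma y)$ are uniformly separated.

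The separation will come from the compactification $\Gamma\sqcup M$ of \cite[Proposition 2.3]{2024arXiv240409713B}. There, for an escaping sequence the convergence property reads uniformly as follows. For every $\eps,\eta>0$, all but finitely many $\gamma$ satisfy
\begin{itemize}
\item $d(\gamma x,\gamma)<\eta$ whenever $d(x,\gamma^{-1})>\eps$;
\item symmetrically, $d(\gamma^{-1}z,\gamma^{-1})<\eta$ whenever $d(z,\gamma)>\eps$.
\end{itemize}
I will argue by contradiction with a sequence, using the convergence property and the fact that $\gamma_n\to a$ in $\Gamma\sqcup M$ identifies the attracting point.

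\textbf{First bullet (expanding).} Properness of $\sigma$ and $\bar\sigma$ is part of the definition, so only a magnitude must be produced. Fix $\eps>0$. Since $\Gamma$ is non-elementary, fix three points of $\Lambda_\Gamma$ that are pairwise at distance at least $3\eta_0$. For each $\gamma$, one of them, call it $z_\gamma$, satisfies $d(z_\gamma,\gamma)\ge \eta_0$. Put $y_\gamma:=\gamma^{-1}z_\gamma$.
\begin{itemize}
\item For all but finitely many $\gamma$, the second uniform fact gives $d(y_\gamma,\gamma^{-1})<\eps/2$. Hence $d(x,y_\gamma)>\eps/2$ for every $x$ with $d(x,\gamma^{-1})>\eps$.
\item The first uniform fact gives $d(\gamma x,\gamma)<\eta_0/2$. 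Hence $d(\gamma x,\gamma y_\gamma)=d(\gamma x,z_\gamma)>\eta_0/2$.
\end{itemize}
For two such points $x,x'$, subtracting the GPS identities at $(x,y_\gamma)$ and $(x',y_\gamma)$ gives
$$|\sigma(\gamma,x)-\sigma(\gamma,x')|\le 4\sup\{|G(a,b)| : d(a,b)\ge \min(\eps,\eta_0)/2\}.$$
So we define $\|\gamma\|_\sigma:=\sigma(\gamma,x_\gamma)$ for any chosen $x_\gamma$. The finitely many exceptional $\gamma$ are absorbed into the constant, since each $\sigma(\gamma,\cdot)$ is continuous on compact $M$. A magnitude defined for one $\eps$ works for all $\eps$ after enlarging $C$. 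The case of $\bar\sigma$ is symmetric: $(\bar\sigma,\sigma,\tilde G)$ with $\tilde G(x,y)=G(y,x)$ is again a GPS system.

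\textbf{Second bullet (inequality \eqref{1}).} Apply the cocycle identity to $\gamma^{-1}\gamma=\mathrm{id}$ to get $\bar\sigma(\gamma,y)=-\bar\sigma(\gamma^{-1},\gamma y)$. The GPS identity then becomes
$$\sigma(\gamma,x)-\bar\sigma(\gamma^{-1},\gamma y)=G(\gamma x,\gamma y)-G(x,y).$$
Take $y=y_\gamma$ as above and any $x$ with $d(x,\gamma^{-1})>\eps$. The right side is uniformly bounded, since both pairs are separated. Also $\gamma y_\gamma=z_\gamma$ satisfies $d(z_\gamma,\gamma)\ge\eta_0$, and $\gamma=(\gamma^{-1})^{-1}$. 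So the expanding estimate for $\bar\sigma$ applied to $\gamma^{-1}$ gives $\bar\sigma(\gamma^{-1},z_\gamma)=\|\gamma^{-1}\|_{\bar\sigma}+O(1)$, while $\sigma(\gamma,x)=\|\gamma\|_\sigma+O(1)$. Combining these yields \eqref{1}, with finitely many exceptions absorbed into $E$.

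\textbf{Main obstacle.} The delicate step is the uniform convergence statement in the compactified metric on $\Gamma\sqcup M$, namely that $\gamma$ maps the complement of the $\eps$-ball about $\gamma^{-1}$ into a small ball about $\gamma$, uniformly in $\gamma$ outside a finite set. I would prove it by contradiction. Take a sequence violating it, pass to a subsequence with $\gamma_n\to a$ and $\gamma_n^{-1}\to b$. Then check, using the defining properties of the topology from \cite{2024arXiv240409713B}, that $\gamma_n|_{M\setminus\{b\}}\to a$ locally uniformly.
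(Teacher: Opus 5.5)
Your proof is correct. The paper gives no argument of its own and imports this statement from \cite[Proposition 3.4]{2024arXiv240409713B}. Your route is, as far as I recall, the same mechanism used there: bound $G$ on uniformly separated pairs, choose $y_\gamma=\gamma^{-1}z_\gamma$ close to $\gamma^{-1}$ with $\gamma y_\gamma$ away from $\gamma$, and read both the base-point independence of $\sigma(\gamma,\cdot)$ and the comparison $\|\gamma\|_\sigma\approx\|\gamma^{-1}\|_{\bar\sigma}$ off the GPS identity.

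The uniform convergence you flag as the main obstacle is exactly what the compactification of \cite[Proposition 2.3]{2024arXiv240409713B} provides. So your sequential argument there goes through as sketched.
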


\subsection{A Flow Space}\label{flow space}

 One can construct a natural flow space that admits a measurable $\Gamma$ action. Let $\Tilde{\Omega}_{\Gamma}:= \Lambda_{\Gamma}^{(2)} \times \R$, where $\Lambda_{\Gamma}^{(2)}$ is the set of pairs of distinct points in $\Lambda_{\Gamma} \times \Lambda_{\Gamma}$. Fixing another expanding cocycle $\sigma'$. There is a natural flow $\Tilde{\psi}^s$ defined on $\Tilde{\Omega}_{\Gamma}$ by translating on the last factor, i.e. $$\Tilde{\psi}^s(x,y, t)= (x, y, t+s),$$ and the $\Gamma$ action on $\Tilde{\Omega}_{\Gamma}$ is defined by $$\gamma.(x,y,t)= \left(\gamma x, \gamma y, t+\sigma'(\gamma, x)\right).$$ Note that the $\Gamma$ action commutes with the flow $\Tilde{\psi}^s$ and hence $\Tilde{\psi}^s$ descends to a flow $\psi^s$ on $\Omega_{\sigma'}:= \Gamma \backslash\tilde{\Omega}_{\Gamma}$.

\begin{fact}\cite[Proposition 7.1]{2024arXiv240409718B}
    The space $M^{(2)} \times \R \supset \tilde{\Omega}_{\Gamma}$ admits a compactification $\overline{M^{(2)} \times \R} = (M^{(2)} \times \R) \sqcup M$. Moreover, a sequence $\{(x_n,y_n,t_n)\} \in \overline{M^{(2)} \times \R} $ converges to a point $z \in M$ if every subsequence contains a further subsequence, which is still denoted by $\{(x_n,y_n,t_n)\} $, that satisfies either \begin{itemize}
        \item $x_n \rightarrow z$ and $t_n \rightarrow \infty$,
        \item $y_n \rightarrow z$ and $t_n \rightarrow -\infty$, or
        \item $x_n \rightarrow z$ and $y_n \rightarrow z$.
    \end{itemize}
\end{fact}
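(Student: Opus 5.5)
We will construct the topology on $\overline{M^{(2)} \times \R} := (M^{(2)} \times \R) \sqcup M$ explicitly. Then we check that it is compact and metrizable, and finally identify its convergent sequences with the three cases in the statement. Points of $M^{(2)} \times \R$ keep their product neighborhoods. For $z \in M$, an open set $U \ni z$ of $M$ and $T>0$, set
$$N(U,T) := U \sqcup \{(x,y,t) : x \in U,\ t > T\} \cup \{(x,y,t) : y \in U,\ t<-T\} \cup \{(x,y,t) : x,y \in U\}.$$
We take as a basis the product open sets of $M^{(2)}\times\R$ together with all the sets $N(U,T)$. Since $M$ is compact metrizable, we may restrict $U$ to a countable basis and $T$ to $\N$. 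This gives a countable basis, and also countable neighborhood bases at every point.

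The first step is to verify the basis axioms, i.e.\ that intersections of basic sets contain basic sets around each common point.
\begin{itemize}
\item For two sets $N(U_1,T_1)$ and $N(U_2,T_2)$ around the same $z$, the set $N(U_1\cap U_2, \max T_i)$ works.
\item For a point $(x_0,y_0,t_0) \in N(U,T)$, we use that $U$ is open. If $x_0 \in U$ and $t_0 > T$, then a small product neighborhood still satisfies $x\in U$ and $t>T$. The other two clauses are handled the same way.
\end{itemize}
Thus $M^{(2)}\times\R$ is an open subspace with its usual topology. It is dense, since $\Lambda$-free points $z\in M$ are approximated by $(z, y, n)$ with $y\ne z$ fixed and $n\to\infty$.

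The second step is Hausdorffness.
\begin{itemize}
\item For distinct $z,w \in M$, take disjoint open $U \ni z$ and $V \ni w$. Then $N(U,T)\cap N(V,T)=\emptyset$ for any $T$. Indeed, a triple lying in both would have to satisfy one clause for $U$ and one for $V$. Each pairing of clauses forces either $x\in U\cap V$, or $y \in U\cap V$, or both $t>T$ and $t<-T$.
\item For $z \in M$ and $(x_0,y_0,t_0)$, choose $T > |t_0|+1$. Choose an open $U\ni z$ whose closure omits whichever of $x_0,y_0$ differs from $z$; at most one of them equals $z$. Then a small product neighborhood of $(x_0,y_0,t_0)$ has $|t|<T$, and either $x\notin U$ or $y \notin U$. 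Hence it is disjoint from $N(U,T)$.
\end{itemize}

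The third step, which I expect to carry the main content, is sequential compactness. Take a sequence in the space. If infinitely many terms lie in $M$, we use compactness of $M$. Note that the subspace topology on $M$ is the original one, because $N(U,T)\cap M = U$. Otherwise, for triples $(x_n,y_n,t_n)$, pass to a subsequence with $x_n\to x$, $y_n \to y$ in $M$ and $t_n \to t \in [-\infty,+\infty]$. There are four cases.
\begin{itemize}
\item If $x\neq y$ and $t$ is finite, the sequence converges in $M^{(2)}\times\R$.
\item If $x=y$, the sequence lies eventually in every $N(U,T)$ around $x$ by the third clause.
\item If $t=+\infty$, it converges to $x$ by the first clause.
\item If $t=-\infty$, it converges to $y$ by the second clause.
\end{itemize}
Conversely, suppose $(x_n,y_n,t_n)\to z$. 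Then eventually every subsequence lies in $N(U_k,k)$ for a shrinking basis $U_k$. A diagonal argument then shows that some clause holds infinitely often along each subsequence, for every $k$. This yields exactly one of the three alternatives in the statement. Together with the subsequence principle, this gives the characterization of convergence.

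Finally, second countability and sequential compactness give compactness. Compact Hausdorff implies regular, and then Urysohn metrization applies, so the space is metrizable. The subtle point is the converse direction of convergence: the clauses must be extracted uniformly in $k$ along a further subsequence. This is handled by choosing, for each $k$, a clause valid for infinitely many $n$ and then passing to a diagonal subsequence on which a single clause persists.
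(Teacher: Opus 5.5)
Your proof is correct. The paper gives no argument for this fact. It imports it as Proposition 7.1 of Blayac--Canary--Zhu--Zimmer, and in Section 4 it simply refers to their Section 7.1 for the basis of the topology. Your neighborhood basis $N(U,T)$ has three clauses that encode exactly the three modes of convergence. Together with your checks, this gives a complete, self-contained proof of the cited result. Those checks are the basis axioms, Hausdorffness, compactness from sequential compactness plus second countability, and metrizability via Urysohn.

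There are a few small things to tighten.
\begin{itemize}
\item The three alternatives are not mutually exclusive; for example, $x_n, y_n \to z$ with $t_n \to +\infty$ satisfies two of them. So ``exactly one'' should read ``at least one''.
\item The converse direction is cleaner without the diagonal argument. Given any subsequence, pass to a further one with $x_n \to x$, $y_n \to y$ and $t_n \to t \in [-\infty,+\infty]$. Your own case analysis shows that this sub-subsequence converges:
\begin{itemize}
\item to $(x,y,t)$ if $x \neq y$ and $t$ is finite;
\item to $x$ if $x = y$ or $t = +\infty$;
\item to $y$ if $t = -\infty$.
\end{itemize}
Uniqueness of limits in the Hausdorff space then forces this limit to be $z$. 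This excludes the case $x \neq y$ with $t$ finite, and yields one of the three alternatives in each remaining case.
\item If you keep the diagonal argument, make two points explicit. Some clause is selected for infinitely many $k$. Because the $U_k$ are nested and the thresholds increase, that clause then holds for infinitely many $n$ at every level $k$, which is what makes the diagonal extraction work.
\item ``$\Lambda$-free points $z \in M$'' is presumably a slip for ``any point $z \in M$''.
\end{itemize}
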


\begin{rmk}
    With this topology, the points $\tilde{\psi}^s(v)$ along a flow line defined by $v=(v^+,v^-,t_v)$ converges to $v^{\pm}$ if $s \rightarrow \pm \infty.$ We will use $\tilde{\psi}^{\pm \infty}(v)$ to denote its limit points for the rest of the paper. 
\end{rmk}

In the classical hyperbolic space setting (or negatively curved spaces in general), this flow space can be understood as the unit tangent bundle of the space, and the one dimensional flow as the geodesic flow. The hyperbolicity of the underlying space in those cases gives rise to nice properties such as geodesics with close endpoints are close to each other in the middle. Even though the flow space $\tilde{\Omega}_{\Gamma}$ defined above admits a compactifying topology (and hence metrizable), it is not Gromov hyperbolic in general. Nevertheless we can still show it has similar properties .

Let $K \subset int(K')$ be two compact sets in $\tilde{\Omega}_{\Gamma}$ such that for all $g \in \Gamma$, there exist $v \in K$ such that the flow line defined by $v $ intersects $g.K$ (we will give an explicit construction of such sets in Section \ref{Gamma_K}. For any compact set $W \subset \tilde{\Omega}_{\Gamma}$, define $$t_{g, W}:= \inf \{t \in [0, \infty) \text{ }|\text{ } \exists v \in W \text{ with } \Tilde{\psi}^t(v) \in g.W \text{ and } \Tilde{\psi}^{[0,t]} (v)\cap \Gamma.W \subset W \cup g.W\}.$$

\begin{lemma}\label{disjoint flow lines}
 There exist $L>0$ and $R>0$ such that if $t_{g, K'} \geq L$, and $v \in K $ such that $\Tilde{\psi}^{t_v}(v) \in g.K$ for some $t_v >0$, we have $$\Tilde{\psi}^{[R, t_v-R]} (v)\cap \Gamma.K = \emptyset.$$ 
\end{lemma}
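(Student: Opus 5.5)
The plan is to argue by contradiction and compactness, using three facts: $\Gamma$ acts properly on $\tilde{\Omega}_{\Gamma}$, $K\subset \mathrm{int}(K')$, and the definition of $t_{g,K'}$ only counts segments that meet $\Gamma.K'$ inside $K'\cup g.K'$. Suppose the lemma fails. Then for every pair $L_n=R_n=n$ there are $g_n\in\Gamma$ with $t_{g_n,K'}\ge n$, points $v_n\in K$ and times $t_n>0$ with $\tilde{\psi}^{t_n}(v_n)\in g_n.K$, and times $s_n\in[n,t_n-n]$ and elements $h_n\in\Gamma$ with $\tilde{\psi}^{s_n}(v_n)\in h_n.K$. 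Replacing $s_n$ by a suitable visit, I may assume $s_n$ is, among visits to $\Gamma.K$ in $[n,t_n-n]$, the first one. I expect this choice to be inessential.

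\emph{Step 1: uniform return bound.} Since $K$ is compact and $\Gamma$ acts properly, only finitely many $h$ satisfy $h.K'\cap\tilde{\psi}^{[-1,1]}(K')\ne\emptyset$. From this and continuity of the flow I want $\rho>0$ such that a flow line entering some $h.K$ stays in $h.K'$ for a time at least $\rho$ on each side. I also want each visit of a flow line to $\Gamma.K$ to be a visit to $\Gamma.K'$ of length bounded below by $\rho$. This uses $K\subset\mathrm{int}(K')$: the distance from $K$ to $\partial K'$ is positive and uniform on the compact set $K$.

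\emph{Step 2: chop the segment.} Look at the successive maximal intervals $I_0,I_1,\dots,I_m\subset[0,t_n]$ where $\tilde{\psi}^{s}(v_n)\in\Gamma.K'$. By Step 1 and properness, each $I_j$ lies in a single translate $k_j.K'$, with $k_0=e$ and $k_m=g_n$. The segment between $I_j$ and $I_{j+1}$ is then admissible for $t_{k_j^{-1}k_{j+1},K'}$ after translating by $k_j^{-1}$. The middle hit at time $s_n$ means some $k_j=h_n\ne e,g_n$ occurs. The point is to contradict $t_{g_n,K'}\ge n$ with the whole segment. The natural move is to pass to the cut: taking the last interval $I_j$ before $s_n$ which lies in $K'$ and the first one after $s_n$ in $g_n.K'$ still passes through $h_n.K'$. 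So the contradiction must instead come from the assumption that such $v_n$ exist at all, which I would get by a limiting argument.

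\emph{Step 3: limit.} Translate by $h_n^{-1}$: set $w_n:=h_n^{-1}\tilde{\psi}^{s_n}(v_n)\in K$. The backward ray from $w_n$ meets $h_n^{-1}.K$ after time $\ge n$, and the forward ray meets $h_n^{-1}g_n.K$ after time $\ge n$. Pass to a limit $w_n\to w\in K$. Using the compactification of $M^{(2)}\times\R$ (Fact, \cite[Proposition 7.1]{2024arXiv240409718B}), $h_n^{-1}\to w^-$ and $h_n^{-1}g_n\to w^+$ in $\Gamma\sqcup M$. By the second item of Fact~\ref{prop of cocycle}, with $\alpha=h_n$, $\beta=h_n^{-1}g_n$ and $w^+\ne w^-$, this gives $\|g_n\|_\sigma\approx\|h_n\|_\sigma+\|h_n^{-1}g_n\|_\sigma$. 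So $g_n$ factors through $h_n$ almost geodesically. I then want to exhibit a competitor segment from $K'$ to $g_n.K'$, witnessing $t_{g_n,K'}$ and of length $<n$, by combining the first-exit structure of Step 2 with this near-additivity.

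\emph{Main obstacle.} The hardest step is exactly this last one: converting a long trajectory with a middle $\Gamma.K$-visit into a short admissible trajectory for $g_n$, or otherwise contradicting $t_{g_n,K'}\ge n$. As stated, the definition of $t_{g,K'}$ is an infimum over segments that avoid other translates. My first attempt would be to prove that $t_{g,K'}$ is comparable, up to a constant, to $\|g\|_\sigma$ whenever an admissible segment exists, and to show that the long segment forces admissibility after shrinking to the relevant subsegment. Failing that, I would fall back on showing the lemma's conclusion with $R$ depending only on $K,K'$, via the Step 1 return-time bound, for any visits occurring away from the ends.
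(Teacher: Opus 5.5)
Your proposal has a genuine gap. Steps 1 and 2 are never used, and Step 3 stops exactly where the contradiction has to be produced.

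The contradiction you aim for cannot exist. You want an admissible segment from $K'$ to $g_n.K'$ of length $<n$. But for all but finitely many $g$, every flow segment from $K'$ to $g.K'$ has length within a uniform constant of $\|g\|_{\sigma'}$. This is the estimate in the proof of Proposition~\ref{est of magnitude}, via Lemma~\ref{geod ball and shadows} and Fact~\ref{properties of shadows}. Your segment from $v_n$ already has length $t_n\geq 2n$, so every competitor has length at least $2n-O(1)$.

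The near-additivity $\|g_n\|_\sigma\approx\|h_n\|_\sigma+\|h_n^{-1}g_n\|_\sigma$ is true but yields no contradiction. The fallback (an $R$ depending only on $K,K'$ from local return times) cannot work either. It ignores the hypothesis on $g$, and without that hypothesis the conclusion is false: a flow line can run from $K$ to $g.K$ and pass through another translate $h.K$ halfway, as happens along axes of loxodromic elements. Also, in Step 2 a maximal interval in $\Gamma.K'$ need not lie in a single translate, since translates of $K'$ may overlap.

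The missing idea is that the hypothesis enters only through the existence of an admissible witness for $g_n\in\Gamma_{K'}$. The contradiction is with that witness's avoidance property, not with its length. Choose $u_n\in K'$ and $T_n\geq 0$ with $\tilde{\psi}^{T_n}(u_n)\in g_n.K'$ and $\tilde{\psi}^{[0,T_n]}(u_n)\cap\Gamma.K'\subset K'\cup g_n.K'$. Then apply your own Step 3 recentering to this segment rather than to norms:

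\begin{enumerate}
\item By the convergence action on the compactification \cite[Proposition 7.2]{2024arXiv240409718B}, the sets $h_n^{-1}.K'$ and $h_n^{-1}g_n.K'$ converge to $w^-$ and $w^+$ respectively. Hence $h_n^{-1}u_n\to w^-$ and $h_n^{-1}\tilde{\psi}^{T_n}(u_n)\to w^+$.
\item The description of convergence to boundary points in \cite[Proposition 7.1]{2024arXiv240409718B}, together with $w^+\neq w^-$, forces three things. The endpoints of the recentered flow line converge to $(w^+,w^-)$, its initial time tends to $-\infty$, and its final time tends to $+\infty$.
\item So the recentered segment passes arbitrarily close to $w\in K\subset\mathrm{int}(K')$. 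Meanwhile $h_n^{-1}.K'$ and $h_n^{-1}g_n.K'$ eventually leave a fixed neighborhood of $w$.
\item Undoing the recentering, $\tilde{\psi}^{[0,T_n]}(u_n)$ meets $h_n.K'$ at a point outside $K'\cup g_n.K'$. This contradicts admissibility.
\end{enumerate}

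This is the paper's argument. Note that the quantitative bound $t_{g,K'}\geq L$ is not what drives it; only the existence of the witness is used.
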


\begin{proof}
We prove the lemma by contradiction. Assume the statement fails, then there exist $v_n=(v_n^+,v_n^-,t_n) \in K, $ $g_n, \gamma_n \in \Gamma,$ and $t_n>t_{g_n,K'}, s_n >0$ such that $t_{g_n, K'} \geq n$, $\Tilde{\psi}^{t_n}(v_n) \in g_n.K$, $s_n \in [\frac{n}{2}, t_{v_n}-\frac{n}{2}]$, and $\Tilde{\psi}^{s_n}(v_n) \in \gamma_n.K$

By definition of $t_{g_n,K'}$, we can also find $w_n \in K'$ such that $$\Tilde{\psi}^{[0, t_{g_n, K'}]} (w_n)\cap \Gamma.K' \subset K' \cup g_n.K'.$$ 
Re-centering everything by $\gamma_n^{-1}$ we have $$\gamma_n^{-1}\left(\Tilde{\psi}^{s_n}(v_n)\right) \rightarrow v =(v^+,v^-,s_v) \in K,$$ $$\gamma_n^{-1}(v_n) \in \gamma_n^{-1}.K \subset \gamma_n^{-1}.K' ,$$ $$\gamma_n^{-1}\left(\Tilde{\psi}^{t_{v_n}}(v_n)\right) \in \gamma_n^{-1}g_n.K \subset \gamma_n^{-1}g_n.K'  .$$

By construction, $\gamma_n^{-1}.v_n^+ \rightarrow v^+$ and $\gamma_n^{-1}.v_n^- \rightarrow v^-$ and $\sigma'(\gamma_n^{-1},v_n^+)$ is bounded away from $-\frac{n}{2}.$ Then the above fact implies that $\gamma_n^{-1}(v_n)\rightarrow v^-$ and $\gamma_n^{-1}\left(\Tilde{\psi}^{t_{v_n}}(v_n)\right)  \rightarrow v^+$.  In particular, this implies $\gamma_n^{-1}w_n \in \gamma_n^{-1}.K' \rightarrow v^-$ and $\gamma_n^{-1}\Tilde{\psi}^{t_{g_n, K'}}(w_n) \in \gamma_n^{-1}g_n.K' \rightarrow v^+$ as $\Gamma$ also acts on $\overline{M^{(2)}\times \R}$ as a convergence group \cite[Proposition 7.2]{2024arXiv240409718B}. By construction $g_n \in \Gamma_{K'}$, this implies the flow line $(v^+,v^-,\R)$ does not intersect the interior of $K'$. However, there is a contradiction as $v \in (v^+,v^-,\R) \cap K.$
\end{proof}

 \begin{cor}\label{flow lines with same endpoint are close}
     There exists $L'$ such that if there exist $v \in K'$, $g \in \Gamma$, and $t_g >L$ satisfying $\tilde{\psi}^{t_g}(v) \in g.K$, then for all $v' \in K'$ with $\tilde{\psi}^{\infty}(v') = \tilde{\psi}^{\infty}(v)$, we have $\tilde{\psi}^s(v') \cap g.K' \neq \emptyset$ for some $s >0$.
 \end{cor}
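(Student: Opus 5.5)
Argue by contradiction, following the compactness scheme of Lemma \ref{disjoint flow lines} and re-centering at the far end of the flow line. Suppose no such $L'$ exists. Then for each $n$ there are $v_n \in K'$, $g_n\in\Gamma$ and $t_n > n$ (we read the hypothesis threshold as $L'$ and let $L'=n\to\infty$) with $\tilde{\psi}^{t_n}(v_n)\in g_n.K$. There are also $v_n'\in K'$ with $\tilde{\psi}^{\infty}(v_n')=\tilde{\psi}^{\infty}(v_n)$, i.e. $v_n'^+=v_n^+$, such that the forward ray $\tilde{\psi}^{(0,\infty)}(v_n')$ never meets $g_n.K'$.

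Apply $g_n^{-1}$. Then $w_n:=g_n^{-1}\tilde{\psi}^{t_n}(v_n)\in K$, so after passing to a subsequence $w_n\to w=(w^+,w^-,s_w)\in K$. Since $t_n\to\infty$, the sequence $g_n$ is escaping: the flow line of $v_n$ meets $K'$ and $g_n.K$ at times differing by $t_n$, and the time coordinates on the compact sets $K',K$ are bounded. Moreover $g_n^{-1}v_n = \tilde{\psi}^{-t_n}(w_n)$, and this converges to $w^-$ in the compactification $\overline{M^{(2)}\times\R}$, by the Fact on the compactification (first coordinates converge and $t\to-\infty$ in the appropriate coordinate). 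Because $g_n^{-1}.K'\ni g_n^{-1}v_n$ and $\Gamma$ acts on $\overline{M^{(2)}\times\R}$ as a convergence group \cite[Proposition 7.2]{2024arXiv240409718B}, after passing to a subsequence $g_n^{-1}$ converges to $w^-$ locally uniformly off a single point. The compact set $K'$ has diameter bounded away from collapsing, so the whole set $g_n^{-1}.K'$ shrinks to $w^-$. In particular $g_n^{-1}v_n'\to w^-$.

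Now $g_n^{-1}v_n'$ has forward endpoint $g_n^{-1}v_n^+ = w_n^+\to w^+$, and it lies on a flow line whose base point converges to $w^-$. Write $g_n^{-1}v_n'=(w_n^+, u_n^-, r_n)$. Convergence to $w^-\neq w^+$ forces, by the three cases in the compactification Fact, $u_n^-\to w^-$ and $r_n\to-\infty$. Hence the flow lines $(w_n^+,u_n^-,\R)$ converge to the flow line $(w^+,w^-,\R)$ through $w\in K$. Since $K\subset \mathrm{int}(K')$, for large $n$ we get a point $\tilde{\psi}^{s_n}(g_n^{-1}v_n')$ lying in $K'$, where $s_n := s_w - r_n + O(1)$ is positive for large $n$ because $r_n\to-\infty$. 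Translating back by $g_n$ gives $\tilde{\psi}^{s_n}(v_n')\in g_n.K'$ with $s_n>0$, which contradicts the choice of $v_n'$.

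The main obstacle is the step asserting that all of $g_n^{-1}.K'$ collapses to $w^-$, and in particular that the repelling point of $g_n^{-1}$ does not lie in $K'$. This is handled as in Lemma \ref{disjoint flow lines}, using that $K'$ is a compact subset of $M^{(2)}\times\R$, away from the boundary $M$, and that $\sigma'(g_n^{-1},\cdot)$ tends to $-\infty$ near $v_n^{\pm}$ (Fact \ref{prop of cocycle}). A secondary point is bookkeeping the time parameter under the $\Gamma$-action, so that the intersection time $s_n$ is genuinely positive.
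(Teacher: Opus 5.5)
Your proposal is correct and follows essentially the same route as the paper's proof: argue by contradiction, pull back by $g_n^{-1}$, use the convergence action on $\overline{M^{(2)}\times\R}$ to collapse $g_n^{-1}.K'$ onto the backward endpoint $w^-$, and conclude that the pulled-back flow line of $v_n'$ converges to the flow line through $w\in K\subset \mathrm{int}(K')$, so it must meet $K'$. Your version is more explicit than the paper's on two points the paper leaves implicit: that the repelling point of $g_n^{-1}$ lies in $M$ (this is cleanest to justify via proper discontinuity of the action on $\tilde{\Omega}_{\Gamma}$ rather than via $\sigma'$), and that the hitting time $s_w-r_n$ is positive.
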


\begin{proof}
    The proof idea is similar to that of Lemma \ref{disjoint flow lines}. If the lemma fails, then there exist $v_n , v_n' \in K'$, $g_n \in \Gamma$, and $t_n \rightarrow \infty$ such that $$\tilde{\psi}^{t_n}(v_n) \in g_n.K \text{, } \tilde{\psi}^{[0,\infty)}(v_n') \cap g_n.K' = \emptyset \text{, and } \tilde{\psi}^{\infty}(v_n)= \tilde{\psi}^{\infty}(v_n') =: x_n.$$
Pulling everything back by $g_n^{-1}$, we have $$g_n^{-1}\tilde{\psi}^{t_n}(v_n) \rightarrow v \in K, \text{ } g_n^{-1} x_n \rightarrow x^+ \in \Lambda_{\Gamma}.$$ Moreover, since $t_n \rightarrow \infty$, we have $g_n^{-1}.K' \rightarrow x^- \in \Lambda_{\Gamma}$ as argued before, hence $g_n^{-1}v_n$ and $g_n^{-1}(v_n')$ both converge to $x^-.$ The flow lines $g_n^{-1}\tilde{\psi}^{[0,\infty)}(v_n)$ converge to $\tilde{\psi}^{[0,\infty)}(v)$, implying $\tilde{\psi}^{\pm \infty}(v) = x^{\pm}$. This gives a contradiction. as $g_n^{-1}\tilde{\psi}^{[0,\infty)}(v_n')$ also converges to $\tilde{\psi}^{[0,\infty)}(v)$ but it cannot intersect $int(K').$
\end{proof}

\subsection{Patterson--Sullivan Measure and Bowen--Margulis--Sullivan Measure}\label{ps and bms measure}

One can construct the $\sigma-$Poincar\'e series $$Q_{\Gamma,\sigma}(s):= \sum_{\gamma \in \Gamma} \exp\left(-s\cdot ||\gamma||_{\sigma}\right),$$ and define the $\sigma-$critical exponent $\delta_{\sigma} := \inf \{s > 0 \text{ }|\text{ } Q_{\Gamma,\sigma}(s) < +\infty\}$. Note that Equation \ref{1} implies $\delta_{\bar{\sigma}} = \delta_{\sigma}$ for a GPS system $(\sigma,\bar{\sigma},G)$. To abuse notation, we will use $ \delta_{\sigma}$ to denote the critical exponent associated to both $\sigma$ and $\bar{\sigma}$ for the rest of the paper.

A probability measure $\mu$ on $M$ is a $\sigma-$Patterson--Sullivan measure of dimension $s$ if for every $\gamma \in \Gamma,$ the measures $\gamma_*\mu$ and $\mu $ are absolutely continuous and their Radon derivative satisfies $$ \frac{d(\gamma_*\mu)}{d\mu}(x) =e^{-s\cdot \sigma(\gamma^{-1}, x)} $$ for all $x \in M.$

Theorem 4.1 in \cite{2024arXiv240409713B} shows that if $\sigma$ is an expanding cocycle and $\delta_{\sigma}<+\infty$, then there exist a $\sigma-$Patterson--Sullivan measure $\mu$ of dimension $\delta_{\sigma}$. Moreover, $\mu$ is supported on the limit set $\Lambda_{\Gamma}$.

 We can construct a measure $\tilde{m}_{\sigma}$ on the flow space $\tilde{\Omega}_{\Gamma}$ defined above: $$\Tilde{m}_{\sigma}(x,y,t)= \exp{(s \cdot G(x,y))} d\mu(x) \otimes d\bar{\mu}(y) \otimes dt$$ where $\mu$ and $\bar{\mu}$ are the Patterson--Sullivan measure of dimension $s$ associated to $\sigma$ and $\bar{\sigma}$ respectively. This measure $\Tilde{m}_{\sigma}$ is both flow invariant and $\Gamma$ invariant, and Proposition 10.2 in \cite{2024arXiv240409713B} implies that $\Gamma$ acts properly discontinuously on $\Tilde{\Omega}_{\Gamma}$, hence it descends to a flow invariant measure $m_{\sigma} $ on the quotient space $\Omega_{\sigma'}$. We refer to this as the Bowen--Margulis--Sullivan (BMS) measure on $\Omega_{\sigma'}.$

Blayac--Canary--Zhang--Zimmer proved in \cite{2024arXiv240409713B} a Hopf--Tsuji--Sullivan dichotomy for the GPS system $(\sigma, \bar{\sigma}, G).$

\begin{thm}\cite[Theorem 1.8 \& 11.2]{2024arXiv240409713B}\label{dichotomy}
    Suppose $(\sigma, \bar{\sigma}, G)$ is a continuous GPS system with $\delta <+\infty$. Let $\mu$ and $\bar{\mu}$ be Patterson--Sullivan measures of dimension $s$ defined as before, and let $\nu := \exp{(s\cdot G)}\mu \otimes \bar{\mu}$ be a locally finite $\Gamma$ invariant measure on $M^{(2)}$. Then the following dichotomy holds:
    \begin{enumerate}
        \item If $Q_{\Gamma,\sigma}(s)=\infty$, then:
        \begin{itemize}
            \item $s= \delta_{\sigma}$;
            \item $\mu(\Lambda_{\Gamma}^{con})= \bar{\mu}(\Lambda_{\Gamma}^{con}) = 1$;
            \item The $\Gamma$ action on $(M^{(2)}, \nu)$ is ergodic and conservative. 
        \end{itemize}
        
        \item If $Q_{\Gamma,\sigma}(s)<+\infty$, then:
        \begin{itemize}
            \item $s \geq \delta_{\sigma}$;
            \item $\mu(\Lambda_{\Gamma}^{con})= \bar{\mu}(\Lambda_{\Gamma}^{con}) = 0$;
            \item The $\Gamma$ action on $(M^{(2)}, \nu)$ is non-ergodic and dissipative. 
        \end{itemize}
    \end{enumerate}

    Moreover, in the first case where the Poincar\'e series $Q_{\Gamma,\sigma}$ diverges at $\delta_{\sigma}$, the measures $\mu$ and $\bar{\mu}$, and consequently $\Tilde{m}_{\sigma}$, are unique, and the flow $\psi^s$ on $(\Omega_{\sigma'}, m_{\sigma})$ is ergodic and conservative.
\end{thm}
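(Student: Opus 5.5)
The plan is to follow the classical Sullivan/Roblin scheme, using shadows in $M$ in place of geodesic shadows. We use Fact~\ref{prop of cocycle} and Fact~\ref{3.4} to control $\sigma$ and $\bar\sigma$.

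\textbf{Shadow lemma.} For $\gamma\in\Gamma$ and $\eps>0$, define the shadow
$$\mathcal{O}_\eps(\gamma):=\gamma\{x\in M : d(x,\gamma^{-1})>\eps\}.$$
The first step is to show that for $\eps$ small there is $C>0$ with
$$C^{-1}e^{-s\|\gamma\|_\sigma}\le \mu(\mathcal{O}_\eps(\gamma))\le C e^{-s\|\gamma\|_\sigma}$$
for all $\gamma\in\Gamma$. Write $\mu(\gamma A)=\int_A e^{-s\sigma(\gamma,x)}\,d\mu(x)$. The expanding property gives $\sigma(\gamma,x)=\|\gamma\|_\sigma\pm C$ on $A=\{d(x,\gamma^{-1})>\eps\}$. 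It then remains to bound $\mu(A)$ below uniformly. For that we use that $\mu$ has no atoms of full mass, which holds since $\Gamma$ is non-elementary and $\mu$ is supported on $\Lambda_\Gamma$; this is the standard compactness argument. The same argument applies to $\bar\mu$ with $\bar\sigma$.

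\textbf{Conical points and the convergent case.} Next we show that $\Lambda^{con}_\Gamma$ equals, up to the choice of $\eps$, the limsup set $\limsup_{\gamma}\mathcal{O}_\eps(\gamma)$. One inclusion unwinds the definition of conical point via the convergence property. The other uses the last item of Fact~\ref{prop of cocycle}. If $Q_{\Gamma,\sigma}(s)<\infty$, Borel--Cantelli with the shadow lemma gives $\mu(\Lambda^{con}_\Gamma)=0$, and similarly $\bar\mu(\Lambda^{con}_\Gamma)=0$. Also $s\ge\delta_\sigma$ follows from the upper shadow bound: the shadows of elements of each magnitude annulus have bounded multiplicity, so their total measure is at most a fixed multiple of $\mu(M)=1$.

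Dissipativity of $\nu$ should then follow from a Hopf-type dictionary. A pair $(x,y)$ is in the conservative part exactly when $\{\gamma : \gamma(x,y)\in W\}$ is infinite for some compact $W\subset M^{(2)}$, which forces $x,y$ to be conical. So $\nu$-a.e.\ pair has finite return set, and this gives non-ergodicity too, since a dissipative infinite measure with a wandering set is not ergodic.

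\textbf{Divergent case.} If $Q_{\Gamma,\sigma}(s)=\infty$, then $s\le\delta_\sigma$, and the shadow lemma sum bound gives $s\ge\delta_\sigma$, so $s=\delta_\sigma$. The key estimate is quasi-independence:
$$\mu(\mathcal{O}(\alpha)\cap\mathcal{O}(\beta))\lesssim e^{-s\|\alpha\|_\sigma}e^{-s\|\alpha^{-1}\beta\|_\sigma}$$
whenever the shadows meet. To get it, use the almost-additivity in Fact~\ref{prop of cocycle} to see that intersecting shadows force $\|\beta\|_\sigma\approx\|\alpha\|_\sigma+\|\alpha^{-1}\beta\|_\sigma$. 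Summing over $T$-annuli then yields the Kochen--Stone bound
$$\sum_{\alpha,\beta}\mu(\mathcal{O}_\alpha\cap\mathcal{O}_\beta)\lesssim\Bigl(\sum_\gamma \mu(\mathcal{O}_\gamma)\Bigr)^2.$$
Hence $\mu(\limsup\mathcal{O})>0$. A $\Gamma$-invariance and ergodicity bootstrap, applied to the invariant set of conical points, upgrades this to full measure.

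Conservativity follows from the Hopf dictionary above. Ergodicity of $\nu$, and of the flow on $(\Omega_{\sigma'},m_\sigma)$, then comes from a Hopf argument: Birkhoff averages of continuous compactly supported functions are constant along weak-stable and weak-unstable leaves. This uses that flow lines with the same forward endpoint become close, in the sense of Corollary~\ref{flow lines with same endpoint are close}, which replaces hyperbolicity.

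Uniqueness of $\mu$ follows because any two PS measures of dimension $\delta_\sigma$ are mutually absolutely continuous via the shadow lemma. Their Radon--Nikodym derivative is then $\Gamma$-invariant, hence constant by ergodicity.

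The main obstacle is quasi-independence together with the Hopf argument. In the absence of Gromov hyperbolicity, everything must be extracted from the convergence property and the almost-additivity estimates. I would first try to prove both through compactness-by-contradiction arguments in the style of Lemma~\ref{disjoint flow lines}.
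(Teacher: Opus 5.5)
The paper does not prove this theorem; it is quoted from Blayac--Canary--Zhu--Zimmer, so there is no internal proof to compare with. Judged on its own, your outline is the classical Sullivan--Roblin scheme. The pieces you only sketch can be extracted from Fact~\ref{prop of cocycle} by compactness arguments: the shadow lemma, bounded multiplicity of shadows in magnitude annuli, and $\|\beta\|_\sigma\approx\|\alpha\|_\sigma+\|\alpha^{-1}\beta\|_\sigma$ for intersecting shadows.

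\textbf{The genuine gap: the divergent case is circular as ordered.} Kochen--Stone gives only $\mu(\Lambda^{con}_\Gamma)>0$, and you upgrade this to $1$ by ``ergodicity''. But your only source of ergodicity is the later Hopf argument. That argument needs conservativity, and you derive conservativity from $\mu(\Lambda^{con}_\Gamma)=1$.

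No ergodicity is needed at this step. The set $A:=M\smallsetminus\Lambda^{con}_\Gamma$ is $\Gamma$-invariant, so if $\mu(A)>0$ then $\mu(A)^{-1}\mu|_A$ is again a $\sigma$-Patterson--Sullivan measure of dimension $s$. It is not a Dirac mass, since a non-elementary convergence group has no global fixed point. So your shadow lemma and Kochen--Stone argument apply to it verbatim and give it positive mass on $\Lambda^{con}_\Gamma$, which is absurd. The same works for $\bar\mu$, since by Fact~\ref{3.4} the $\bar\sigma$-series also diverges at $s$.

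\textbf{Other steps that fail as written.}
\begin{itemize}
\item \emph{Uniqueness.} The shadow lemma alone does not make two Patterson--Sullivan measures mutually absolutely continuous. That needs a Vitali-type covering property for the shadows $S_\eps(\gamma)$, which you have not established. It is cheaper to argue as follows. The average $\frac12(\mu_1+\mu_2)$ is again a Patterson--Sullivan measure of dimension $\delta_\sigma$. The derivative $d\mu_1/d(\frac12(\mu_1+\mu_2))$ is $\Gamma$-invariant, because all three measures share the Radon--Nikodym cocycle $e^{-\delta_\sigma\sigma(\gamma^{-1},\cdot)}$. Ergodicity, applied to the pair $\frac12(\mu_1+\mu_2)$ and $\bar\mu$, then makes it constant.
\item \emph{Non-ergodicity in the convergent case.} A dissipative action can be ergodic, e.g.\ counting measure on a single orbit. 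You must also exclude that $\nu$ is carried by one orbit $\Gamma(x,y)$. That would force $Stab_\Gamma(x)\cdot y\supseteq\Gamma y\smallsetminus\{x\}$. This is impossible for non-elementary $\Gamma$: point stabilizers are elementary, so their orbits accumulate on at most two points.
\item \emph{The Hopf argument.} The measure $m_\sigma$ may well be infinite; deciding this is the point of the paper. Birkhoff averages then tend to $0$, so you need Hopf's ratio ergodic theorem.
\item \emph{Only one side of the Hopf argument is covered.} The mechanism of Corollary~\ref{flow lines with same endpoint are close} gives only \emph{forward} asymptoticity. It works because the time shift $\sigma'(\gamma,\cdot)$ is evaluated at the common forward endpoint. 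On weak-unstable leaves, the shift difference $\sigma'(\gamma_n,w^+)-\sigma'(\gamma_n,v^+)$ is only known to be bounded for an arbitrary expanding $\sigma'$, not convergent.
\item \emph{Fix for the unstable side.} First reduce to ergodicity of $\nu$, which does not depend on $\sigma'$ (cf.\ Proposition~\ref{erg of flow}). Then run the stable-side argument for the $\sigma$-action. Run the unstable-side argument for the action $\gamma(x,y,t)=(\gamma x,\gamma y,t-\bar\sigma(\gamma,y))$, whose shift depends only on the backward endpoint.
\item \emph{Minor points.} The inequality $s\ge\delta_\sigma$ uses the \emph{lower} shadow bound, not the upper one. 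Infinitely many returns of $(x,y)$ to a compact set force only one of $x,y$ to be conical; this is harmless in the convergent case.
\end{itemize}
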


\subsection{Shadows}
Another important class of objects that we need is shadows.
\begin{defn}\label{shadow}
    Given $\eps >0$, and $\gamma \in \Gamma$, the associated \emph{shadow} is defined as $$S_{\eps}(\gamma):= \gamma.\left(M\setminus B_{\eps}(\gamma^{-1})\right)$$ where $B_{\eps}(\gamma^{-1})$ is the open ball of radius $\eps$ around $\gamma^{-1}$ with respect to the compactifying metric $d $ on $\Gamma \sqcup M$.
\end{defn}

\begin{fact}\cite[Proposition 5.1]{2024arXiv240409713B}\label{properties of shadows} Here are some properties of the shadows:
    \begin{itemize}
        \item For $\eps>0$, there exists $C'>0$ such that $$||\gamma||_{\sigma}-C'\leq \sigma(\gamma,\gamma^{-1}x)\leq ||\gamma||_{\sigma}+C'$$ for all $\gamma\in \Gamma$ and $x \in S_{\eps}(\gamma)$.
        \item If $\{g_n\} \subset \Gamma$ is an escaping sequence, then $$\lim_{n \rightarrow \infty } diam(S_{\eps}(g_n)) = 0 \text{ and } \lim_{n\rightarrow\infty} \inf_{x \in S_{\eps}(g_n)} d(g_n,x)=0.$$ In other words, the Hausdorff distance with respect to $d$ between $g_n$ and their shadows $S_{\eps}(g_n)$ converges to 0.
    \end{itemize}
\end{fact}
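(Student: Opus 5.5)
The first assertion should follow directly from the expanding property of $\sigma$, and the second from the first combined with the last bullet of Fact \ref{prop of cocycle}, argued by contradiction along sequences.

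\emph{First bullet.} Fix $\eps>0$ and let $x\in S_{\eps}(\gamma)$. By Definition \ref{shadow}, $y:=\gamma^{-1}x$ lies in $M\setminus B_{\eps}(\gamma^{-1})$, so $d(y,\gamma^{-1})\geq \eps > \eps/2$. Since $\sigma$ is expanding (Fact \ref{3.4}), applying the definition with $\eps/2$ gives a constant $C'=C(\eps/2)$ with
\[
||\gamma||_{\sigma}-C'\leq \sigma(\gamma,y)\leq ||\gamma||_{\sigma}+C'
\]
for every $\gamma\in\Gamma$. This is exactly the claimed inequality, since $y=\gamma^{-1}x$. The shift from $\eps$ to $\eps/2$ only handles the strict versus non-strict inequality in the definition. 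This step needs nothing else.

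\emph{Second bullet.} Let $\{g_n\}$ be escaping. I would prove the stronger statement that
\[
\sup_{x\in S_{\eps}(g_n)} d(g_n,x)\to 0 .
\]
Both claims follow from this: the infimum is at most the supremum (each shadow is nonempty, since $M\setminus B_{\eps}(g_n^{-1})$ is nonempty for small $\eps$ because $M$ is infinite), and the triangle inequality gives $\operatorname{diam} S_{\eps}(g_n)\leq 2\sup_{x} d(g_n,x)$.

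Suppose the supremum does not tend to $0$. Then, after passing to a subsequence, there are $\eta>0$ and points $x_n\in S_{\eps}(g_n)$ with $d(g_n,x_n)\geq\eta$. Set $y_n=g_n^{-1}x_n$. By the first bullet,
\[
\sigma(g_n,y_n)\geq ||g_n||_{\sigma}-C' .
\]
By the third bullet of Fact \ref{prop of cocycle}, $||g_n||_{\sigma}\to+\infty$, so $\sigma(g_n,y_n)$ is in particular bounded below. The last bullet of Fact \ref{prop of cocycle} (the subsequence is still escaping) then gives $d(g_n.y_n,g_n)=d(x_n,g_n)\to 0$, contradicting $d(g_n,x_n)\geq\eta$. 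Hence the supremum tends to $0$ along the full sequence, so the Hausdorff distance between $g_n$ and $S_{\eps}(g_n)$ tends to $0$.

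\emph{Main obstacle.} The only delicate point is the uniformity over the shadow: the last bullet of Fact \ref{prop of cocycle} is stated for a single sequence $\{x_n\}$. The contradiction argument above, which picks a worst point $x_n$ in each shadow, is what converts it into a uniform statement. If that bullet were not available, I would instead pass to a subsequence on which $g_n\to a$ and $g_n^{-1}\to b$ in $\Gamma\sqcup M$, as in the convergence property. I would then use that $M\setminus B_{\eps}(g_n^{-1})$ eventually lies in the compact set $M\setminus B_{\eps/2}(b)$, on which $g_n\to a$ uniformly. This second route needs the compatibility of the compactifying metric $d$ with attracting and repelling points \cite[Proposition 2.3]{2024arXiv240409713B}.
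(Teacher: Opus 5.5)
The paper gives no proof of this statement; it is imported from \cite[Proposition 5.1]{2024arXiv240409713B}, so there is no internal argument to compare yours against. Your proof is correct.

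\emph{First bullet.} This is indeed just the expanding property, which is uniform in $\gamma$. Passing to $\eps/2$ correctly absorbs the strict inequality in the definition.

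\emph{Second bullet.} Your contradiction argument through the third and fourth items of Fact \ref{prop of cocycle} is valid. It proves the stronger statement $\sup_{x\in S_{\eps}(g_n)} d(g_n,x)\to 0$, which is exactly the Hausdorff convergence claimed. There is no circularity, because the fourth item does not rely on shadows. It follows in two lines from the expanding property applied to $g_n^{-1}$ together with the cocycle identity $\sigma(g_n,y_n)=-\sigma(g_n^{-1},g_ny_n)$.

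Unwinding that item, your whole argument reduces to one inequality. If $x_n=g_ny_n\in S_{\eps}(g_n)$ and $d(x_n,g_n)\geq\eta$, then
\[
||g_n||_{\sigma}-C' \leq \sigma(g_n,y_n) \leq C''-||g_n^{-1}||_{\sigma}.
\]
This is impossible, since both magnitudes diverge.

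The dynamical fallback you sketch buys something different. There you pass to $g_n\to a$, $g_n^{-1}\to b$ and use uniform convergence on $M\setminus B_{\eps/2}(b)$. This shows the second bullet is a purely topological fact about convergence actions on $\Gamma\sqcup M$, independent of any cocycle. However, it needs the identification of the attracting and repelling points with the limits of $g_n^{\pm1}$ in the compactification \cite[Proposition 2.3]{2024arXiv240409713B}. Your main route uses the cocycle instead, but stays entirely within facts already recorded in the paper.

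Your remark about nonemptiness is also a genuine implicit hypothesis of the statement. The infimum clause requires the shadows to be nonempty, so $\eps$ must be smaller than half the diameter of $M$. You handled this correctly.
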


\begin{rmk}
    Classically the shadows are defined as projections of balls around $g.o$ to the boundary for some fixed base point $o$ in the associated metric space. Although $\Gamma \sqcup M$ is metrizable, the interior $\Gamma$ is equipped with the discrete topology, so we cannot naively uses the classical ways to construct shadows. Nevertheless, the shadows in Definition \ref{shadow} exhibits similar properties to the classical shadows. In fact, they are equivalent in Gromov hyperbolic spaces (see \cite[Proposition 5.5]{2024arXiv240409713B}).
\end{rmk}

\begin{lemma}\label{geod ball and shadows}
    Fix a compact set $K \subset \tilde{\Omega}_{\Gamma}$. For all $\eps >0$ sufficiently small, there exists $R>0$ large enough such that: if $t \geq R$, $v =(x,y,t) \in K$, and $\Tilde{\psi}^t(v) \in \gamma.\tilde{K}$ for some $\gamma \in \Gamma$, then $x$ is in the shadow $S_{\eps}(\gamma)$.
\end{lemma}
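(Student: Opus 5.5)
The plan is to argue by contradiction and compactness, using the convergence-group dynamics of $\Gamma$ on $\Gamma\sqcup M$. I read $\tilde K$ as $K$ itself, or any fixed compact set; the argument is the same. Let $\pi(K)\subset \Lambda_\Gamma^{(2)}$ be the projection of $K$ to the endpoint pairs. Since it is compact and lies off the diagonal, there is $\eps_0>0$ with $d(x,y)\geq 3\eps_0$ for all $(x,y,t)\in K$. I will show the lemma holds for every $\eps<\eps_0$.

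\textbf{Setting up the contradiction.} Suppose it fails for some $\eps<\eps_0$. Then there are $v_n=(x_n,y_n,s_n)\in K$, times $t_n\to\infty$, elements $\gamma_n\in\Gamma$ and points $w_n\in K$ with $\tilde\psi^{t_n}(v_n)=\gamma_n w_n$, while $d(\gamma_n^{-1}x_n,\gamma_n^{-1})<\eps$. Since $\Gamma$ acts on $\tilde\Omega_\Gamma$ by $\gamma^{-1}(x,y,t)=(\gamma^{-1}x,\gamma^{-1}y,t+\sigma'(\gamma^{-1},x))$, we get
\[
w_n=\bigl(\gamma_n^{-1}x_n,\ \gamma_n^{-1}y_n,\ s_n+t_n+\sigma'(\gamma_n^{-1},x_n)\bigr).
\]
The third coordinate is bounded because $w_n\in K$, while $s_n$ is bounded and $t_n\to\infty$.

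\textbf{Step 1: $\{\gamma_n\}$ is escaping.} If not, some $\gamma$ occurs infinitely often. Then $\sigma'(\gamma^{-1},\cdot)$ is continuous on the compact space $M$, hence bounded. This contradicts the boundedness of the time coordinate of $w_n$. This is also essentially the proper discontinuity of the action, Proposition 10.2 of \cite{2024arXiv240409713B}.

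\textbf{Step 2: identify the attracting and repelling points.} After passing to a subsequence, the convergence property in the compactification $\Gamma\sqcup M$ (\cite[Proposition 2.3]{2024arXiv240409713B}) gives points $a,b\in M$ such that:
\begin{itemize}
\item $\gamma_n^{-1}\to a$ in $\Gamma\sqcup M$;
\item $\gamma_n\to b$;
\item $\gamma_n^{-1}$ converges to $a$ locally uniformly on $M\setminus\{b\}$.
\end{itemize}
We may also assume $x_n\to x$, $y_n\to y$ with $d(x,y)\geq 3\eps_0$, and $\gamma_n^{-1}x_n\to x'$, $\gamma_n^{-1}y_n\to y'$ with $d(x',y')\geq 3\eps_0$, since $v_n,w_n\in K$.

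\textbf{Step 3: show $x=b$.} Rewrite the bound on the time coordinate via the cocycle identity as $\sigma'(\gamma_n,\gamma_n^{-1}x_n)=-\sigma'(\gamma_n^{-1},x_n)\to+\infty$; in particular it is bounded below. The last bullet of Fact \ref{prop of cocycle}, applied to the expanding cocycle $\sigma'$ with $g_n=\gamma_n$ and the points $\gamma_n^{-1}x_n$, gives $d(x_n,\gamma_n)\to0$. Hence $x=b$.

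\textbf{Step 4: derive the contradiction.} Since $y\neq x=b$, the points $y_n$ eventually stay in a compact subset of $M\setminus\{b\}$. Local uniform convergence then gives $\gamma_n^{-1}y_n\to a$, so $y'=a$. Therefore
\[
d(\gamma_n^{-1}x_n,\gamma_n^{-1})\ \longrightarrow\ d(x',a)=d(x',y')\geq 3\eps_0>\eps,
\]
which contradicts $d(\gamma_n^{-1}x_n,\gamma_n^{-1})<\eps$. Having found the contradiction along a subsequence, we conclude that some $R$ works for this $\eps$.

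\textbf{Expected obstacle.} The delicate step is Step 3, which pins down that the repelling point $b$ of $\{\gamma_n^{-1}\}$ is $x$ rather than $y$. This is what forces the asymmetry between $x$ and $y$, and it depends on reading the sign of $\sigma'$ correctly from the action formula. I also need the last bullet of Fact \ref{prop of cocycle} for $\sigma'$ (assumed expanding), applied with the moving points $\gamma_n^{-1}x_n$. If that bullet could not be used directly, the fallback is to derive $x=b$ from the shadow estimates in Fact \ref{properties of shadows} together with the uniform bound $\sigma'(\gamma,z)\leq\|\gamma\|_{\sigma'}+C$ away from $\gamma^{-1}$. The remaining steps are routine compactness.
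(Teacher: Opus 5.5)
Your proof is correct and uses the same ingredients as the paper's: a contradiction sequence with $t_n\to\infty$, the convergence dynamics of $\gamma_n^{\pm1}$ on $\Gamma\sqcup M$, and the last bullet of Fact~\ref{prop of cocycle} applied to $\sigma'$. The difference is only the order of the argument. You use $\sigma'(\gamma_n,\gamma_n^{-1}x_n)\to+\infty$ directly to force $x=\lim\gamma_n$, and then contradict the shadow hypothesis. The paper instead uses the shadow hypothesis (via its $1/n$ device) to force $v^-=\lim g_n$, and then contradicts the growth of $\sigma'(g_n,w_n^+)$ through the contrapositive of that bullet. Your ordering has two small advantages: it works for every fixed $\eps$ in an explicit range set by the minimal endpoint separation on $K$ (measured on $\tilde K$ if that set differs), and it checks explicitly that $\{\gamma_n\}$ is escaping.
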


\begin{proof}
    We prove this by contradiction. Assume the lemma does not hold, then there exist $v_n, w_n \in K$ with $$v_n := (v_n^+, v_n^-, s_{1,n}) \text{ and }w_n := (w_n^+, w_n^-, s_{2,n})$$ and $t_n \rightarrow +\infty$ and $g_n \in \Gamma$ such that $$\Tilde{\psi}^{t_n}(v_n) = g_n .w_n \in g_n.K,$$ and $d(w_n^+, g_n^{-1}) \leq \frac{1}{n} < \eps, $ or equivalently $g_n. w_n^+ = v_n^+ \notin S_{\frac{1}{n}}(g_n)$ for $n$ large.

    Since $K$ is compact, up to taking subsequences, we can assume $v_n \rightarrow v:=(v^+, v^-, s_1) $ and $w_n \rightarrow w:= (w^+, w^-, s_2)$ in $ K. $ And because $t_n \rightarrow +\infty$, we have $g_n^{\pm1} \rightarrow \xi^{\pm} \in \Lambda_{\Gamma}$. Moreover, $d(w_n^+, w_n^-) $ is bounded away from zero as the flow line passes through $K$. Let $\eps_0: = \inf_n d(w_n^+, w_n^-)$, by triangle inequality, we have $d(w_n^-, g_n^{-1})>d(w_n^+,w_n^-)-d(w^+, g_n^{-1}) >\eps_0/2 $ for $n $ large enough. By assumption, $w_n^+ $ converges to $\xi^-$, then the convergence group action property ensures that $g_n. w_n^-= v_n^-$ converges to $v^-=\xi^+$. We also have $$d(g_n w_n^+, g_n)= d(v_n^+, g_n)\rightarrow d(v^+, \xi^+).$$
    Note that $v^+ \neq \xi^+$, because $v^- =\xi^+$ and $v^+\neq v^-$. This implies $d(v^+,\xi^+)$ is strictly bigger than 0, 
    then the fourth property in Fact \ref{prop of cocycle} implies \begin{equation}\label{4}
        \sigma'(g_n, w_n^+) \rightarrow -\infty .
    \end{equation}
    However, this gives a contradiction: by construction $s_{1,n}+t_n= s_{2,n}+\sigma'(g_n, w_n^+)$ and $s_{1,n}-s_{2,n}$ is bounded because $K$ is compact, then $t_n \rightarrow +\infty$ implies $\sigma'(g_n, w_n^+) \rightarrow +\infty$ which contradicts Equation \ref{4}, and hence proves the lemma.
\end{proof}

\begin{cor}\label{cor of geod ball and shadow}
For all $\eps>0$ small enough, there exists $R'>0$ such that, for all $g, g' \in \Gamma$, if there exist $v \in K$ and $t_{g'}>t_g>R'$ such that $\tilde{\psi}^{t_g}(v) \in g.K$ and $\tilde{\psi}^{t_{g'}}(v) \in g'.K$, we have $$g '\in g.\left((\Gamma\sqcup M\smallsetminus B_{\eps}(g^{-1})\right),$$ where $B_{\eps}({g}^{-1})$ is the open ball of radius $\eps$ around ${g}^{-1}$ defined by the compactifying metric $d$ on $\Gamma\sqcup M$.
\end{cor}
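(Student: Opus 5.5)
The plan is to argue by contradiction, in the same spirit as the proof of Lemma \ref{geod ball and shadows}. That lemma is the "boundary" version of the present statement; here we replace the forward endpoint $x$ by the group element $g'$, which sits further along the same flow line.

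Suppose the corollary fails for some small $\eps$. Then there are sequences $v_n=(v_n^+,v_n^-,s_n)\in K$, elements $g_n,g_n'\in\Gamma$, and times $t_{g_n'}>t_{g_n}>n$ with $\tilde\psi^{t_{g_n}}(v_n)=g_n w_n\in g_n.K$, $\tilde\psi^{t_{g_n'}}(v_n)=g_n' w_n'\in g_n'.K$, and
$$d\bigl(g_n^{-1}g_n',\,g_n^{-1}\bigr)<\eps.$$
Since $t_{g_n}\to\infty$ and $K$ is compact, $g_n$ is escaping. Passing to subsequences, $v_n\to v$, $w_n\to w$, $w_n'\to w'$ in $K$, and $g_n^{\pm1}\to\xi^{\pm}$. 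As in the proof of Lemma \ref{geod ball and shadows}, the cocycle identity $s_n+t_{g_n}=s(w_n)+\sigma'(g_n,w_n^+)$ forces $\sigma'(g_n,w_n^+)\to+\infty$. The fourth item of Fact \ref{prop of cocycle} then gives $v_n^+=g_nw_n^+\to\xi^+$, and the convergence dynamics give $w_n^-\to\xi^-$. So $w^-=\xi^-$ and $w^+\neq\xi^-$.

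Now re-center by $g_n^{-1}$. The element $h_n:=g_n^{-1}g_n'$ carries a point of $K$, namely $w_n'$, to the point
$$g_n^{-1}\tilde\psi^{t_{g_n'}}(v_n)=\tilde\psi^{t_{g_n'}-t_{g_n}}(w_n),$$
which lies on the forward flow ray of $w_n\in K$. There are two cases.

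\textbf{Case 1: $t_{g_n'}-t_{g_n}$ stays bounded.} Then $h_n$ maps a point of $K$ into a bounded flow-neighborhood of $K$. By proper discontinuity of the $\Gamma$-action (Proposition 10.2 of \cite{2024arXiv240409713B}), $h_n$ ranges over a finite set. We want to rule out $d(h_n,g_n^{-1})<\eps$: since $g_n^{-1}\to\xi^-\in M$ while the $h_n$ lie in a finite subset of the open discrete set $\Gamma$, the distance $d(h_n,g_n^{-1})$ is bounded below by some $\eps_1>0$. We therefore choose $\eps<\eps_1$. Care is needed here so that $\eps_1$ depends only on $K$ and on a bound for the time gap, not on the sequence. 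To handle this, I would fix the finite set $F$ of elements $h$ with $hK\cap\tilde\psi^{[0,T]}K\neq\emptyset$, and use that $\Gamma$ is discrete in $\Gamma\sqcup M$: any $g$ with $d(h,g^{-1})$ small for some $h\in F$ must be one of finitely many elements, which is excluded by choosing $R'$ large.

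\textbf{Case 2: $t_{g_n'}-t_{g_n}\to\infty$.} Then $h_n$ is escaping. The points $h_nw_n'$ lie on flow rays starting in $K$ at times tending to $\infty$, so, by the same shadow argument applied to $w_n$ in place of $v_n$, we get $h_n\to w^+$ in $\Gamma\sqcup M$. (Lemma \ref{geod ball and shadows} puts $w_n^+$ in the shadow $S_\eps(h_n)$, and Fact \ref{properties of shadows} says these shadows shrink to $h_n$.) Hence
$$d(h_n,g_n^{-1})\to d(w^+,\xi^-)>0,$$
using $w^+\neq w^-=\xi^-$. The positivity of this limit is uniform because $K$ is compact: the quantity $\inf_K d(w^+,w^-)$ is positive. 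So taking $\eps$ below half of that infimum gives a contradiction for large $n$.

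The main obstacle is Case 1, that is, making sure the bound $\eps$ is uniform. The cleanest fix is probably to split at a threshold $T$: handle time gaps $\le T$ by the finiteness and discreteness argument above, and handle gaps $>T$ by the Case 2 argument run with $T\to\infty$.
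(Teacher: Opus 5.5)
Your argument is correct and is essentially the paper's proof. Both argue by contradiction, recenter by $g_n^{-1}$, and use the reasoning of Lemma \ref{geod ball and shadows} to show that $g_n^{-1}g_n'$ and $g_n^{-1}$ converge to the distinct points $w^+$ and $w^-=\xi^-$; your Case 2 writes out what the paper leaves as ``the same argument''.

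The one difference is how the failure of the statement is negated. The paper takes $d(g_n^{-1}g_n',g_n^{-1})<1/n$, which is legitimate because the conclusion for some $\eps_0$ gives it for every $\eps\le\eps_0$ with the same $R'$. Then $g_n^{-1}g_n'\to\lim g_n^{-1}\in M$, so $g_n^{-1}g_n'$ must escape, and your bounded-gap Case 1 and its uniformity issue never arise.
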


\begin{proof}
   The proof idea is similar to that of Lemma \ref{geod ball and shadows}. Assume this corollary fails, then there exist ${g}_n, g_n' \in \Gamma,$ $v_n \in K$, and $t_n':= t_{g'_n}>t_n:= t_{{g}_n} >n$ such that $$\tilde{\psi}^{t_n}(v_n) \in g_n.K, \tilde{\psi}^{t_n'}(v_n) \in g_n'.K \text{ and } d\left(({g}_n)^{-1}g'_n, ({g}_n)^{-1}\right) <\frac{1}{n}.$$
   Take a limit as $n \rightarrow \infty,$ we have $(g_n)^{-1}.K $ converges to some point $x \in \Lambda_{\Gamma}$ since $t_n \rightarrow \infty$ (equivalently $({g}_n)^{-1} \rightarrow x$). By assumption, $({g}_n)^{-1}g'_n$ also converges to $x$. Then the same argument in the proof of Lemma \ref{geod ball and shadows} gives rise to a contradiction.
\end{proof}

The shadows also satisfy a shadow lemma that estimates their measures with respect to the Patterson--Sullivan measure $\mu$ of dimension $\delta$ on $M.$

\begin{thm}[Shadow Lemma]\cite[Theorem 6.1]{2024arXiv240409713B}\label{shadow lemma}
    For $\eps>0$ small enough, there exists $c:= c(\eps)>1$ such that $$\frac{1}{c}\cdot\exp{(-\delta \cdot||\gamma||_{\sigma})}\leq \mu (S_{\eps}(\gamma))\leq c \cdot\exp{(-\delta \cdot||\gamma||_{\sigma}) }$$ for all $\gamma \in \Gamma.$
\end{thm}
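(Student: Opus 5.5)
The statement follows from the quasi-invariance of $\mu$ combined with the expanding property of $\sigma$; the only point needing care is a uniform lower bound on the mass of the complement of a small ball.

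\textbf{Change of variables.} Write $A_\gamma := M\setminus B_{\eps}(\gamma^{-1})$, so that $S_{\eps}(\gamma)=\gamma.A_\gamma$. The defining property of a $\sigma$-Patterson--Sullivan measure of dimension $\delta$, applied to $\gamma^{-1}$, gives
$$\mu(\gamma.A_\gamma)=\big((\gamma^{-1})_*\mu\big)(A_\gamma)=\int_{A_\gamma} e^{-\delta\,\sigma(\gamma,x)}\,d\mu(x).$$

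\textbf{Comparing $\sigma$ with $\|\cdot\|_\sigma$.} By Fact \ref{3.4}, $\sigma$ is expanding. Hence for the fixed $\eps$ there is a constant $C=C(\eps)$ with $|\sigma(\gamma,x)-\|\gamma\|_\sigma|\le C$ for every $x\in A_\gamma$, that is, whenever $d(x,\gamma^{-1})\ge\eps$. (Equivalently, one can use the first item of Fact \ref{properties of shadows}.) Substituting into the integral gives
$$e^{-\delta C}\,\mu(A_\gamma)\,e^{-\delta\|\gamma\|_\sigma}\;\le\;\mu(S_\eps(\gamma))\;\le\; e^{\delta C}\,\mu(A_\gamma)\,e^{-\delta\|\gamma\|_\sigma}.$$
The upper bound follows at once from $\mu(A_\gamma)\le 1$.

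\textbf{Uniform lower bound on $\mu(A_\gamma)$.} For the lower bound I need $\eps_0>0$ and $\eta>0$ such that $\mu(B_\eps(z)\cap M)\le 1-\eta$ for all $z\in\Gamma\sqcup M$ and all $\eps<\eps_0$. This is the main (though mild) obstacle, and it is where the hypothesis ``$\eps$ small enough'' enters.

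I would argue by contradiction. Suppose there are $z_n\in\Gamma\sqcup M$ and $\eps_n\to 0$ with $\mu(B_{\eps_n}(z_n))\to 1$. Since $\Gamma\sqcup M$ is compact metrizable, pass to a subsequence with $z_n\to z$. Every closed ball $\bar B_r(z)$ eventually contains $B_{\eps_n}(z_n)$, so $\mu(\bar B_r(z))=1$ for all $r>0$, and therefore $\mu(\{z\})=1$.

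This contradicts the structure of the support. The support of $\mu$ is a nonempty closed subset of $\Lambda_\Gamma$. It is $\Gamma$-invariant because $\gamma_*\mu$ and $\mu$ are mutually absolutely continuous. By minimality of the limit set of a non-elementary convergence group, the support is all of $\Lambda_\Gamma$, which has more than two points. So $\mu$ cannot be a Dirac mass.

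\textbf{Conclusion.} Combining the three steps, I would take $c=\max\{e^{\delta C},\,e^{\delta C}/\eta\}$.
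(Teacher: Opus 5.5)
Your proposal is correct. The paper gives no proof of its own; it only cites Theorem 6.1 of Blayac--Canary--Zhu--Zimmer, and your argument is the standard Sullivan proof used there: change variables through the Radon--Nikodym cocycle, compare $\sigma(\gamma,\cdot)$ with $\|\gamma\|_{\sigma}$ off $B_{\eps}(\gamma^{-1})$ via the expanding property, and get the lower bound from a uniform estimate $\sup_{z}\mu(B_{\eps}(z))\le 1-\eta$, which holds because $\mu$ is not a Dirac mass. One cosmetic point: the expanding property is stated for $d(x,\gamma^{-1})>\eps$, so invoke it with $\eps/2$ to cover the closed condition $d(x,\gamma^{-1})\geq\eps$.
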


Another use of the shadows is to give an alternative characterization of the set of conical limit points.

\begin{defn}[Uniform Conical Limit Points]
    Given $\eps>0$, the set of $\eps-$uniform limit points, denoted by $\Lambda_{\Gamma,\eps}^{con}$, consists $x \in M$ such that there exist $a,b \in M$ with $d(a,b)>\eps$ and a sequence $\gamma_n \subset \Gamma$ such that $$\lim_{n \rightarrow \infty} \gamma_n.x = a \text{ and } \lim_{n \rightarrow \infty} \gamma_n.y = b \text{ for all } y \in M \setminus \{x\}$$ 
\end{defn}

\begin{fact}\cite[Lemma 5.4]{2024arXiv240409713B}\label{alt char for conical} The sets $\Lambda_{\Gamma,\eps}^{com}$ have the following properties:
    \begin{itemize}
    \item $\Lambda_{\Gamma}^{con} = \cap_{\eps>0} \Lambda_{\Gamma,\eps}^{con}.$ 
        \item $\Lambda_{\Gamma,\eps}^{con}$ is $\Gamma-$invariant for all $\eps>0.$
        \item If $x \in \Lambda_{\Gamma,\eps}^{con} $ and $0<\eps'<\eps,$ then there exists an escaping sequence $\{\gamma_n\}$ such that $x \in \cap_n S_{\eps'}(\gamma_n).$
        \item If there exists an escaping sequence $\{\gamma_n\}$ such that $x \in \cap_n S_{\eps}(\gamma_n)$, then $x \in \Lambda_{\Gamma,\eps}^{con}.$
    \end{itemize}
\end{fact}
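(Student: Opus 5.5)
The plan is to prove the four items in a different order: the second first, then the third and fourth (which carry the real content), and the first last, since it follows formally. One tool is used throughout. By \cite[Proposition 2.3]{2024arXiv240409713B}, $\Gamma$ also acts as a convergence group on the compactification $\Gamma\sqcup M$, and $\Gamma$ is open in it. So any escaping sequence $\{h_n\}$ has a subsequence with $h_n\to b\in\Lambda_\Gamma$ in $\Gamma\sqcup M$ and $h_n|_{M\setminus\{y\}}\to b$ locally uniformly for some $y\in M$.

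\emph{$\Gamma$-invariance.} Let $x\in\Lambda^{con}_{\Gamma,\eps}$ be witnessed by $\{\gamma_n\}$, $a$, $b$, and let $g\in\Gamma$. I would use the sequence $\gamma_n g^{-1}$ for the point $gx$. Then $\gamma_n g^{-1}(gx)=\gamma_n x\to a$. For $y\neq gx$ we have $g^{-1}y\neq x$, so $\gamma_n g^{-1}y\to b$. The same pair $a,b$ works, so $gx\in\Lambda^{con}_{\Gamma,\eps}$.

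\emph{Third item.} First I would check that the witnessing sequence $\{\gamma_n\}$ is escaping. If some element repeated infinitely often, the homeomorphism $\gamma_n$ would send all of $M\setminus\{x\}$ to the single point $b$, which is absurd since $M$ is infinite. After passing to a subsequence, $\gamma_n$ therefore converges in $\Gamma\sqcup M$ to an attracting point with some repelling point $y'$. Because $\gamma_n y\to b$ for every $y\neq x$ (and there are at least two such $y$), the attracting point is $b$. The repelling point $y'$ lies in $M$, so it is not the identity element; hence $\gamma_n=\gamma_n\cdot\mathrm{id}\to b$ in $\Gamma\sqcup M$.

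Now unwind the definition of the shadow. Write $x=\gamma_n^{-1}(\gamma_n x)$. Then $x\in S_{\eps'}(\gamma_n^{-1})$ is equivalent to $d(\gamma_n x,\gamma_n)\ge\eps'$. Since $d(\gamma_n x,\gamma_n)\to d(a,b)>\eps>\eps'$, this holds for all large $n$. The required escaping sequence is $\{\gamma_n^{-1}\}_{n\ge n_0}$.

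\emph{Fourth item.} Suppose $x\in S_\eps(\gamma_n)$ for all $n$, with $\{\gamma_n\}$ escaping, and set $h_n=\gamma_n^{-1}$. The shadow condition says exactly $d(h_nx,h_n)\ge\eps$. Pass to a subsequence with $h_n\to b$ in $\Gamma\sqcup M$, $h_n|_{M\setminus\{y\}}\to b$ locally uniformly, and $h_nx\to a$. Then $d(a,b)\ge\eps$, so $a\neq b$. This forces $y=x$: otherwise $x\in M\setminus\{y\}$ and $h_nx\to b$. Hence $\{h_n\}$ exhibits $x$ as an $\eps$-uniform conical point.

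The main obstacle I expect is the boundary case of the strict inequality. This argument only yields $d(a,b)\ge\eps$, whereas the definition asks for $d(a,b)>\eps$. I would first try to resolve this by passing to a further subsequence and comparing $d(h_nx,h_n)$ with its limit. If that fails, the fix is to match conventions, for example by taking shadows with respect to closed balls, or by reading the definition with $\ge$. With the definition as written, the argument certainly gives $x\in\Lambda^{con}_{\Gamma,\eps''}$ for every $\eps''<\eps$, and that weaker statement is all that the first item needs.

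\emph{First item.} By definition, $\Lambda^{con}_{\Gamma,\eps}$ decreases as $\eps$ increases. A point is conical exactly when some sequence produces distinct $a,b$, that is, when $d(a,b)>\eps$ for some $\eps>0$. So the conical set is the increasing union $\bigcup_{\eps>0}\Lambda^{con}_{\Gamma,\eps}$, which is what I would prove. I read the displayed intersection as this union, since an intersection would give only the points that are $\eps$-uniform for every $\eps$, which is a much smaller set.
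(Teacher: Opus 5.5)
Your proof is correct and is essentially the standard argument behind the cited \cite[Lemma 5.4]{2024arXiv240409713B}; the paper itself gives no proof of this Fact. As in that argument, you translate the witnessing sequence to get invariance, use $\gamma_n\to b$ in $\Gamma\sqcup M$ together with $x\in S_{\eps'}(\gamma_n^{-1})\iff d(\gamma_n x,\gamma_n)\ge\eps'$ for the third item, and pass $d(\gamma_n^{-1}x,\gamma_n^{-1})\ge\eps$ to the limit to force the repelling point to be $x$ for the fourth.

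Both of your corrections to the transcription are right. The first item must be the union $\bigcup_{\eps>0}\Lambda^{con}_{\Gamma,\eps}$, since the intersection is empty ($M$ has finite diameter). The fourth item holds with the convention $d(a,b)\ge\eps$, which is the one that matches open-ball shadows. Your subsequence idea cannot rescue the strict version, because every subsequence has the same limit $d(a,b)$, and that limit may equal $\eps$.
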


\section{Sufficient Condition for Finite BMS Measure}\label{finite bms}
The concept of positive recurrence originated from Sarig's work \cite{sarigthermo} and \cite{14ed99c1659848e68ee31e39133ee06d} on symbolic dynamics. This was later extended to the geodesic flow on the unit tangent bundle of negatively curve manifolds by Schapira--Pit \cite{2016arXiv161003255S}. In order to state the condition in our setting precisely, we need to define a couple of things first.

Let $M,\text{ }\Gamma$ be as defined in Section \ref{conv group action}. Fix two expanding cocyles $\sigma$ and $\sigma'$ with respect to $\Gamma$ and $M$, and $(\sigma, \bar{\sigma},G)$ a continuous GPS system. Let $\sigma'$ define the action of $\Gamma$ on $\Tilde{\Omega}_{\Gamma}$, and let $\Omega_{\sigma'}$ to denote the quotient of $\Tilde{\Omega}_{\Gamma}$ under this action as in Section \ref{flow space}. Let $\mu$ and $\bar{\mu}$ be the PS measures of dimension $\delta_{\sigma}$ on $M$ associated with $\sigma$ and $\bar{\sigma}$ respectively. Let $\tilde{m}_{\sigma}$ and $m_{\sigma}$ be the measures on $\tilde{\Omega}_{\Gamma}$ and $\Omega_{\sigma'}$ associated with the GPS system.

Let $K \subset \Tilde{\Omega}_{\Gamma}$ be a compact set. We define the \emph{subset of $\Gamma$ outside of $K$}, denoted by $\Gamma_K$, as follows: $\gamma \in \Gamma$ is in $\Gamma_K$ if there exist $v \in K$ and $t \geq 0$ such that $\Tilde{\psi}^t(v) \in \gamma.K$ and $\Tilde{\psi}^{[0,t]}(v) \cap \Gamma.K\subset  K\cup \gamma.K.$ Heuristically, the subset $\Gamma_K$ contains all group elements where there exists some flow line that passes through $K$ and $\gamma.K$ and it does not intersect any other copies of $K$ in between.

\begin{defn}[Positive Recurrence]\label{positive rec}
   We say the subgroup $\Gamma\subset Homeo(M)$ is \emph{positive recurrent} with respect to $\sigma', \sigma$, and a compact set $K \subset \Tilde{\Omega}_{\Gamma}$ if the Poincar\'e series $Q_{\Gamma,\sigma}$ diverges at the critical exponent $ \delta_{\sigma}$, and $$\sum_{\gamma \in \Gamma_{K} }||\gamma||_{\sigma'}  \cdot \exp{(-\delta_{\sigma} \cdot ||\gamma||_{\sigma})} <+\infty.$$
\end{defn}

An important tool used to prove that positive recurrence implies finite BMS measure on $\Omega_{\sigma'}$ is \emph{Kac's Lemma}. The original Kac's Lemma was stated for discrete dynamical systems, but one can circumvent this issue by discretizing the continuous flow $\psi^t$. 

 Let $X $ be a locally compact topological space, $\psi^t$ a continuous flow on $X$, and $\nu$ a Borel $\psi^t$ invariant measure on $X$. For any set $W \subset X$ and $\eps >0$, define $$W_{\eps}:= \bigcup_{t \in [0, \eps]} \psi^{-t}(W).$$ For any $x \in X$, define the \emph{$\eps-$hitting time} with respect to $W$ as \begin{equation}\label{16}
    \tau_{\eps, W}(x):= \inf \{t\geq \eps\text{ }|\text{ } \psi^t(x) \in W\}.
 \end{equation} 

\begin{prop}[Kac's Lemma for Flows]\cite[Proposition 4.2]{2016arXiv161003255S}\label{kac's lemma}
    Let $W$ be a compact subset of $X$ with positive $\nu$ measure. If the flow $\psi^t$ is ergodic and conservative with respect to $(X,\nu)$, then for every $\eps >0$, $W_{\eps}$ is also compact in $X$. Moreover, for almost all $x \in W_{\eps}$ with respect to $\nu$, the $\eps-$hitting time $\tau_{\eps, W}(x)$ is finite, and 
    \begin{equation}
        \sum_{k\in \N} k\cdot\nu\left(\{x \in W_{\eps}\text{ }|\text{ } \tau_{\eps, K}(x) \in [k\eps, (k+1)\eps]\}\right) = \nu(X).
    \end{equation}
\end{prop}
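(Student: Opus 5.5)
Kac's lemma is cited from Pit--Schapira, so the plan is to reproduce the standard discretization argument. Fix $\eps>0$ and consider the time-$\eps$ map $T:=\psi^{\eps}$. It preserves $\nu$. Conservativity and ergodicity of the flow do not automatically pass to $T$, so I will avoid claiming ergodicity of $T$ directly. Instead I will work with the flow-invariant set of points whose forward orbit meets $W$ at arbitrarily large times.

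Compactness of $W_{\eps}$ is immediate. It is the image of the compact set $W\times[0,\eps]$ under the continuous map $(x,t)\mapsto\psi^{-t}(x)$.

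For finiteness of $\tau_{\eps,W}$, the set of $x$ whose forward flow orbit returns to $W$ at arbitrarily large times has full measure in $W_\eps$ by conservativity, applied to the set $W_\eps$ of positive measure via Poincaré recurrence for flows. Hence $\tau_{\eps,W}<\infty$ almost everywhere on $W_\eps$.

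For the identity, the key observation is
\[
x\in W_\eps \iff \psi^{t}(x)\in W \text{ for some } t\in[0,\eps].
\]
So $T^{-k}W_\eps$ is the set of points hitting $W$ during the time window $[k\eps,(k+1)\eps]$. Define the first-return-type decomposition
\[
A_k:=\{x\in W_\eps : \tau_{\eps,W}(x)\in[k\eps,(k+1)\eps)\}.
\]
Up to null sets (boundary ties), $A_k$ is exactly the set of points $x\in W_\eps$ with $T^jx\notin W_\eps$ for $1\le j<k$, modulo the slight overlap coming from the $\eps$ offset in the definition of $\tau$. I will carry out the classical Kac tower argument with $T$ and the set $W_\eps$. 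The sets $T^{j}A_k$ for $0\le j<k$ are pairwise disjoint up to null sets, and their union is $\bigcup_{n\ge0}T^{n}W_\eps$ restricted to the forward orbit. Counting $\sum_k k\,\nu(A_k)$ then gives the measure of this union. The union is $T$-invariant, and because $T$ and $\psi^t$ commute it is $\psi^t$-invariant up to a set of measure zero. Indeed $\psi^s$ of it, for $s\in[0,\eps]$, is contained in the analogous union built from a shifted window, which is handled by enlarging by one time step. By ergodicity of the flow and positive measure, this union is conull, which gives the identity with total mass $\nu(X)$.

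The main obstacle is the bookkeeping between continuous hitting times and discrete returns of $T$. The points to control are that $\tau\in[k\eps,(k+1)\eps]$ corresponds exactly to a first return of $T$ to $W_\eps$ at step $k$ (or $k+1$), and that the boundary cases and the requirement $\tau\ge\eps$ have measure zero. Equivalently, one needs that $\nu(\{\tau\in\eps\N\})=0$, or that this set is absorbed harmlessly since the index $k$ shifts by one only on it. I would handle this by noting that the statement uses closed intervals, so overlaps occur only on level sets of $\tau$. I will either show these are null by flow invariance and Fubini in the time direction, or accept a double count there, which does not affect the conclusion that the sum is $\nu(X)$ when it is finite and infinite otherwise.
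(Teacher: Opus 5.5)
Your route (discretize by $T=\psi^{\eps}$, build the Kac tower over $W_\eps$, and use ergodicity of the flow rather than of $T$) is the standard argument behind the cited Pit--Schapira result; the paper itself only cites it and gives no proof. Your compactness and a.e.\ finiteness steps are fine. The gap is in the boundary bookkeeping you flag as the main obstacle: neither of your proposed resolutions works.

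\begin{itemize}
\item $\nu(\{\tau_{\eps,W}\in\eps\N\})=0$ is false. Every $x\in\psi^{-\eps}(W)$ lies in $W_\eps$ and has $\tau_{\eps,W}(x)=\eps$, so this set has measure at least $\nu(W)>0$.
\item The index shift goes the other way. Let $r$ be the first return time of $T$ to $W_\eps$. Then $r=1$ iff $\tau\in[\eps,2\eps]$, and $r=n\geq 2$ iff $\tau\in(n\eps,(n+1)\eps]$. So for $k\ge 2$, $\tau=k\eps$ means first return at step $k-1$, not $k+1$.
\item A double count does change the value. For $k\ge 2$, a point with $\tau=k\eps$ lies in the closed windows for both $k-1$ and $k$. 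It therefore gets weight $2k-1$ instead of $r=k-1$. On a non-null set you would only get $\sum_k k\,\nu(\cdot)\ge\nu(X)$. That is the direction the paper actually uses later, but it is not the stated identity.
\end{itemize}

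What is true is the following.
\begin{itemize}
\item For $k\ge 2$, the set $\{\tau=k\eps\}$ is null. Along any orbit, the times $s$ with $\tau(\psi^s x)=k\eps$ are exactly those for which $s+k\eps$ is a hitting time of $W$ and $[s+\eps,s+k\eps)$ contains none. Such times are $(k-1)\eps$-separated, so Tonelli together with flow invariance gives measure zero.
\item The non-null set $\{\tau=\eps\}$ is harmless. Its only other window is $[0,\eps]$, which carries weight $0$.
\end{itemize}
With this, the sum is exactly $\int_{W_\eps}r\,d\nu$.

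The other unjustified step is the invariance of the tower. Its union is $E=\bigcup_{n\ge0}T^nW_\eps=\bigcup_{s\ge-\eps}\psi^s(W)$. This set is only forward invariant: $\psi^sE\subset E$ for $s\ge 0$. Commutation of $T$ with the flow does not give the reverse inclusion, so ``the union is $T$-invariant'' needs proof.

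You need conservativity once more, in backward form. Almost every point of $W$ meets $W$ at arbitrarily negative times. So $E$ contains the strictly flow-invariant set of such points, which has positive measure and is therefore conull by ergodicity of the flow. Alternatively, a forward-invariant set of finite measure is invariant mod $\nu$, and if $\nu(E)=\infty$ both sides of the identity are infinite.
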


With this version of the Kac's Lemma, we can prove the following theorem.
\begin{thm}\label{pr implies finite}
     If there exists a compact set $\tilde{K} \subset \Tilde{\Omega}_{\Gamma}$ such that $\Gamma$ is positive recurrent with respect to $\tilde{K}$, then $m_{\sigma}(\Omega_{\sigma'}) <+\infty.$
\end{thm}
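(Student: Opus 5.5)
Since positive recurrence includes divergence of $Q_{\Gamma,\sigma}$ at $\delta_\sigma$, Theorem \ref{dichotomy} makes the flow $\psi^s$ on $(\Omega_{\sigma'},m_\sigma)$ ergodic and conservative. The plan is therefore to apply Kac's Lemma (Proposition \ref{kac's lemma}) to $X=\Omega_{\sigma'}$ and $\nu=m_\sigma$, with $W$ the image in $\Omega_{\sigma'}$ of a compact set $K$ chosen as follows. We may enlarge $\tilde K$ to a compact $K$ with nonempty interior meeting the support of $\tilde m_\sigma$, so that $m_\sigma(W)>0$; we also keep the setup of Section \ref{flow space} with $\tilde K\subset K\subset int(K')$, and we must check that the summability hypothesis survives the enlargement. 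Kac's Lemma then gives
$$m_\sigma(\Omega_{\sigma'})=\sum_{k}k\,m_\sigma(\{\tau_{\eps,W}\in[k\eps,(k+1)\eps]\}),$$
so it suffices to bound the right-hand side by a constant times $\sum_{\gamma\in\Gamma_K}\|\gamma\|_{\sigma'}e^{-\delta_\sigma\|\gamma\|_\sigma}$ plus a finite term.

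\textbf{Lifting the return-time sets.} A point $x\in W_\eps$ with $\tau_{\eps,W}(x)\in[k\eps,(k+1)\eps]$ lifts to some $v\in K_\eps$ whose forward flow line first re-enters $\Gamma.K$, after time $\eps$, at $\tilde\psi^{t}(v)\in\gamma.K$ with $t\approx k\eps$. Up to the bounded overlap coming from the finitely many $g$ with $gK\cap K\neq\emptyset$ (proper discontinuity), the flow segment meets no other translates of $K$, so $\gamma\in\Gamma_K$. Hence
$$\{\tau\in[k\eps,(k+1)\eps]\}\subset\bigcup_{\gamma\in\Gamma_K}A_{\gamma,k},\qquad A_{\gamma,k}:=\{v\in K_\eps:\tilde\psi^{t}(v)\in\gamma.K\text{ for some }t\in[k\eps,(k+1)\eps]\}.$$
The cocycle relation on the $t$-coordinate gives $t=\sigma'(\gamma,x^+)+O(1)$, which by Fact \ref{properties of shadows} (applied to $\sigma'$) and Lemma \ref{geod ball and shadows} is $\|\gamma\|_{\sigma'}+O(1)$ once $t\ge R$. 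Consequently each $\gamma$ contributes only to $O(1)$ values of $k$, each with $k\eps\le\|\gamma\|_{\sigma'}+C$. The small-$t$ range $t<R$ involves only finitely many $\gamma$ and contributes a finite amount.

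\textbf{Measure estimate.} By Lemma \ref{geod ball and shadows}, every $v=(x,y,s)\in A_{\gamma,k}$ with $k\eps\ge R$ has $x\in S_\eps(\gamma)$, while $y$ ranges over a compact projection and $s$ over an interval of bounded length. Since $G$ is locally bounded on the compact projection of $K_\eps$ to $M^{(2)}$,
$$\tilde m_\sigma(A_{\gamma,k})\le C\,\mu(S_\eps(\gamma))\le C'e^{-\delta_\sigma\|\gamma\|_\sigma}$$
by the Shadow Lemma (Theorem \ref{shadow lemma}). Summing over $k$ and $\gamma$ then gives
$$m_\sigma(\Omega_{\sigma'})\le C''\sum_{\gamma\in\Gamma_K}(\|\gamma\|_{\sigma'}+C)e^{-\delta_\sigma\|\gamma\|_\sigma}+\text{finite}<\infty.$$
The extra constant $C$ is harmless because $\sum_{\Gamma_K}e^{-\delta_\sigma\|\gamma\|_\sigma}$ is dominated by the weighted sum, as $\|\gamma\|_{\sigma'}\to\infty$ (Fact \ref{prop of cocycle}).

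\textbf{Main obstacle.} The delicate step is relating the enlarged $K$ (which needs nonempty interior, positive measure, and the properties of Section \ref{flow space}) to the given $\tilde K$. First-return elements for $K$ need not lie in $\Gamma_{\tilde K}$, so we must control $\Gamma_K$ by $\Gamma_{\tilde K}$. I would try to show that each $\gamma\in\Gamma_K$ factors as $f_1\gamma' f_2$ with $\gamma'\in\Gamma_{\tilde K}$ and $f_i$ in a finite set, with bounded multiplicity, using Lemma \ref{disjoint flow lines} and Corollary \ref{flow lines with same endpoint are close}. Fact \ref{prop of cocycle} then shows that finite perturbations change $\|\cdot\|_\sigma$ and $\|\cdot\|_{\sigma'}$ only by bounded amounts, which preserves finiteness of the series. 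Alternatively, one can simply take $W=\tilde K$ directly and reduce to the case where $\tilde K$ already has positive measure.
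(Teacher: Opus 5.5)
Your core argument is the paper's proof of Theorem \ref{pr implies finite}. It applies Kac's lemma to the ergodic, conservative flow on $\Omega_{\sigma'}$, lifts first-return sets to flow segments from the compact set to a translate, and reads off the return time as $\|\gamma\|_{\sigma'}+O(1)$ via the cocycle identity and shadows. It then uses the Shadow Lemma and bounded multiplicity in $k$. Two small repairs are needed:
\begin{itemize}
\item The cocycle identity gives $t=\sigma'(\gamma,\gamma^{-1}x^+)+O(1)$, which is the quantity Fact \ref{properties of shadows} controls, not $\sigma'(\gamma,x^+)+O(1)$.
\item The first-return element need not itself lie in $\Gamma_K$, since the segment over $[0,\eps]$ may cross other translates. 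You must write it as $g\gamma h$ with $\gamma\in\Gamma_K$ and $g,h$ in the finite set of elements whose translate of $K$ meets the $\eps$-flow-thickening of $K$, as in the claim inside Proposition \ref{est of magnitude}.
\end{itemize}
Your fallback, taking $W=\tilde K$ itself, is exactly what the paper does. The paper simply asserts that $\tilde K$ projects to a set of positive $m_\sigma$-measure.

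The genuine gap is your primary route: enlarging $\tilde K$ to $K$ and transferring summability from $\Gamma_{\tilde K}$ to $\Gamma_K$. For an arbitrary compact $\tilde K$ this transfer is false, not merely unproved. A flow segment from $K$ to $\gamma K$ that avoids other translates need not come anywhere near $\Gamma.\tilde K$.

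Concretely, take $\Gamma$ torsion-free and $\tilde K=\{v\}$, with the endpoints of $v$ not the fixed pair of a loxodromic element. Any $\gamma\in\Gamma_{\tilde K}$ fixes both endpoints, and a nontrivial such element would have to be loxodromic with exactly these fixed points. Hence $\Gamma_{\tilde K}=\{e\}$, and positive recurrence with respect to $\tilde K$ is just divergence. Your factorization $\gamma=f_1\gamma'f_2$ would then prove that every divergent torsion-free group has finite BMS measure. That fails, e.g.\ for $\mathbb{Z}$-covers of closed hyperbolic manifolds.

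This is consistent with the hypotheses of the lemmas you invoke. Lemma \ref{disjoint flow lines} and Corollary \ref{flow lines with same endpoint are close} require the \emph{smaller} of the two sets to reach every translate of itself along a forward flow line. Your setup $\tilde K\subset K\subset int(K')$ imposes that on $K$, not on $\tilde K$.

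The transfer does hold if $K_0\subset\tilde K\subset int(K)$, by Lemma \ref{flow line through translates}, Proposition \ref{containment of Gamma_K}, and Fact \ref{prop of cocycle} applied to both magnitudes. So you must add an explicit hypothesis on $\tilde K$: either $K_0\subset\tilde K$ (then enlarge), or positive measure of its image in $\Omega_{\sigma'}$ (then take $W=\tilde K$). ``Reduce to the case of positive measure'' is not a reduction but precisely this hypothesis, which the paper's proof also uses tacitly.
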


 We need the ergodicity of the flow $\psi^t$ on $\Omega_{\sigma'}$ with respect to the BMS measure $m_{\sigma}$ to prove the theorem.

\begin{prop}\label{erg of flow}
    If $\delta_{\sigma} <+\infty$ and $Q_{\Gamma,\sigma}(\delta_{\sigma})=+\infty$, then the flow $\psi^t$ on $\left(\Omega_{\sigma'}, m_{\sigma}\right)$ is conservative and ergodic.
\end{prop}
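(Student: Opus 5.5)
The plan is to deduce Proposition \ref{erg of flow} from the Hopf--Tsuji--Sullivan dichotomy, Theorem \ref{dichotomy}. The subtlety is that Theorem \ref{dichotomy} is stated for the flow space defined via the GPS system. Here the $\Gamma$ action on $\tilde{\Omega}_{\Gamma}$ is twisted instead by an arbitrary expanding cocycle $\sigma'$, so we must transfer ergodicity and conservativity of the $\Gamma$ action on $(M^{(2)},\nu)$ to the flow on $\Omega_{\sigma'}$.

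\emph{Ergodicity.} Since $Q_{\Gamma,\sigma}(\delta_\sigma)=+\infty$, Theorem \ref{dichotomy} gives that the $\Gamma$ action on $(\Lambda_\Gamma^{(2)},\nu)$, with $\nu=e^{\delta_\sigma G}\mu\otimes\bar\mu$, is ergodic and conservative. Let $f$ be a bounded $\psi^t$-invariant measurable function on $\Omega_{\sigma'}$, and lift it to a $\Gamma$-invariant and $\tilde\psi^t$-invariant function $\tilde f$ on $\tilde{\Omega}_{\Gamma}$. Flow invariance means $\tilde f(x,y,t)=F(x,y)$ for $\tilde m_\sigma$-almost every point. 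To be careful, invariance under all $t$ only holds almost everywhere, so we replace $\tilde f$ by a Fubini average $F(x,y)=\int_0^1\tilde f(x,y,t)\,dt$ and check that this equals $\tilde f$ almost everywhere. The $\Gamma$-invariance of $\tilde f$, together with $\gamma(x,y,t)=(\gamma x,\gamma y,t+\sigma'(\gamma,x))$, gives $F(\gamma x,\gamma y)=F(x,y)$ for $\nu$-almost every $(x,y)$; the shift in the last coordinate is harmless because $\tilde f$ does not depend on it. Ergodicity of $\Gamma$ on $(M^{(2)},\nu)$ then makes $F$ constant, so $f$ is constant $m_\sigma$-almost everywhere. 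This step does not involve $\sigma'$ beyond its being a cocycle, and does not use $\nu$'s finiteness.

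\emph{Conservativity.} This is the main obstacle. Here I would use that $\mu$ and $\bar\mu$ give full measure to $\Lambda_\Gamma^{con}$ (Theorem \ref{dichotomy}). Take $(x,y)$ with $x,y$ both conical. By conicality of $x$, there are distinct $\gamma_n$ with $\gamma_n x\to a$ and $\gamma_n y\to b\neq a$. Hence the points $\gamma_n\tilde\psi^{s}(x,y,0)$ with suitable $s$ lie in a fixed compact set $K\subset\tilde{\Omega}_{\Gamma}$. Concretely, choose $s_n$ with $s_n+\sigma'(\gamma_n,x)=0$; since $(\gamma_n x,\gamma_n y)$ stays in a compact subset of $\Lambda_\Gamma^{(2)}$, the point $\gamma_n(x,y,s_n)$ stays in a compact set. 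The key check is $s_n\to+\infty$ (and similarly $-\infty$ using conicality of $y$). For this, apply the fourth item of Fact \ref{prop of cocycle} together with $\gamma_n x\to a$: if $\sigma'(\gamma_n,x)$ were bounded below, then $d(\gamma_n x,\gamma_n)\to0$. But $\gamma_n\to b$, because $\gamma_n$ converges to $b$ uniformly off $x$, and this contradicts $a\neq b$. So $\sigma'(\gamma_n,x)\to-\infty$, i.e.\ $s_n\to+\infty$. Thus $m_\sigma$-almost every orbit in $\Omega_{\sigma'}$ returns infinitely often to the compact image of $K$ in both time directions. Since $m_\sigma$ is locally finite (proper discontinuity, Proposition 10.2 of \cite{2024arXiv240409713B}), the Poincaré recurrence criterion then shows the flow is conservative: there is no wandering set of positive measure, because almost every point returns to a finite-measure set infinitely often.

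Alternatively, conservativity also follows from Hopf's argument once ergodicity is known and the measure is not supported on a single dissipative orbit. I would still present the conical-recurrence argument above as the main route, since it directly uses the properties established earlier and avoids the subtleties of the twisted cocycle $\sigma'$.
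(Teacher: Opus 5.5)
Your ergodicity argument is the paper's. The paper lifts a $\psi^t$-invariant set to a $\Gamma$- and flow-invariant set $B\times\R$, then contradicts the ergodicity of $\Gamma$ on $(M^{(2)},\mu\otimes\bar{\mu})$ from Theorem \ref{dichotomy}. You run the same argument with invariant functions, and your Fubini averaging handles the fact that flow-invariance holds only almost everywhere, which the paper passes over when it writes the lift as $B\times\R$.

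The real difference is conservativity. The paper's written proof does not address it at all; it implicitly relies on the last clause of Theorem \ref{dichotomy}. You instead prove it directly for the $\sigma'$-twisted action, using $\mu(\Lambda_{\Gamma}^{con})=1$ and the fourth item of Fact \ref{prop of cocycle}. So your proposal covers more of the statement than the paper's proof does, and the sign check $\sigma'(\gamma_n,x)\to-\infty$ is correct.

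That part needs a few small repairs:
\begin{enumerate}
\item The compact set cannot be fixed in advance, because the separation $d(a,b)$ of the limit pair depends on $x$. Use the countable family $K_k=\{(x,y,0) : x,y\in\Lambda_{\Gamma},\ d(x,y)\geq 1/k\}$ and apply the criterion on each of the countably many pieces of full-measure set this produces.
\item The tool is Hopf's decomposition, not Poincar\'e recurrence. On the dissipative part, almost every orbit spends finite total time in any set of finite measure. Returns at times $s_n\to+\infty$ give infinite occupation time in the compact thickening $\bigcup_{u\in[0,1]}\tilde{\psi}^u(K_k)$. Its image in $\Omega_{\sigma'}$ has finite $m_{\sigma}$-measure, since $G$ is locally bounded and $\mu,\bar{\mu}$ are finite.
\item The step $\gamma_n\to b$ in $\Gamma\sqcup M$ uses the characterization of sequential convergence in the compactification from \cite[Proposition 2.3]{2024arXiv240409713B}. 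The paper does not restate it, so you should cite it explicitly.
\end{enumerate}
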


\begin{proof}
    Assume there exists a $\psi^t$-invariant set $A \subset \Omega_{\sigma'}$ with $m_{\sigma}(A) >0$ and $m_{\sigma}(A^c)>0$. After lifting we have $\Tilde{A}$ and $\Tilde{A^c} $ in $\Tilde{\Omega}_{\Gamma}  $ that are flow invariant and $\Gamma-$invariant. They can be written as $B \times \R$ and $B^c \times \R$ for some $B \subset \Lambda_{\Gamma}^{(2)}$ because of the flow invariance. Moreover, we have $$\tilde{m}_{\sigma}(\Tilde{A})  >0\text{ and } \tilde{m}_{\sigma}(\Tilde{A}^c) >0,$$ which implies $\mu\otimes \bar{\mu}(B)>0$ and  $\mu\otimes \bar{\mu}(B^c)>0$. However, by the dichotomy mentioned in Theorem \ref{dichotomy}, $\Gamma$ acts on $(M^{(2)}, \mu \otimes \bar{\mu})$ ergodically, which gives us a contradiction as $B$ is a $\Gamma$-invariant set in $\Lambda^{(2)} \subset M^{(2)}$ but $\mu\otimes\bar{\mu}(B)$ is neither 0 nor 1.
\end{proof}

The argument for the proof of Theorem \ref{pr implies finite} mainly follows the construction in Section 4 of \cite{2016arXiv161003255S}.
Taking $X = \Omega_{\sigma'}$, $\nu = m_{\sigma}$, by Proposition \ref{erg of flow}
the flow $\psi^t$ is ergodic and conservative on $(\Omega_{\sigma'}, m_{\sigma})$. Moreover, we can project $\tilde{K}$ to a compact set $K \subset \Omega_{\sigma'}$ with positive $m_{\sigma}$ measure, and apply the Kac's lemma to $(\Omega_{\sigma'}, m_{\sigma}, \psi^t)$ and $K$, and get \begin{equation}
    m_{\sigma}(\Omega_{\sigma'}) = \sum_{k \in \N} k \cdot m_{\sigma}(\{x \in K_{\eps}\text{ }|\text{ }\tau_{\eps, K}(x) \in [k\eps, (k+1)\eps]\}).
\end{equation}

    For any $\gamma \in \Gamma$, define  $$\mathcal{U}_{\tilde{K}, \gamma} := \{v \in \tilde{K} \text{ }|\text{ } \exists t \geq 0 \text{ with } \Tilde{\psi}^t(v) \in \gamma.\tilde{K}\}.$$ Lemma \ref{geod ball and shadows} essentially states that two copies of $\tilde{ K}$ that are ``far" enough are contained in the shadows of each other. We then have the following.

\begin{lemma}\label{bound geod ball by shadows}
   For any $\eps >0$ sufficiently small, let $R>0$ and $\gamma \in \Gamma$ be such that Lemma \ref{geod ball and shadows} holds, we have  $$\mathcal{U}_{\tilde{K}, \gamma} \subset S_{\eps}(\gamma)\times \gamma S_{\eps}(\gamma^{-1})\times I$$ for some bounded interval $I \subset \R$ independent of $\gamma$.
\end{lemma}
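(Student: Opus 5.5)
We need to show that every $v=(x,y,s)\in\mathcal{U}_{\tilde K,\gamma}$ has $x\in S_\eps(\gamma)$, $y\in\gamma S_\eps(\gamma^{-1})$, and $s$ in a bounded interval. Here $x=v^+$ is the forward endpoint and $y=v^-$ the backward one, following the paper's conventions. The plan is to handle the three coordinates separately, using Lemma \ref{geod ball and shadows} twice: once forward along the flow, and once backward after applying $\gamma^{-1}$. There is a subtlety: the lemma only applies when the hitting time $t\ge R$. The finitely many $\gamma$ (equivalently, small $t$) for which $t<R$ can occur will be treated separately. For this I assume, as in the paper's intended usage of $\mathcal U_{\tilde K,\gamma}$ in the Kac's lemma computation, that the relevant $\gamma$ have hitting time at least $R$, or I absorb the finitely many exceptions by shrinking $\eps$ or enlarging constants. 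Since $\Gamma$ acts properly discontinuously on $\tilde\Omega_\Gamma$ (Proposition 10.2 of \cite{2024arXiv240409713B}), only finitely many $\gamma$ satisfy $\gamma.\tilde K\cap\tilde\psi^{[0,R]}(\tilde K)\neq\emptyset$.

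\textbf{The interval $I$.} This step is immediate, since $\tilde K$ is compact and so its projection to the $\R$-coordinate lies in a bounded interval $I$. Note that $\mathcal U_{\tilde K,\gamma}\subset\tilde K$, so $I$ is independent of $\gamma$.

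\textbf{The forward coordinate.} Take $v=(x,y,s)\in\mathcal{U}_{\tilde K,\gamma}$ with $\tilde\psi^t(v)\in\gamma.\tilde K$ and $t\ge R$. Lemma \ref{geod ball and shadows} gives $x\in S_\eps(\gamma)$ directly.

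\textbf{The backward coordinate.} Set $w:=\gamma^{-1}\tilde\psi^t(v)\in\tilde K$. Then $\tilde\psi^{-t}(w)=\gamma^{-1}v\in\gamma^{-1}.\tilde K$. I apply a time-reversed version of Lemma \ref{geod ball and shadows}: if $w\in\tilde K$ and $\tilde\psi^{-t}(w)\in g.\tilde K$ with $t\ge R$, then the backward endpoint of $w$ lies in a shadow $S_\eps(g)$. With $g=\gamma^{-1}$, this gives $\gamma^{-1}y\in S_\eps(\gamma^{-1})$, that is, $y\in\gamma S_\eps(\gamma^{-1})$.

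The time-reversed lemma is proved by the same contradiction argument as Lemma \ref{geod ball and shadows}, with the roles of the two endpoints and the signs of time swapped. In the GPS system the backward direction is governed by $\bar\sigma$. Its magnitude is comparable to $\|\gamma^{-1}\|_\sigma$ by Fact \ref{3.4}, and it is expanding, so the fourth item of Fact \ref{prop of cocycle} is still available. The contradiction arises exactly as before. The time shift along the flow line equals $\sigma'(g,\cdot)$ evaluated at the forward endpoint, up to bounded error from the compactness of $\tilde K$. If the backward endpoint were not in the shadow, the convergence dynamics would force the backward endpoint of $g.w_n$ and the forward endpoint to converge to distinct points. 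That would make $\sigma'$ tend to $-\infty$ while the elapsed time tends to $+\infty$.

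The main obstacle I expect is this time-reversal step, because the shift in the $\R$-coordinate is defined via $\sigma'$ at the forward endpoint, not the backward one. I will therefore carry out the argument directly, not by a formal symmetry. I pass to subsequences so that $\gamma_n^{\pm1}$ converge, and use that $\tilde K$ keeps the two endpoints uniformly apart. Combining the convergence group property with item four of Fact \ref{prop of cocycle} then yields the divergence contradiction, mirroring the proof of Corollary \ref{cor of geod ball and shadow}.
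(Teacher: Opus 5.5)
Your proposal follows the paper's proof: the forward endpoint comes from Lemma~\ref{geod ball and shadows}, the backward endpoint from rerunning that argument on $\gamma^{-1}\tilde\psi^t(v)$ in reversed time, and $I$ from compactness of $\tilde K$; your handling of the finitely many $\gamma$ with short hitting time is more careful than the paper's. For the backward step, $\bar\sigma$ plays no role (the action only uses $\sigma'$), and as you suspected the literal mirror of the contradiction does not close; instead, item four of Fact~\ref{prop of cocycle} applied to $\gamma$ itself settles it at once: $\sigma'(\gamma,w^+)=t+O(1)$ forces $d(x,\gamma)\to 0$ as $t\to\infty$, so $d(y,\gamma)\ge\eps$ by endpoint separation on $\tilde K$, which is exactly $y\in\gamma S_\eps(\gamma^{-1})$.
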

\begin{proof}
    For any $x \in \mathcal{U}_{\tilde{K},\gamma}$, let $x = (x^+,x^-,s).$ Lemma \ref{geod ball and shadows} implies $x^+ \in S_{\eps}(\gamma).$ Analogously, doing the same argument in the proof of Lemma \ref{geod ball and shadows} for $\gamma^{-1}\Tilde{\psi}^t(x) \in \mathcal{U}_{\tilde{K}, \gamma^{-1}}$ would show $\gamma^{-1} x^- \in S_{\eps}(\gamma^{-1})$. The third parameter in $\R$ is bounded because $K$ is compact.
\end{proof}

We now estimate the measure of $\mathcal{U}_{\tilde{K},\gamma}$.

\begin{lemma}\label{measure of geod ball}
    Let $\eps >0$ be sufficiently small, and $R$ be large enough such that Lemma \ref{geod ball and shadows} holds. There exists $C>0$ such that $$\tilde{m}_{\sigma}(\mathcal{U}_{\tilde{K},\gamma }) \leq C\cdot \exp{\left( -\delta_{\sigma} \cdot ||\gamma||_{\sigma}\right)} $$ for any $\gamma \in \Gamma$ such that there exists $v\in \mathcal{U}_{\tilde{K},\gamma}$ with $\Tilde{\psi}^t(v) \in \gamma \tilde{K}$ for some $t \geq R$. 
    \end{lemma}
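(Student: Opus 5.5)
The plan is to combine Lemma \ref{bound geod ball by shadows} with the Shadow Lemma (Theorem \ref{shadow lemma}) and the local boundedness of $G$. By Lemma \ref{bound geod ball by shadows},
$$\mathcal{U}_{\tilde{K},\gamma}\subset S_{\eps}(\gamma)\times \gamma S_{\eps}(\gamma^{-1})\times I,$$
and $\mathcal{U}_{\tilde K,\gamma}\subset \tilde K$. On this set we therefore have
$$\tilde m_\sigma(\mathcal{U}_{\tilde K,\gamma})\le |I|\cdot \sup_{\tilde K} e^{\delta_\sigma G}\cdot \mu(S_\eps(\gamma))\cdot \bar\mu(\gamma S_\eps(\gamma^{-1})).$$
The supremum is finite because the projection of $\tilde K$ to $\Lambda_\Gamma^{(2)}$ is a compact subset of $M^{(2)}$ and $G$ is locally bounded. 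To be careful, I would first intersect the product set with the projection of $\tilde K$ before integrating the density $e^{\delta_\sigma G}$, so the bound on $G$ is used only on a compact set.

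The first factor is handled directly. The Shadow Lemma for $\mu$ gives $\mu(S_\eps(\gamma))\le c\,e^{-\delta_\sigma\|\gamma\|_\sigma}$.

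It then remains to show that $\bar\mu(\gamma S_\eps(\gamma^{-1}))$ is bounded by a constant independent of $\gamma$. This is immediate, since $\bar\mu$ is a probability measure. So the decay comes entirely from the $x^+$ coordinate, and we get $C=|I|\,c\,\sup_{\tilde K}e^{\delta_\sigma G}$.

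As a sanity check, one could instead aim for the sharper bound
$$\bar\mu(\gamma S_\eps(\gamma^{-1}))=\int_{S_\eps(\gamma^{-1})}e^{-\delta_\sigma\bar\sigma(\gamma,y)}\,d\bar\mu(y).$$
On the shadow of $\gamma^{-1}$, $\bar\sigma(\gamma,y)\approx -\|\gamma^{-1}\|_{\bar\sigma}$, which by \eqref{1} is about $-\|\gamma\|_\sigma$. This gives a quantity of order $1$, consistent with the crude bound, so the sharper estimate is unnecessary.

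The main obstacle is making sure that the Shadow Lemma applies with the same $\eps$ as in Lemma \ref{geod ball and shadows}. This requires $\eps$ small enough for both statements, which is allowed by the hypothesis "$\eps$ sufficiently small". It also requires that the hypothesis $t\ge R$ is exactly what Lemma \ref{bound geod ball by shadows} needs. The only remaining subtlety is that $\mathcal{U}_{\tilde K,\gamma}$ might contain points reaching $\gamma\tilde K$ only at times $t<R$. I would handle these by noting that the lemma's hypothesis is applied to all $v$ in $\mathcal{U}_{\tilde K,\gamma}$ by possibly enlarging $R$ via compactness. Alternatively, one can observe that if some $v$ reaches $\gamma\tilde K$ at a time below $R$, then $\gamma$ lies in a finite set (by proper discontinuity), and for that finite set the bound is trivial after enlarging $C$.
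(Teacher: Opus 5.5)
Your proposal is correct and follows the paper's proof essentially step for step. You bound $e^{\delta_\sigma G}$ on the compact projection of $\tilde K$, use Lemma \ref{bound geod ball by shadows} to place $\mathcal{U}_{\tilde K,\gamma}$ inside a product of shadows, apply the Shadow Lemma to $\mu(S_\eps(\gamma))$, and bound the $\bar\mu$ factor by $1$. You also note that the $\gamma$ for which some point of $\mathcal{U}_{\tilde K,\gamma}$ reaches $\gamma\tilde K$ before time $R$ form a finite set, by proper discontinuity, and so can be absorbed into $C$; this is a small piece of rigor the paper leaves implicit.
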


    \begin{proof}
    By construction of the BMS measure $\tilde{m}_{\sigma}$, we have 
\begin{equation*}
\tilde{m}_{\sigma}\left(\mathcal{U}_{\tilde{K},\gamma} \right) \leq \int_{v= (v^+,v^-, s) \in \mathcal{U}_{\tilde{K},\gamma} } \exp{(\delta_{\sigma} \cdot  G(v^+, v^-))}d\mu\otimes d\bar{\mu} \otimes dt(v).
\end{equation*}

The function $G:M^{(2)} \rightarrow \R$ is bounded over the set $\{(v^+,v^-): v= (v^+,v^-, s) \in \tilde{K} \} \subset M^{(2)}$ because $\tilde{K}$ is compact. We can rewrite the equation above as \begin{equation}\label{5}
    \tilde{m}_{\sigma}\left(\mathcal{U}_{\tilde{K},\gamma} \right) \leq C'\cdot  \int_{v= (v^+,v^-, s) \in \mathcal{U}_{\tilde{K},\gamma} } d\mu\otimes d\bar{\mu} \otimes dt(v)
\end{equation} for some $C'>0$.
    
        According to Corollary \ref{bound geod ball by shadows}, Equation \ref{5} can be turned into        
        \begin{equation*}
            \tilde{m}_{\sigma}\left(\mathcal{U}_{\tilde{K},\gamma} \right) \leq C'\cdot  Leb(I) \cdot \mu\left(S_{\eps}(\gamma)\right)  \cdot \bar{\mu}\left(\gamma S_{\eps}(\gamma^{-1})\right)
        \end{equation*}
    

 Now $\bar{\mu}\left(\gamma S_{\eps}(\gamma^{-1})\right) \leq 1$ as $\bar{\mu}$ is a probability measure on $M$, and the Shadow Lemma (Theorem \ref{shadow lemma}) implies $\mu(S_{\eps}(\gamma)) \leq c\cdot \exp{(-\delta \cdot ||\gamma||_{\sigma})}$. Then we get $$\tilde{m}_{\sigma}\left(\mathcal{U}_{\tilde{K},\gamma} \right)\leq C\cdot \exp{(-\delta_{\sigma} \cdot ||\gamma||_{\sigma})}$$ with $C= c\cdot C'.$
\end{proof}

In order to make use of the Kac's Lemma to prove Theorem \ref{pr implies finite}, we need to examine the properties of the set $A_{K,k,\eps}:=\{x \in K_{\eps}\text{ }|\text{ }\tau_{\eps, K}(x) \in [k\eps, (k+1)\eps]\}$, where $\tau_{\eps, K}$ is as defined in Equation \ref{16}

\begin{prop}\label{est of magnitude}
    For all $\eps >0$ small enough, there exist $E> 0$ and a finite set $S \subset \Gamma$ such that the following hold.
   
        For all $k$ large enough, all $x\in K$ with $\tau_{\eps, K} (x) \in [k\eps,(k+1)\eps]$, and any lift $\Tilde{x} \in \tilde{K}$ of $x$, there exist $g,h \in S$ and $\gamma \in \Gamma_{K} $ such that $\Tilde{x} \in \mathcal{U}_{\tilde{K}, g\gamma h}$ and \begin{equation}\label{2}
            k\eps-E\leq ||\gamma||_{\sigma'}\leq (k+1)\eps +E.     
        \end{equation}
\end{prop}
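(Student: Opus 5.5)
Fix a lift $\tilde{x}\in\tilde{K}$ of $x$ and set $\tau:=\tau_{\eps,K}(x)\in[k\eps,(k+1)\eps]$. Since $\psi^{\tau}(x)\in K$, there is $\gamma_0\in\Gamma$ with $\tilde{\psi}^{\tau}(\tilde{x})\in\gamma_0.\tilde{K}$. Because $\tau$ is the first return time after $\eps$, the segment $\tilde{\psi}^{[\eps,\tau)}(\tilde{x})$ avoids $\Gamma.\tilde{K}$. The short piece $\tilde{\psi}^{[0,\eps]}(\tilde{x})$ may still meet copies $h.\tilde K$, but only for $h$ in a finite set: $\tilde{\psi}^{[0,\eps]}(\tilde{K})$ is compact and $\Gamma$ acts properly discontinuously on $\tilde{\Omega}_{\Gamma}$ by Proposition 10.2 of \cite{2024arXiv240409713B}. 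So let $s_0\in[0,\eps]$ be the last time the flow line lies in some $h.\tilde K$, with $h$ in this finite set $S_0$. Then the segment from $\tilde{\psi}^{s_0}(\tilde x)\in h.\tilde K$ to $\tilde\psi^{\tau}(\tilde x)\in \gamma_0.\tilde K$ meets $\Gamma.\tilde K$ only in its endpoint copies. Translating by $h^{-1}$ shows $h^{-1}\gamma_0\in\Gamma_{\tilde K}$.

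Set $\gamma:=h^{-1}\gamma_0$, so $\gamma_0=h\gamma$. Then $\tilde{x}\in\mathcal{U}_{\tilde K,h\gamma}$, which is the required form $g\gamma h'$ with $g=h\in S_0$ and $h'=\mathrm{id}$. Put $S:=S_0\cup S_0^{-1}\cup\{\mathrm{id}\}$. If the paper's $\Gamma_K$ is meant with a slightly enlarged set (for instance $K'$ as in Lemma \ref{disjoint flow lines}), a right factor $h'$ appears symmetrically at the $\gamma_0.\tilde K$ end. Either way the combinatorial part is settled by proper discontinuity.

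It remains to compare $\|\gamma\|_{\sigma'}$ with $\tau$. Write $\tilde{x}=(x^+,x^-,s)$ and $\tilde{\psi}^{\tau}(\tilde{x})=\gamma_0.w$ with $w=(w^+,w^-,s_w)\in\tilde K$. Then
\[
s+\tau=s_w+\sigma'(\gamma_0,w^+),
\]
so $|\tau-\sigma'(\gamma_0,w^+)|$ is bounded by the diameter of the $\R$-projection of $\tilde K$.

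The main step is to show $\sigma'(\gamma_0,w^+)=\|\gamma_0\|_{\sigma'}+O(1)$. This is the expanding property of $\sigma'$ (Fact \ref{prop of cocycle}), provided $d(w^+,\gamma_0^{-1})>\eps'$ for a uniform $\eps'$. That is exactly what Lemma \ref{geod ball and shadows} provides, applied with $\sigma'$ in its proof: once $\tau\geq R$, which holds for $k$ large, $x^+=\gamma_0 w^+\in S_{\eps'}(\gamma_0)$, i.e. $w^+\notin B_{\eps'}(\gamma_0^{-1})$. So $|\tau-\|\gamma_0\|_{\sigma'}|\le C$.

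Finally, by the first item of Fact \ref{prop of cocycle} for the finite set $S$, $\bigl|\|\gamma_0\|_{\sigma'}-\|\gamma\|_{\sigma'}\bigr|\le D$. Taking $E:=C+D+\text{const}$ yields \eqref{2}.

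I expect the obstacle to be uniformity. The constant in the shadow step must not depend on $x$, which relies on the compactness argument in Lemma \ref{geod ball and shadows}. The set $S$ must also be independent of $k$, which follows from proper discontinuity applied to the compact set $\tilde\psi^{[-\eps,\eps]}(\tilde K)$.
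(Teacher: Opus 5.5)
Your proof is correct and follows the paper's route. You lift the return segment, use proper discontinuity on a short flow-neighbourhood of $\tilde K$ to peel off finitely many translates and land in $\Gamma_{\tilde K}$, and bound $|\tau-\|\gamma_0\|_{\sigma'}|$ via the $\R$-coordinate of $\tilde K$, Lemma \ref{geod ball and shadows} with the expanding property, and Fact \ref{prop of cocycle}. The one minor difference is that you use that $\tau$ is an exact first-return time, so the flow line avoids $\Gamma.\tilde K$ on all of $[\eps,\tau)$ and only the initial end needs trimming (right factor trivial); the paper only uses avoidance on $[\eps,T-\eps]$ and therefore trims at both ends with $g_u$ and $g_v$.
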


\begin{proof}
    By definition of $\tau_{\eps,K}(x)$, there exists $T \in [k\eps, (k+1)\eps]$ such that $\psi^T(x)\in K$ and $\psi^s(x) \notin K$ for all $s \in [\eps, T-\eps]$. Lifting everything up to $\Tilde{\Omega}_{\Gamma}$ gives a flow line $\Tilde{\psi}^t(\Tilde{x})=:a_t$ that intersects $\Tilde{K}$ and $\gamma_0.\Tilde{K} $ for some $\gamma_0 \in \Gamma$, and $a_t|_{[\eps, T-\eps]}$ does not intersect $\Gamma.\Tilde{K}$. In particular, $\Tilde{x} \in \mathcal{U}_{\Tilde{K},\gamma_0}$.

    We now need to replace $\gamma_0$ by some $\gamma \in \Gamma_{\Tilde{K}}, $ where $\Gamma_{\Tilde{K}}$ is the subset of $\Gamma$ outside of $\Tilde{K} $ as defined before. Let $$\Tilde{K}_{[-\eps, \eps]} := \{\Tilde{x} \in \Tilde{\Omega}_{\Gamma} \text{ }|\text{ } \Tilde{\psi}^s(\Tilde{x}) \in \Tilde{K} \text{ for some } s \in [-\eps, \eps]\}.$$ Since $\Tilde{K}$ is compact and $\Gamma$ acts on $\Tilde{\Omega}_{\Gamma}$ properly discontinuously, the set $$S:= \{g \in \Gamma\text{ }|\text{ }g.\Tilde{K} \cap \Tilde{K}_{[-\eps, \eps]} \neq \emptyset \}$$ is finite. 

\begin{claim}
        There exist $g, h \in S$ such that $\gamma_0= g\gamma h$ for some $\gamma \in \Gamma_{\Tilde{K}}$
\end{claim}

\begin{claimproof}
    Let $$u:= \sup \{s\in [0, \eps]\text{ }|\text{ } a_s \in S.\Tilde{K}\}$$ and $$v := \inf  \{s\in [T-\eps, T]\text{ }|\text{ } a_s \in \gamma_0 S.\Tilde{K}\} .$$ One can then find $g_u, g_v \in S $ such that $a_u \in g_u.\Tilde{K}$ and $a_v \in \gamma_0 g_v.\Tilde{K}$. By the definition of $u$ and $v$, $a_t|_{[u,v]}$ does not intersect $\Gamma.\Tilde{K}$, this implies $\gamma := g_u^{-1}\gamma_0g_v \in \Gamma_{\Tilde{K}} $ and proves the claim. 
\begin{figure}[htp]
    \centering
    \includegraphics[clip,trim={5cm 8cm 5cm 5cm},width = 18cm]{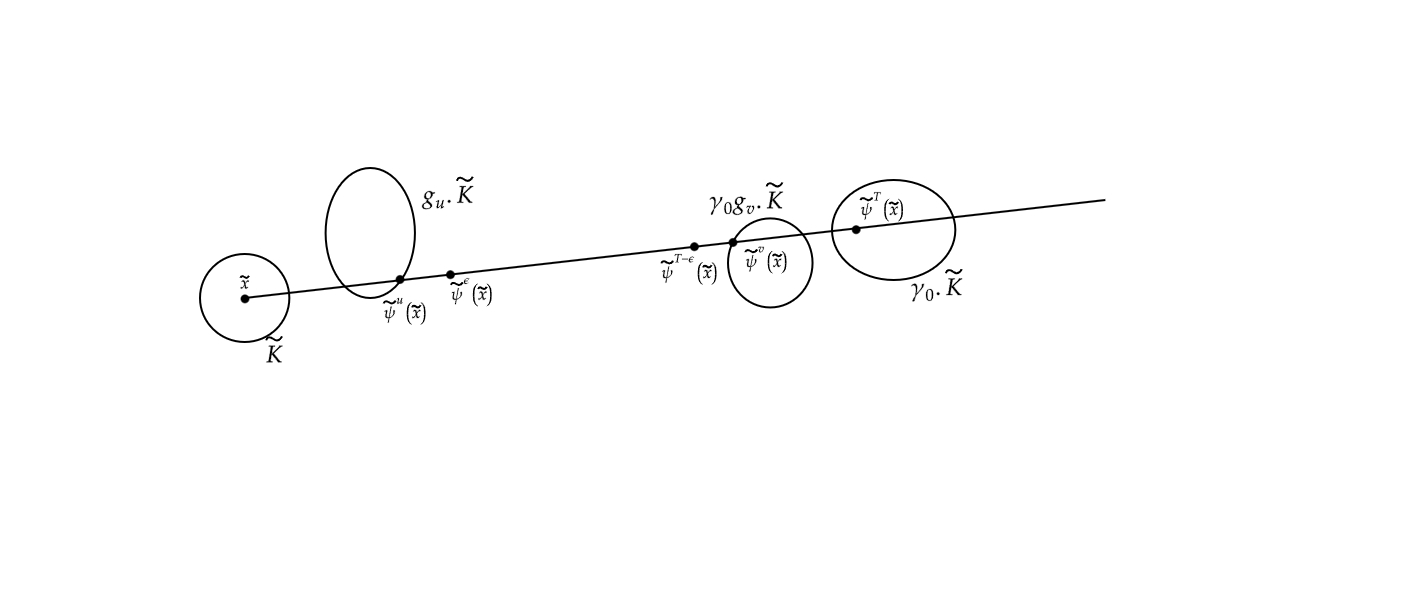}
\end{figure}
\end{claimproof}

    By construction $\Tilde{x} \in \mathcal{U}_{\Tilde{K},g_u\gamma g_v^{-1}}$. Then all that is left to prove Proposition \ref{est of magnitude} is to verify Equation \ref{2}. Since $S$ is finite, Fact \ref{prop of cocycle} implies that there exists $D$ such that \begin{equation}\label{3}
       ||\gamma_0||_{\sigma'}-2D \leq ||\gamma||_{\sigma'}\leq ||\gamma_0||_{\sigma'}+2D.
    \end{equation}

    Let $\Tilde{x}:= (\Tilde{x}^+,\Tilde{x}^-,t_0),$ and respectively $\Tilde{a}_t := (\Tilde{x}^+,\Tilde{x}^-,t_0+t).$ Moving $\Tilde{a}_T$ back to $\Tilde{K}$ by applying $\gamma_0^{-1}$ gives $$\gamma_0^{-1}.\Tilde{a}_T= \left(\gamma_0^{-1}\Tilde{x}^+,\gamma_0^{-1}\Tilde{x}^-, t_0+T+\sigma'(\gamma_0^{-1}, \Tilde{x}^+)\right) \in \Tilde{K}.$$
    Since $\Tilde{K}$ is a compact set in $\Tilde{\Omega}_{\Gamma}$, and $\Tilde{a}_0$ and $\gamma_0^{-1}\Tilde{a}_T$ are both in $\Tilde{K}$, $|t_0-\left(t_0+T+\sigma'(\gamma_0^{-1}, \Tilde{x}^+)\right)|=|T+\sigma'(\gamma_0^{-1}, \Tilde{x}^+)|= |T-\sigma'(\gamma_0, \gamma_0^{-1})\Tilde{x}^+|$ is bounded above by some $E'>0$.

Lemma \ref{geod ball and shadows} shows that $\Tilde{x}^+ \in S_{\eps}(\gamma_0)$. Let $C'>0 $ be the constant in Fact \ref{properties of shadows}. We then have $$||\gamma_0||_{\sigma'}-(E'+ C') \leq T\leq ||\gamma_0||_{\sigma'}+E'+ C'.$$ Equation \ref{3} then implies $$||\gamma||_{\sigma'}-(2D+E'+C') \leq T\leq ||\gamma||_{\sigma'}+2D+E'+ C'.$$ Furthermore, by construction, $T \in [k\eps, (k+1)\eps], $
and we have $$k \eps -(2D+E'+ C')\leq  ||\gamma||_{\sigma'} \leq (k+1)\eps+2D+E'+ C'.$$
Picking $E = 2D+E'+ C'$ gives us the desired inequalities to conclude the proof of Proposition \ref{est of magnitude}.
\end{proof}

\begin{proof}[Proof of Theorem \ref{pr implies finite}]
    We can now prove the theorem. For all $x \in A_{K,k, \eps}$ with large enough $k$, there is some $s \in [0,\eps]$ such that $x_s:=\psi^{s}(x) \in K$. We have $\tau_{\eps,K}(x_s) \in [(k-1)\eps, (k+1)\eps]$, and  it has a lift $\Tilde{x}_s$ in $\Tilde{K}$. Proposition \ref{est of magnitude} ensures that there exists a finite set $S$ such that $\Tilde{x}_s \in \mathcal{U}_{\Tilde{K}, g\gamma h}$ for some $\gamma \in \Gamma_{\Tilde{K}}$ and $g,h \in S$. This means for any lift $\Tilde{x}$ of $x$ into $\Tilde{K}_{\eps}$ satisfies \begin{equation}\label{6}
        \Tilde{x} \in \bigcup_{\gamma \in \Gamma_{\Tilde{K},k}} \bigcup_{g,h \in S} \bigcup_{s \in [0,\eps]} \Tilde{\psi}^s\left(\mathcal{U}_{\Tilde{K},g\gamma h}\right)
    \end{equation}
where $\Gamma_{\Tilde{K},k}:= \{g \in \Gamma_{\Tilde{K}} \text{ }|\text{ } (k-1)\eps -E\leq ||g||_{\sigma'} \leq (k+1)\eps +E\}$ for $E$ in Proposition \ref{est of magnitude}.

Then the term $m_{\sigma}(A_{K,k,\eps})$ that appears in the left hand side of the Kac's lemma can be rewritten as \begin{equation*}
    m_{\sigma}\left(A_{K,k,\eps}\right) \leq \sum_{g,h \in S } \sum_{\gamma \in \Gamma_{\Tilde{K},k}} \Tilde{m}_{\sigma}\left(\bigcup_{s\in[0,\eps]}\Tilde{\psi}^s\left(\mathcal{U}_{\Tilde{K},g\gamma h}\right) \right)
\end{equation*}
using Equation \ref{6}. Moreover, $\bigcup_{s\in[0,\eps]}\Tilde{\psi}^s\left(\mathcal{U}_{\Tilde{K},g\gamma h}\right)$ is contained in $S_{\eps}(g\gamma h)\times S_{\eps}\left((g\gamma h)^{-1}\right) \times I'$ with $I'$ a slightly enlarged interval than $I$ as in Corollary \ref{bound geod ball by shadows}. Then using Lemma \ref{measure of geod ball}, there exists $C>0$ such that $$m_{\sigma}\left(A_{K,k,\eps}\right) \leq C \cdot  \sum_{g,h \in S } \sum_{\gamma \in \Gamma_{\Tilde{K},k}} \exp{\left(-\delta_{\sigma} \cdot ||g \gamma h||_{\sigma}\right)} .$$ 

Since $S$ is finite, Fact \ref{prop of cocycle} again implies that $||\gamma||_{\sigma}$ is bounded away from $||g \gamma h||_{\sigma}$. Moreover, we have
\begin{equation}\label{7}
    m_{\sigma}\left(A_{K,k,\eps}\right) \leq C' \cdot  \sum_{\gamma \in \Gamma_{\Tilde{K},k}} \exp{\left(-\delta_{\sigma} \cdot || \gamma ||_{\sigma}\right)}
\end{equation}
    for some $C'>0$.

Using Equation \ref{7} we can rewrite the sum in Kac's Lemma as $$m_{\sigma}\left(\Omega_{\sigma'}\right)=\sum_{k\geq 0}k \cdot m_{\sigma}(A_{K,k,\eps})\leq  C_0 + C'\cdot \sum_{k \geq k_0} k\cdot\sum_{\gamma \in \Gamma_{\Tilde{K},k}} \exp{\left(-\delta_{\sigma} \cdot || \gamma ||_{\sigma}\right)}$$ where for $k\geq k_0$ all of the above arguments hold and $C_0$ the sum of the initial part of the series when $k <k_0$.

By definition of $\Gamma_{\Tilde{K},k}$, we have $(k-1)\eps-E \leq ||\gamma||_{\sigma'}$ for all $\gamma \in \Gamma_{\Tilde{K},k}.$ We can again pick $ k_0'$ large enough such that $(k-1)\eps-R >\frac{k\eps}{2}$ for all $k \geq k_0'$, then the equation above becomes \begin{equation}\label{8}
    m_{\sigma}\left(\Omega_{\sigma'}\right)= \sum_{k\geq 0}k \cdot m_{\sigma}(A_{K,k,\eps})\leq  C_0' + \frac{2C'}{\eps}\cdot \sum_{k \geq k_0'} \sum_{\gamma \in \Gamma_{\Tilde{K},k}} ||\gamma||_{\sigma'}\cdot \exp{\left(-\delta_{\sigma} \cdot || \gamma ||_{\sigma}\right)}.
\end{equation}

For any $\gamma \in \Gamma_{\Tilde{K}}$, the element $\gamma $ is in $\Gamma_{\Tilde{K},k}$ if and only if $$\frac{||\gamma||_{\sigma'}-E-\eps}{\eps} \leq k \leq \frac{||\gamma||_{\sigma'}+E+\eps}{\eps},$$ i.e. there are at most $\frac{2(E+\eps)}{\eps}$ many choices for $k$ such that $\gamma \in \Gamma_{\Tilde{K},k}$. Then Equation \ref{8} becomes $$m_{\sigma}\left(\Omega_{\sigma'}\right)= \sum_{k\geq 0}k \cdot m_{\sigma}(A_{K,k,\eps})\leq  C_0' +C'' \cdot \sum_{\gamma \in \Gamma_{\Tilde{K}} } ||\gamma||_{\sigma'}\cdot \exp{\left(-\delta_{\sigma} \cdot || \gamma ||_{\sigma}\right)}$$ for $C'':= \frac{4C'(E+\eps)}{\eps^2}$. The positive recurrence condition ensures the sum in the above inequality is finite, and hence proves that $m_{\sigma}\left(\Omega_{\sigma'}\right)$ is finite.
\end{proof}

\section{Strongly Positive Recurrence}\label{spr}

In this section, we give a definition of the property that we call \emph{strongly positive recurrent} for a convergence group $\Gamma \subset Homeo(M)$. Let $\Tilde{\Omega}_{\Gamma}$ be the flow space as defined in Section \ref{flow space}, $(\sigma, \bar{\sigma}, G)$ be a GPS system associated with the pair $(M, \Gamma)$. Fix an expanding continuous cocycle $\sigma'$, and throughout this section we consider the $\Gamma$ action on $\Tilde{\Omega}_{\Gamma}$ induced by $\sigma'$.

\subsection{Subset Outside of A Given Compact Set}\label{Gamma_K}

For any compact set $K \subset \Tilde{\Omega}_{\Gamma}$, let $\Gamma_K$ denote the subset of $\Gamma$ outside of $K$ as defined in Section \ref{finite bms}. We want to show that given two ``large" enough compact sets $K_1, K_2 \subset \Tilde{\Omega}_{\Gamma}$, the sets $\Gamma_{K_1}$ and $\Gamma_{K_2}$ are related in a nice way. 

In order to make the large enough condition of the compact sets precise, we need the following property of the convergence group $\Gamma$.
\begin{fact}[see e.g. Lemma 2.4 \cite{2024arXiv240409713B}]\label{approx loxo}
    There exist $\eps_0 >0$ and a finite set $F $ such that for any $\gamma \in \Gamma$, there exists $f \in F$ such that $\gamma f$ is loxodromic and $$\min \{d\left((\gamma f)^+, (\gamma f)^-\right), d\left(\gamma f, (\gamma f)^-\right), d\left(\gamma f, (\gamma f)^+\right)\}>\eps_0,$$ where $d$ is the compactifying metric on $\Gamma \sqcup M$ as defined in Section \ref{cont cocycle} and $(\gamma f)^{\pm}$ are the fixed points of $\gamma f$.
\end{fact}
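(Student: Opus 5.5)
The plan is to use the non-elementary hypothesis to fix a finite ``Schottky-type'' family of loxodromic elements, and then run a compactness argument on $\Gamma \sqcup M$. First, since $\Gamma$ is non-elementary, it contains loxodromic elements with pairwise disjoint fixed point pairs. I would choose three loxodromics $a_1,a_2,a_3$ whose six fixed points $a_i^{\pm}$ are pairwise distinct, and let $\eta>0$ be a lower bound on their mutual distances. The finite set $F$ will consist of high powers $a_i^{\pm N}$ for a large $N$ fixed later, together with finitely many extra elements handling the bounded part of $\Gamma$.

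The key step is the following. Given $\gamma \in \Gamma$ far out, view $\gamma$ and $\gamma^{-1}$ as points of the compactification $\Gamma\sqcup M$. By pigeonhole, at least one index $i$ has both $a_i^{+}$ and $a_i^{-}$ at distance $\geq \eta/3$ from both $\gamma$ and $\gamma^{-1}$, since each of these two points can be $\eta/3$-close to at most one of the six fixed points. Take $f=a_i^{N}$.

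We then check loxodromy by ping-pong, using the convergence dynamics. The map $f$ sends $M\setminus B_{\eta/6}(a_i^-)$ into a tiny ball around $a_i^+$. That ball avoids $B_{\eps}(\gamma^{-1})$, so $\gamma$ maps it into the shadow $S_{\eps}(\gamma)$. By Fact \ref{properties of shadows}, this shadow has small diameter and sits near $\gamma$, which is far from $a_i^-$. Hence $\gamma f$ maps a closed set $U$ strictly into a small subset of itself, and symmetrically $(\gamma f)^{-1}=f^{-1}\gamma^{-1}$ contracts a set near $a_i^-$. The convergence property then forces the iterates to converge, so $\gamma f$ is loxodromic with $(\gamma f)^+$ within small distance of $\gamma$ and $(\gamma f)^-$ within small distance of $a_i^-$. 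This gives $d((\gamma f)^+,(\gamma f)^-)\geq \eta/3-o(1)$, and the point $\gamma f$ of $\Gamma\sqcup M$ lies near $\gamma$ by Fact \ref{prop of cocycle}-type continuity of multiplication by a fixed element at infinity. The separation between $\gamma f$ and the repelling point $(\gamma f)^-$ follows, and the remaining separation condition is obtained the same way from the positions of $\gamma$, $\gamma^{-1}$, $a_i^{\pm}$. To make the ``far out'' quantifiers uniform, I would argue by contradiction: take a sequence $\gamma_n$ violating the statement for $\eps_0=1/n$, pass to a subsequence with $\gamma_n\to x$ and $\gamma_n^{-1}\to y$, choose $i$ avoiding $x,y$, and derive a contradiction from the dynamics above.

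For the remaining finitely many $\gamma$ in a bounded set, I would simply add to $F$ an element $f_\gamma=\gamma^{-1}a_1$, so that $\gamma f_\gamma=a_1$. Shrinking $\eps_0$ then covers these finitely many cases.

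I expect the main obstacle to be the precise separation requirement between the group element $\gamma f$ and its attracting fixed point. Dynamically, $(\gamma f)^+$ tends to lie near $\gamma$, and hence near $\gamma f$. So I would first reread which combination, for instance $(\gamma f)^{-1}$ versus $(\gamma f)^{\pm}$, is really needed, and adapt the choice of $i$ so that the relevant points are the ones forced apart by the pigeonhole step.
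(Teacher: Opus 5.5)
\textbf{The gap: the third separation condition.} You write that this condition ``is obtained the same way'', and at the end you flag it as the main obstacle. It is not a technical obstacle. The condition $d(\gamma f,(\gamma f)^+)>\eps_0$, as printed, is false for every infinite $\Gamma$, so no choice of $i$, $N$ or $F$ can produce it.

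To see this, let $g_n$ be an escaping sequence of loxodromics with $d(g_n^+,g_n^-)\geq\eps_0$. Pass to a subsequence with $g_n\to x$ and $g_n^{-1}\to y$, so that $g_n\to x$ locally uniformly on $M\smallsetminus\{y\}$.
\begin{itemize}
\item A fixed point of $g_n$ lying outside a small ball about $y$ equals its own image, hence lies near $x$.
\item By the separation, $x\neq y$, and one fixed point is near $x$ while the other is near $y$.
\item Suppose $g_n^+$ were the one near $y$. For large $n$, $g_n$ maps a small closed ball $\bar B$ about $x$ into itself. Since $\bar B$ is not a single point, pick $z\in\bar B\smallsetminus\{g_n^-\}$; then $g_n^+=\lim_k g_n^k z$ lies in $\bar B$, a contradiction.
\end{itemize}
Hence $g_n^+\to x$, i.e.\ $d(g_n,g_n^+)\to 0$. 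Since $F$ is finite, $\gamma f$ escapes whenever $\gamma$ does, so only finitely many $\gamma$ can satisfy all three inequalities.

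Your own construction shows this concretely: $\gamma_n a_i^N\to x$ and $(\gamma_n a_i^N)^+\to x$. So the Fact is misstated. The natural companion of $d(\gamma f,(\gamma f)^-)>\eps_0$ is $d((\gamma f)^{-1},(\gamma f)^+)>\eps_0$, which is presumably the form of the cited lemma. Rather than ``adapting the choice of $i$'', you should have concluded that the statement needs this correction. The paper gives no proof of its own, only the citation, and it uses only the first quantity (in Lemma \ref{flow line through translates}), so nothing downstream is affected.

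\textbf{With the correction, your argument works.} Your ping-pong/pigeonhole route is the standard one. For the corrected third condition: $(\gamma_n a_i^N)^{-1}=a_i^{-N}\gamma_n^{-1}\to a_i^{-N}y$, which is close to $a_i^-$ because $d(y,a_i^+)\geq\eta/3$. Meanwhile $(\gamma_n a_i^N)^+\to x$ with $d(x,a_i^-)\geq\eta/3$. Three smaller repairs are still needed.
\begin{enumerate}
\item The convergence $\gamma_n f\to x$ does not come from Fact \ref{prop of cocycle}, which is about magnitudes. It follows from the characterization of convergence in $\Gamma\sqcup M$: $\gamma_n f\to x$ locally uniformly on $M\smallsetminus\{f^{-1}y\}$.
\item Fix the quantifiers in the right order. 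Choose $N$ depending only on $\eta$, and set $F_0=\{a_1^N,a_2^N,a_3^N\}$ and $\eps_1=\eta/10$. Show by contradiction that only finitely many $\gamma$ admit no good $f\in F_0$. Only then adjoin $\gamma^{-1}a_1$ for those $\gamma$ and shrink $\eps_0$. As written, ``a sequence violating the statement'' refers to an $F$ that depends on the exceptional set you are trying to produce.
\item In the limit argument, choose $i$ by the quantitative pigeonhole applied to $x$ and $y$ (distance at least $\eta/3$ from $a_i^{\pm}$), not merely $a_i^{\pm}\notin\{x,y\}$. Since $N$ and $\eps_1$ are fixed beforehand, a merely qualitative choice gives no uniform contraction.
\end{enumerate}
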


With this fact, we define $$\delta_0:= \min \{d(fx,fy) \text{ }|\text{ } x,y \in M,\text{ } d(x,y) \geq \eps_0, \text{ and } f \in F\},$$ and $K_0:= K_0' \times \{0\} \subset \Tilde{\Omega}_{\Gamma},$ where $$K_0':= \{ (x,y)\in M^{(2)} \text{ }| \text{ } d(x,y) \geq \min\{\delta_0, \eps_0\}\}.$$

The next lemma shows that for any pair of translates of $K_0$, there always exists some flow line that intersects both.

\begin{lemma}\label{flow line through translates}
    For all $g \in \Gamma$, there exists $v \in K_0$ such that the flow line $\tilde{\psi}^{[0,\infty)}(v)$ intersects $g.K_0$.
\end{lemma}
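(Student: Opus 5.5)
Given $g\in\Gamma$, I would produce an explicit point by using the axis of a loxodromic element, chosen via Fact \ref{approx loxo}. Let $f\in F$ be such that $\gamma:=gf$ is loxodromic and
\[
d(\gamma^+,\gamma^-)>\eps_0 .
\]
The fixed points $\gamma^\pm$ lie in $\Lambda_\Gamma$, and since they are distinct the pair $(\gamma^+,\gamma^-)$ lies in $\Lambda_\Gamma^{(2)}$. As $\gamma$ fixes both endpoints, it preserves the flow line $\{(\gamma^+,\gamma^-,t)\}_{t\in\R}$ and acts on it by translation by $\sigma'(\gamma,\gamma^+)$. However, this translated line passes through $g.K_0$, which is not the same as $\gamma.K_0$. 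So I will instead work with the pair $(f^{-1}\gamma^+, f^{-1}\gamma^-)$, or more simply use the line through the point $w:=(\gamma^+,\gamma^-,0)$ together with $g$ directly.

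Here is the plan. Set $x=\gamma^+$ and $y=\gamma^-$, and consider $u:=(f x, f y, 0)$, where $f x$ means the image of $x$ under $f$. Because $d(x,y)\ge \eps_0$, the definition of $\delta_0$ gives $d(fx,fy)\ge\delta_0$. Hence both $(x,y,0)$ and $(fx,fy,0)$ lie in $K_0$, since $K_0'$ uses the threshold $\min\{\delta_0,\eps_0\}$. Next apply $g$ to $(fx,fy,0)\in K_0$:
\[
g.(fx,fy,0)=(gfx,\,gfy,\,\sigma'(g,fx))=(\gamma x,\gamma y,\sigma'(g,fx))=(x,y,\sigma'(g,fx)).
\]
So $g.K_0$ contains a point on the same flow line as $v:=(x,y,0)\in K_0$, at time $s:=\sigma'(g,fx)$. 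If $s\ge 0$ we are done with this $v$.

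The main obstacle is the sign of $s$. When $s<0$, I would swap the roles of the endpoints and use $v'=(y,x,0)$ instead, so that we flow toward the repelling point. This does not directly work, because the $\R$-coordinate transforms by $\sigma'(g,\cdot)$ evaluated at the first coordinate, which gives a different value; it is also unclear that $(y,x)$ is the correct orientation for the flow $\tilde\psi$. A better fix is to use the cocycle identity:
\[
\sigma'(\gamma,x)=\sigma'(g,fx)+\sigma'(f,x).
\]
Since $\gamma$ is loxodromic with attracting point $x=\gamma^+$ and $d(x,\gamma^{-1})$ is controlled, the quantity $\sigma'(\gamma,\gamma^+)$ should be positive; by properness it is large once $\gamma$ is outside a finite set, and one can check directly that $\sigma'(\gamma^n,\gamma^+)=n\sigma'(\gamma,\gamma^+)\to+\infty$. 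Meanwhile $\sigma'(f,x)$ is bounded for $f\in F$ by continuity and compactness. So $s\ge \sigma'(\gamma,\gamma^+)-C_F$.

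For $g$ outside a finite set, properness applied with $d(\gamma^+,\gamma^-)>\eps_0$ forces $s\ge0$. Here properness is used in the form that $\sigma'(\gamma,\gamma^+)\to\infty$ along distinct loxodromics with separated fixed points, and $gf$ ranges over distinct elements as $g$ does, up to the finite multiplicity coming from $F$. For the remaining finitely many $g$, I would handle each case separately. One option is to choose $v$ as another point of $K_0$ on the same line, namely $(x,y,t)$ with $t\le s$; but $K_0$ only has $\R$-coordinate $0$, so this is not available. I therefore expect the honest fix to be a different one: replace $\gamma$ by a power $g f$ with $f$ adjusted, or accept that the statement is verified using the flexibility in choosing $f$. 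This finite-exception case is the step I expect to need the most care.
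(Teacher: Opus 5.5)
Your construction is the paper's. You pick $f\in F$ with $\gamma=gf$ loxodromic, note that $(x,y)=(\gamma^+,\gamma^-)$ and $(fx,fy)$ both lie in $K_0'$, and compute $g.(fx,fy,0)=(x,y,\sigma'(g,fx))$. The paper's proof stops at exactly this point. It reduces the lemma to $K_0'\cap g.K_0'\neq\emptyset$ and concludes that ``the flow line defined by $v$ passes through $K_0$ and $g.K_0$'', without ever examining the sign of $s=\sigma'(g,fx)$. So what the paper actually establishes is the two-sided statement: the whole line $\tilde{\psi}^{\R}(v)$ meets $g.K_0$. This is also the form of the standing hypothesis in Section~\ref{flow space}.

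You correctly noticed that the forward ray needs $s\ge 0$, and your argument for it is sound and goes beyond the paper:
\begin{itemize}
\item by the cocycle identity, $s=\sigma'(\gamma,\gamma^+)-\sigma'(f,\gamma^+)$;
\item properness applied to the powers $\gamma^n$ gives $\sigma'(\gamma,\gamma^+)>0$ for every loxodromic $\gamma$;
\item properness applied to the at most $|F|$-to-one assignment $g\mapsto gf$, whose images satisfy $d(\gamma^+,\gamma^-)>\eps_0$, gives $s\ge 0$ for all but finitely many $g$.
\end{itemize}

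The remaining finitely many $g$ are a genuine gap, and the remedies you float do not close it. $K_0=K_0'\times\{0\}$ has no thickness in the $\R$-direction. Passing to powers of $\gamma$ is irrelevant, because the translate to be reached is $g.K_0$ itself. Moreover, nothing in the hypotheses forces $s\ge 0$ for a particular short $\gamma$. Replacing $\sigma'$ by $\sigma'(h,z)+u(hz)-u(z)$ with $u$ continuous keeps it expanding and changes neither $K_0$ nor $\sigma'(\gamma,\gamma^+)$, but it shifts $s$ by $u(x)-u(fx)$.

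There are two clean ways out.
\begin{itemize}
\item Prove and use only the two-sided version, which your computation already gives for every $g$.
\item Exploit the uniform bound your argument actually yields for \emph{every} $g$, namely $s>-\sigma'(f,x)\ge -C_F$ with $C_F:=\max_{f\in F}\max_{z\in M}|\sigma'(f,z)|$. Then thicken $K_0$ to $K_0'\times[-C_F,0]$. The point $v=(x,y,-C_F)$ lies in the thickened set, and $\tilde{\psi}^{\,s+C_F}(v)=g.(fx,fy,0)$ with $s+C_F>0$, uniformly in $g$. This should be harmless later, where $K_0$ only serves as a fixed compact set contained in every $K$ considered.
\end{itemize}
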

\begin{proof}
    We just need to show that $K_0' \cap g.K_0' \neq \emptyset$ for all $g\in \Gamma.$ If the intersection is non-empty, we can find $(v_g^+, v_g^-)\in K_0' \cap g.K_0'$. By construction $(v_g^+, v_g^-, 0) \in K_0$, and $\left(g^{-1}.v_g^+,g^{-1}.v_g^-, 0\right) \in K_0 $. Then $$g. \left(g^{-1}.v_g^+,g^{-1}.v_g^-, 0\right) = \left(v_g^+, v_g^-, \sigma'(g,g^{-1}.v_g^+)\right) \in g.K_0,$$ i.e. the flow line defined by $v = (v_g^+, v_g^-, 0)$ passes through $K_0$ and $g.K_0$. 
    
    We now prove the claim. Fix $g \in \Gamma$. Let $f\in F$ be an element such that $gf$ is loxodromic and let $(gf)^{\pm} \in M$ be the distinct fixed points of $gf$ as in Fact \ref{approx loxo}. We have $d\left((g f)^+,(gf)^-\right) > \eps_0,$ while $d\left(f.(gf)^+, f.(gf)^-\right)\geq \delta_0 $ by construction. This implies that $\left((gf)^+, (gf)^-\right)$ and $f.\left((gf)^+, (gf)^-\right) $ are both in $K_0'$ and $$\left((gf)^+, (gf)^-\right) =(gf).\left((gf)^+, (gf)^-\right) =g.\left(f.\left((gf)^+, (gf)^-\right)\right)=  \in g.K_0'.$$ This proves the claim, and hence the lemma. 
\end{proof}

\begin{prop}\label{containment of Gamma_K}
    Let $K_1, K_2$ be two compact sets in $\Tilde{\Omega}_{\Gamma}$ that both contain $K_0$. If $K_1 \subset int(K_2)$, then there exists a finite set $S:= S(K_1, K_2) \subset \Gamma$ such that $$\Gamma_{K_2} \cap \left(\Gamma\smallsetminus S\right) \subset \bigcup_{g,h \in S} g \Gamma_{K_1} h^{-1}.$$ 
\end{prop}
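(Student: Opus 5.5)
Take $\gamma\in\Gamma_{K_2}$. By definition there are $v\in K_2$ and $t\ge 0$ with $\tilde\psi^t(v)\in\gamma.K_2$ and $\tilde\psi^{[0,t]}(v)\cap\Gamma.K_2\subset K_2\cup\gamma.K_2$. The strategy mirrors the Claim in the proof of Proposition \ref{est of magnitude}. We move from $v$ to a nearby point on the same flow line that lies in a translate of $K_1$, do the same near the endpoint, and read off an element of $\Gamma_{K_1}$ up to bounded left and right corrections. Since $K_1\subset K_2$, no translate of $K_1$ can be met strictly between the last visit to $K_2$ and the first visit to $\gamma.K_2$ except inside $K_2\cup\gamma.K_2$. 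So the only issue is controlling the excursions inside $K_2$ and $\gamma.K_2$.

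The first step is to find translates of $K_1$ on the segment. If the segment $\tilde\psi^{[0,t]}(v)$ meets no translate of $K_1$, we instead pass to a nearby flow line. Lemma \ref{flow line through translates} gives a flow line through $K_0\subset K_1$ and $\gamma.K_0\subset\gamma.K_1$. We can try to use this line in place of the original one when the original segment misses $\Gamma.K_1$.

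The second step makes this precise. Let
\[
u:=\sup\{s\in[0,t] : \tilde\psi^s(v)\in\Gamma.K_1\cap K_2\},\qquad w:=\inf\{s\ge u : \tilde\psi^s(v)\in\Gamma.K_1\cap\gamma.K_2\},
\]
and choose $g,h'$ with $\tilde\psi^u(v)\in g.K_1$ and $\tilde\psi^w(v)\in h'.K_1$. Between times $u$ and $w$, the line meets no translate of $K_1$. Hence $g^{-1}h'\in\Gamma_{K_1}$, after recentering by $g^{-1}$. Both $g.K_1$ and $K_2$ meet at a point, and $\Gamma$ acts properly discontinuously (Proposition 10.2 in \cite{2024arXiv240409713B}). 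Therefore $g$ lies in the finite set $S_0=\{g : g.K_1\cap K_2\neq\emptyset\}$. Similarly $h'=\gamma h$ with $h\in S_0$, so $\gamma=g(g^{-1}h')h^{-1}\in S_0\Gamma_{K_1}S_0^{-1}$.

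The main obstacle is the case where the segment of the flow line inside $K_2$, or inside $\gamma.K_2$, does not meet $\Gamma.K_1$ at all. Then $u$ or $w$ is undefined. One would like to replace the flow line by another line that does pass through $K_1$ near its beginning and near $\gamma.K_1$ near its end, yet still avoids other copies of $K_1$ in between.

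Here the hypotheses $K_1\subset \operatorname{int}(K_2)$ and $K_0\subset K_1$ enter, and I would argue by contradiction as in Lemma \ref{disjoint flow lines} and Corollary \ref{flow lines with same endpoint are close}. Suppose $\gamma_n\in\Gamma_{K_2}$ escapes and the construction fails for every finite $S$. Lemma \ref{flow line through translates} gives lines $w_n$ from $K_0$ to $\gamma_n.K_0$. Since $t\to\infty$, Lemma \ref{geod ball and shadows} and Lemma \ref{disjoint flow lines} show that the middle portions of these lines avoid $\Gamma.K_2$ once the length exceeds $L$. Consequently any visits of $w_n$ to $\Gamma.K_1$ occur within a bounded time $R$ of the endpoints. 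This again forces the first and last $K_1$-translates into a finite set, via proper discontinuity applied to the compact set $\tilde\psi^{[-R,R]}(K_2)$.

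The finitely many $\gamma$ with short $t$ are absorbed into the exceptional finite set. We then enlarge $S$ to contain $S_0$, the finite set arising from the bound $R$, and these exceptional elements. That yields the stated inclusion for $\gamma\notin S$.
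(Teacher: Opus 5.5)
Your final argument is the paper's proof. You take the line from $K_0\subset K_1$ to $\gamma.K_0\subset\gamma.K_1$ given by Lemma \ref{flow line through translates}. You then use Lemma \ref{disjoint flow lines} (once $t_{\gamma,K_2}\ge L$) to confine its visits to $\Gamma.K_1$ to within time $R$ of the two ends. Finally, proper discontinuity puts the first and last $K_1$-translates in a finite set.

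That argument applies directly to every $\gamma\in\Gamma_{K_2}$ with $t_{\gamma,K_2}\ge L$, so the case split on the original witnessing line and the contradiction framing are unnecessary. Also, Lemma \ref{disjoint flow lines} gives avoidance of $\Gamma.K_1$, not $\Gamma.K_2$, but avoidance of $\Gamma.K_1$ is all you actually use.
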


\begin{proof}
    Let $L$ and $R$ be the constants defined in Lemma \ref{disjoint flow lines}. Define $$S':= \{\gamma \in \Gamma \text{ }|\text{ } \gamma.K_1 \cap (K_1)_{[-R,R]}\}$$ where $ (K_1)_{[-R,R]}:= \{v \in \Tilde{\Omega}_{\Gamma} \text{ }|\text{ } \Tilde{\psi}^s(v) \in K_1 \text{ for some } s \in [-R,R]\}$.

    Choose $g \in \Gamma_{K_2}$ such that $t_{g,K_2}> L$. If no such $g$ exists, then $\Gamma_{K_2}$ is a finite set and the proposition hold trivially. For any $v \in K_1$, if there exists $t_v>0$ such that $\Tilde{\psi}^{t_v}(v) \in g.K_1$, Lemma \ref{disjoint flow lines} implies $\Tilde{\psi}^{[R, t_v-R]} (v)\cap \Gamma.K_1 = \emptyset$. Using a similar argument in the proof of Proposition \ref{est of magnitude}, we can find $h_1, h_2  \in S'$ such that the flow line $\tilde{\psi}^{[0,\infty)}(v)$ does not intersect any translates of $K_1$ between $h_1.K_1$ and $gh_2.K_1$, i.e. $h_1^{-1}gh_2\in \Gamma_{K_1}.$ Taking $S$ to be the union of $S'$ and the finite set of elements $g \in \Gamma_{K_1}$ with $t_{g,K_2}<L$ will prove the proposition is proved.
\end{proof}

\subsection{Entropy at Infinity}\label{entropy at infinity}


Given a compact set $K \subset \Tilde{\Omega}_{\Gamma}$, we can define a Poincar\'e series associated with $\Gamma_{K}$ as follows: $$Q_{\Gamma_{K}, \sigma}(s):= \sum_{\gamma\in \Gamma_{K}} \exp{\left(-s \cdot ||\gamma||_{\sigma}\right)}.$$ Let $\delta_{\sigma}(\Gamma_K):= \inf \{s >0\text{ }|\text{ } Q_{K^c, \sigma}(s)<+\infty\}$ be the associated critical exponent. 

\begin{lemma}\label{decreasing entropy}
    If $K_1, K_2$ are compact sets in $\Tilde{\Omega}_{\Gamma}$ that contain $K_0$ and $K_1 \subset int(K_2),$ then $\delta_{\sigma}(\Gamma_{K_2}) \leq \delta_{\sigma}(\Gamma_{K_1}).$
    
\end{lemma}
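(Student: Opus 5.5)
The plan is to deduce the inequality directly from Proposition \ref{containment of Gamma_K}, which applies verbatim under the present hypotheses. It provides a finite set $S=S(K_1,K_2)\subset\Gamma$ such that
$$\Gamma_{K_2}\subset S\cup\bigcup_{g,h\in S} g\,\Gamma_{K_1}h^{-1}.$$
It therefore suffices to show that, for every $s>\delta_\sigma(\Gamma_{K_1})$, the series $Q_{\Gamma_{K_2},\sigma}(s)$ converges. This immediately gives $\delta_\sigma(\Gamma_{K_2})\le \delta_\sigma(\Gamma_{K_1})$.

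Fix $s>\delta_\sigma(\Gamma_{K_1})$. The sum of $\exp(-s\|\gamma\|_\sigma)$ over $\gamma\in\Gamma_{K_2}$ is bounded by the finite contribution from $S$ plus
$$\sum_{g,h\in S}\ \sum_{\gamma\in\Gamma_{K_1}} \exp\left(-s\,\|g\gamma h^{-1}\|_\sigma\right).$$
Overcounting is harmless, since every term is positive and we only need an upper bound. We then apply the first item of Fact \ref{prop of cocycle} to the finite set $S\cup S^{-1}$, which gives a constant $D$ with $\|g\gamma h^{-1}\|_\sigma\ge \|\gamma\|_\sigma-2D$ for all $g,h\in S$ and $\gamma\in\Gamma$. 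Each inner sum is thus at most $e^{2sD}\,Q_{\Gamma_{K_1},\sigma}(s)<\infty$. Since there are only $|S|^2$ pairs $(g,h)$, the whole series converges.

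There is a small point to address: the critical exponent is defined as an infimum over $s>0$. If $\Gamma_{K_2}$ is finite, its exponent is $0$ (or the series converges for every $s>0$) and the inequality is trivial. Otherwise, the argument above shows that the set of $s$ where $Q_{\Gamma_{K_2},\sigma}$ converges contains $(\delta_\sigma(\Gamma_{K_1}),\infty)$.

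I do not expect a real obstacle here. All the geometric work is contained in Proposition \ref{containment of Gamma_K}, and the remaining estimate is the routine multiplicative comparison of magnitudes under bounded left and right translation.
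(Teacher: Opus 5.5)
Your proposal is correct and follows essentially the same route as the paper. Both apply Proposition \ref{containment of Gamma_K} to cover $\Gamma_{K_2}$, up to a finite set, by the finitely many translates $g\Gamma_{K_1}h^{-1}$, and then use the first item of Fact \ref{prop of cocycle} to compare $\|g\gamma h^{-1}\|_\sigma$ with $\|\gamma\|_\sigma$ term by term. Your bookkeeping is slightly cleaner than the paper's: you keep the finite contribution from $S$, pass to $S\cup S^{-1}$ to handle the right translations, and count $|S|^2$ pairs rather than the paper's $2|S|$.
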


\begin{proof}
    Lemma \ref{containment of Gamma_K} implies there exists a finite set $S$ such that $$\left( \Gamma_{K_2} \cap \left(\Gamma\smallsetminus S\right) \right)\subset \bigcup_{g,h \in S} g \Gamma
    _{K_1} h^{-1}.$$ We can then rewrite the Poincar\'e series of $\Gamma_{K_2}$ as 
    \begin{equation}
    \begin{split}
        \sum_{\gamma \in \Gamma_{K_2}} \exp{\left(-s\cdot ||\gamma||_{\sigma}\right)} &\leq \sum_{g,h \in S}\sum_{\gamma \in \Gamma_{K_1}} \exp{\left(-s\cdot ||g\gamma h^{-1}||_{\sigma}\right)} \\
        & \leq \sum_{g,h \in S} \sum_{\gamma \in \Gamma_{K_1}} C\cdot \exp{\left(-s\cdot ||\gamma||_{\sigma}\right)} \text{ \quad by Fact \ref{prop of cocycle}}\\
        & = 2\cdot|S|\cdot C \sum_{\gamma \in \Gamma_{K_1}} \exp{\left(-s\cdot ||\gamma||_{\sigma}\right)}.
    \end{split}
    \end{equation}
    This means $Q_{K_2,\sigma} $ converges if $Q_{K_1,\sigma}$ converges, hence $\delta_{\sigma}(\Gamma_{K_2}) \leq \delta_{\sigma}(\Gamma_{K_1}).$    
\end{proof}

\begin{defn}
    We define the \emph{entropy at infinity} for $\Gamma$, denoted by $\delta_{\infty}(\Gamma)$, as $$\delta_{\infty}(\Gamma) := \inf_{K \subset \Tilde{\Omega}_{\Gamma} \text{ compact}, \text{ } K_0 \subset K } \delta_{\sigma}(\Gamma_{K}).$$
\end{defn}

\begin{rmk}\label{rewrite delta infty}
    Due to Lemma \ref{decreasing entropy}, we can rewrite the entropy at infinity for $\Gamma$ as $$\delta_{\infty}(\Gamma) = \lim_{i\rightarrow \infty} \delta_{\sigma}(\Gamma_{K_i})$$ for any exhausting sequence of compact sets $K_i$ in $\Tilde{\Omega}_{\Gamma}$ such that $K_0 \subset K_1,$ and $K_i \subset int(K_{i+1})$.
\end{rmk}

\subsection{Strongly Positively Recurrent Subgroups} With the discussion above, we can define the following. 

\begin{defn}
   The group $\Gamma \subset Homeo(M)$ is \emph{strongly positively recurrent} (SPR) with respect to $\sigma$ and $\sigma'$ if $\delta_{\infty}(\Gamma) <\delta := \delta_{\sigma}(\Gamma)$.
\end{defn}

The SPR condition is also referred as a \emph{critical gap at infinity} by Schapira--Tapie in \cite{2018arXiv180204991S}. We will show that if $\Gamma$ is strongly positively recurrent, then the induced BMS measure $m_{\Gamma}$ on $\Omega_{\Gamma}$ is finite.

\begin{thm}\label{spr implies finite BMS}
    Assume $\Gamma$ is SPR with respect to $\sigma$ and $\sigma'$ and the magnitudes induced by the cocycles satisfy the following: for all $\eps>0$, there exists $C>0$ such that 
    \begin{equation}\label{relate magnitudes}
        ||\gamma||_{\sigma'} \leq C\cdot \exp{\left(\eps \cdot ||\gamma||_{\sigma}\right)}
    \end{equation}
    for all $\gamma \in \Gamma$,
    then $m_{\Gamma}(\Omega_{\Gamma}) <+\infty$ for the BMS measure $m_{\Gamma}$ induced by the GPS system $(\sigma, \bar{\sigma}, G).$
\end{thm}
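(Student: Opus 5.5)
The plan is to reduce the statement to Theorem \ref{pr implies finite}. I will exhibit a compact set $K\supset K_0$ such that $\Gamma$ is positive recurrent with respect to $K$. That requires two things: the Poincar\'e series $Q_{\Gamma,\sigma}$ diverges at $\delta=\delta_\sigma$, and
$$\sum_{\gamma\in\Gamma_K}||\gamma||_{\sigma'}\exp(-\delta\,||\gamma||_{\sigma})<+\infty.$$
The two conditions will be treated separately. The second is a direct consequence of the critical gap together with the growth hypothesis \eqref{relate magnitudes}. The first is the real content (Theorem \ref{spr implies divergent} in the introduction).

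\textbf{The summability condition.} Since $\delta_\infty(\Gamma)<\delta$, Remark \ref{rewrite delta infty} gives a compact $K\supset K_0$ with $\delta_\sigma(\Gamma_K)<\delta$. Choose $\eta>0$ so small that $\delta_\sigma(\Gamma_K)+2\eta<\delta$. Applying \eqref{relate magnitudes} with $\eps=\eta$ gives $||\gamma||_{\sigma'}\le C e^{\eta||\gamma||_\sigma}$. Hence
$$\sum_{\gamma\in\Gamma_K}||\gamma||_{\sigma'}e^{-\delta||\gamma||_\sigma}\le C\,Q_{\Gamma_K,\sigma}(\delta-\eta),$$
and the right-hand side is finite because $\delta-\eta>\delta_\sigma(\Gamma_K)$. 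This settles the summability part. One should also check that $\delta<\infty$, which is part of the standing assumptions, and that the flow-line hypotheses preceding Lemma \ref{disjoint flow lines} hold for $K$; Lemma \ref{flow line through translates} supplies the latter since $K\supset K_0$.

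\textbf{Divergence at $\delta$.} Every $\gamma\in\Gamma$ with $t_{\gamma,K}$ large should decompose as a concatenation $\gamma=\gamma_1 s_1\gamma_2 s_2\cdots\gamma_n s_n$, with $\gamma_i\in\Gamma_K$ and $s_i$ in a fixed finite set $S$. To get this, take a flow line from $K$ to $\gamma.K$ (available by Lemma \ref{flow line through translates}), record the successive translates of $K$ it meets, and absorb the bounded jumps into $S$ exactly as in the Claim in Proposition \ref{est of magnitude}. Corollary \ref{cor of geod ball and shadow} says consecutive pieces are in ``shadow position'', so the second item of Fact \ref{prop of cocycle} gives quasi-additivity
$$||\gamma||_\sigma\ge\sum_i||\gamma_i||_\sigma-nC_1.$$
This is the step I expect to be the main obstacle: the error must grow only linearly in the number $n$ of pieces. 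Note also that the gap condition requires the pieces to lie in $\Gamma_K$ itself, not merely in the finite set $S$.

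Granting this, suppose for contradiction that $Q_{\Gamma,\sigma}(\delta)<\infty$. I will follow the Schapira--Tapie argument. Set $P(s):=\sum_{g\in\Gamma_K}|S|\,e^{C_1}e^{-s||g||_\sigma}$. The decomposition gives
$$Q_{\Gamma,\sigma}(s)\le C_2\sum_{n\ge0}P(s)^n$$
for $s>\delta_\sigma(\Gamma_K)$. So if $P(s)<1$ at some $s<\delta$, then $Q_{\Gamma,\sigma}(s)<\infty$, contradicting the definition of $\delta$. Therefore $P(s)\ge1$ for all $s\in(\delta_\sigma(\Gamma_K),\delta)$.

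For the reverse direction, the same decomposition run backwards (gluing arbitrary words in $\Gamma_K$ with connectors in $S$, using Fact \ref{prop of cocycle} for the upper bound) yields an injective-up-to-bounded-multiplicity map from words to $\Gamma$. This gives $Q_{\Gamma,\sigma}(s)\ge c\sum_n (c'P'(s))^n$ for a comparable series $P'$. Because $P'$ is finite and continuous on $(\delta_\sigma(\Gamma_K),\infty)$, it is strictly decreasing and tends to $0$. Since $P'(s)\to\infty$ is impossible at $s=\delta$, $P'$ is finite there, and the critical exponent of $\Gamma$ is characterized by $c'P'(\delta)=1$. Then $\sum_n (c'P'(\delta))^n$ diverges, hence $Q_{\Gamma,\sigma}(\delta)=\infty$. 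If the constants in the two comparisons cannot be matched precisely, the fallback is to show that $Q_{\Gamma,\sigma}(s)\asymp\frac{1}{1-F(s)}$ for a single series $F$ obtained by choosing a canonical decomposition (first-return words). In that form, divergence at $\delta$ is immediate from $F(\delta)=1$, and $F(\delta)=1$ is forced by $F$ being finite at $\delta$ and continuous.

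Finally, with divergence in hand, Proposition \ref{erg of flow} gives ergodicity, and Theorem \ref{pr implies finite} applied to $K$ yields $m_\Gamma(\Omega_\Gamma)<+\infty$.
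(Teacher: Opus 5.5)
Your reduction to Theorem \ref{pr implies finite} and your summability estimate are exactly the paper's argument: choose $K\supset K_0$ with $\delta_\sigma(\Gamma_K)<\delta$, and bound $\|\gamma\|_{\sigma'}$ by $Ce^{\eps\|\gamma\|_\sigma}$. The gap is in your proof of divergence at $\delta$.

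The two renewal comparisons carry per-piece multiplicative errors. The upper bound uses $P=|S|e^{C_1}Q_{\Gamma_K,\sigma}$. Any lower comparison series $c'P'$ is smaller than $P$ by a fixed factor coming from $|S|$ and the quasi-additivity constants. Together they give only $c'P'(\delta)\le 1\le P(\delta)$, so they trap $\delta$ between the roots of $c'P'=1$ and $P=1$ but do not identify it. At $s=\delta$ both inequalities are vacuous. The upper bound $C_2\sum_n P(\delta)^n$ is infinite. The lower bound $c\sum_n (c'P'(\delta))^n$ diverges only if $c'P'(\delta)\ge 1$, which is precisely the assertion ``the critical exponent is characterized by $c'P'(\delta)=1$'' that you state but never derive.

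The errors compound as $e^{\pm nC}$ in the number $n$ of pieces. They are therefore not bounded perturbations of $\|\cdot\|_\sigma$, and divergence at the exact critical exponent is not stable under them. Your fallback $Q_{\Gamma,\sigma}\asymp 1/(1-F)$ with a single $F$ fails for the same reason. Moreover, a concatenation of first-return words is in general not itself a first-return decomposition, so the sum over canonical words is not of the form $\sum_n F^n$.

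The lower bound also rests on two unsupported inputs. First, Corollary \ref{cor of geod ball and shadow} gives shadow position only for translates met along an actual flow line. For arbitrary words, Fact \ref{prop of cocycle} yields no upper bound unless you choose connectors that force $d(\alpha^{-1},f\beta)>\eps$. Second, you give no argument for bounded multiplicity of the map from words to $\Gamma$, and a general convergence group has no normal forms.

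The paper obtains divergence measure-theoretically in Theorem \ref{spr implies divergent}, and this route is insensitive to constants. It builds $\mu$ from Patterson's modified series. It then shows that $\mu(\mathcal{U}_T(\Lambda_{K_2^c}))$ is bounded by $C|F|$ times the tail $\sum_{\bar g\in\Gamma_{K_1},\,t_{\bar g}>T}e^{-(s-\eps')\|\bar g\|_\sigma}$ of a convergent series, which tends to $0$ as $T\to\infty$. Hence $\mu(\mathcal{L}_{K_2^c})=0$, the conical limit set has full measure, and the Hopf--Tsuji--Sullivan dichotomy (Theorem \ref{dichotomy}) forces divergence. The entropy gap enters only through this one-sided estimate, so multiplicative constants are harmless.

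You should either cite that theorem or adopt its argument. A counting proof would need much sharper input than your sketch provides. For example, a uniform two-piece submultiplicativity of orbit counts would give $\#\{\gamma:\|\gamma\|_\sigma\le T\}\gtrsim e^{\delta T}$, and proving it requires using the gap to control elements with long excursions outside $\Gamma.K$.
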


The proof idea for Theorem \ref{spr implies finite BMS} is mainly inspired by Schapira--Tapie \cite{2018arXiv180204991S}. To do this, we employ Theorem \ref{pr implies finite} that we proved earlier. Essentially, we just need to show the SPR condition implies the Poincar\'e series $Q_{\Gamma,\sigma}$ diverges at the critical exponent $\delta.$ 

\begin{thm}\label{spr implies divergent}
    If $\Gamma$ is SPR with respect to $\sigma$ and $\sigma'$, then $Q_{\Gamma, \sigma}$ diverges at its critical exponent $\delta$.
\end{thm}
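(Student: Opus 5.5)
The plan is the Schapira--Tapie argument: decompose elements of $\Gamma$ along flow lines into excursions outside a large compact set, and use the gap $\delta_{\infty}(\Gamma)<\delta$ to show that $\Gamma$ behaves like a free product of a ``compact part'' and the excursion set $\Gamma_K$. Fix $\delta_\infty<s_0<\delta$. By Remark \ref{rewrite delta infty}, choose a compact $K\supset K_0$ with $\delta_{\sigma}(\Gamma_K)<s_0$, so that $Q_{\Gamma_K,\sigma}(s)$ is uniformly bounded for $s\in[s_0,\delta]$.

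First, decompose elements. For $\gamma\in\Gamma$, Lemma \ref{flow line through translates} gives a flow line from $K_0\subset K$ to $\gamma.K$. I list the successive translates $\gamma_1.K,\dots,\gamma_n.K$ of $K$ that it meets, with $\gamma_0=e$ and $\gamma_n=\gamma$, and discard returns within bounded time, as in the claim in Proposition \ref{est of magnitude}. Up to bounded corrections by a finite set $S$, each increment $\gamma_{i-1}^{-1}\gamma_i$ then lies in $\Gamma_K$. Lemma \ref{geod ball and shadows} and Corollary \ref{cor of geod ball and shadow} place the consecutive pieces in shadow position, i.e.\ $d(\alpha^{-1},\beta)>\eps$. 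The second item of Fact \ref{prop of cocycle} then gives
\[
||\gamma||_{\sigma}\ge \sum_i ||\gamma_{i-1}^{-1}\gamma_i||_{\sigma}-nC.
\]
The constant $C$ is only a per-junction error, which is why I define $\Gamma_K$ modulo $S$. This gives a surjection from words in $S\cdot\Gamma_K\cdot S$ onto $\Gamma$, with magnitude additive up to $C$ per letter.

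Second, argue by contradiction. Suppose $Q_{\Gamma,\sigma}(\delta)<\infty$. Consider $P(s)=\sum_{\gamma\in\Gamma_K'}e^{-s||\gamma||_\sigma}$, where $\Gamma_K'$ is the enlarged excursion set. The decomposition above gives only an upper bound of $Q_\Gamma$ by a geometric series in $e^{sC}P(s)$, which is the wrong direction for a contradiction. For the reverse direction I concatenate freely. Given $\gamma\in\Gamma$ and $\eta\in\Gamma_K$, Fact \ref{approx loxo} and the shadow properties supply $f\in F$ with $d((\gamma f)^{-1},\eta)>\eps$. This yields an injective-up-to-bounded-multiplicity map $\Gamma\times\Gamma_K\to\Gamma$, $(\gamma,\eta)\mapsto\gamma f\eta$, with $||\gamma f\eta||_\sigma\le||\gamma||_\sigma+||\eta||_\sigma+C'$. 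Iterating, the $n$-fold products $\eta_1f_1\eta_2\cdots$ of excursions are distinct up to bounded multiplicity, because the flow-line decomposition recovers them. Hence
\[
Q_\Gamma(s)\ge c\sum_n \bigl(e^{-sC'}P_K(s)\bigr)^n .
\]

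Third, obtain the contradiction. Since $\Gamma$ is covered by words in excursions, $\delta$ equals the abscissa where $e^{sC}P_K(s)$ crosses $1$. More carefully, with $Z_n(s)=\sum_{|\gamma|_{\text{exc}}=n}e^{-s||\gamma||}$, submultiplicativity $Z_{n+m}\le e^{sC}Z_nZ_m$ and supermultiplicativity $Z_{n+m}\ge e^{-sC}Z_nZ_m$ (from injectivity) show that $\lambda(s)=\lim Z_n^{1/n}$ exists. This $\lambda(s)$ is continuous and strictly decreasing on $(s_0,\infty)$, because all terms of $P_K$ converge there. Then $Q_\Gamma(s)<\infty$ exactly when $\lambda(s)<1$. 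The supermultiplicative bound gives $Z_n\ge e^{-sC}\lambda^n$, so $Q_\Gamma(s)\ge c\sum\lambda(s)^n$. Continuity at $\delta>s_0$ forces $\lambda(\delta)=1$, and then $Q_\Gamma(\delta)\ge c\sum 1=\infty$, a contradiction.

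The main obstacle is the injectivity, or bounded multiplicity, of the decomposition into excursions. The flow-line decomposition of $\gamma$ depends on the choice of flow line, and the junction constants must be uniform. I would first try to fix the decomposition via Lemma \ref{disjoint flow lines} and Corollary \ref{flow lines with same endpoint are close}: different flow lines from $K$ into the same far translate pass through the same translates of $K'$ up to a finite ambiguity. From this I would derive a bounded-multiplicity statement using proper discontinuity of the action.
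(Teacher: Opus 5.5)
You take a renewal-type counting route, which is genuinely different from the paper's. It has a real gap at exactly the step you flag as the main obstacle, and that step is the whole difficulty rather than a technicality.

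\textbf{What works.} The covering half of your argument is fine. Let $Z_n(s)$ count, with multiplicity, the hit sequences of flow lines starting in $K$. Then prefixes and translated suffixes of admissible words are admissible. Corollary \ref{cor of geod ball and shadow} together with Fact \ref{prop of cocycle} gives $Z_{n+m}\le e^{sC}Z_nZ_m$. Consequently:
\begin{enumerate}
\item $\lambda(s)=\lim Z_n(s)^{1/n}$ exists, and it is log-convex and finite on $(s_0,\infty)$;
\item $\lambda\ge 1$ on $(s_0,\delta)$, because $Q_{\Gamma,\sigma}\le\sum_n Z_n$;
\item hence $\lambda(\delta)\ge 1$ and $Z_n(\delta)\ge e^{-\delta C}$ for all $n$.
\end{enumerate}
Note that it is this submultiplicativity, not supermultiplicativity, that gives $Z_n\ge e^{-sC}\lambda^n$.

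\textbf{Where it fails.} To pass from $\sum_n Z_n(\delta)=\infty$ to $Q_{\Gamma,\sigma}(\delta)=\infty$, each $\gamma$ must be represented by a uniformly bounded number of admissible words, summed over \emph{all} lengths $n$. The same is needed for your claims ``$Q<\infty$ iff $\lambda<1$'' and $Z_{n+m}\ge e^{-sC}Z_nZ_m$. There is a genuine tension here:
\begin{itemize}
\item If you count all flow lines from $K$ to $\gamma.K$, their hit sequences can differ by including or omitting any translate of $K$ that nearby lines merely graze. The multiplicity is then not a priori bounded and can grow with $\|\gamma\|_\sigma$.
\item If you fix one canonical line per $\gamma$, the multiplicity is one. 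But truncations of that line need not be the canonical lines of the prefix and suffix, so submultiplicativity is lost.
\end{itemize}

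The tools you propose do not resolve this. Lemma \ref{disjoint flow lines} and Corollary \ref{flow lines with same endpoint are close} only say that lines with a common endpoint meet the same translates of the larger set $K'$, modulo a finite set. That is precisely the $K$-versus-$K'$ ambiguity that spoils exact counting.

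Your concatenation step has a related problem. Recovering $\eta_1,f_1,\eta_2,\dots$ from the product needs a converse of Lemma \ref{geod ball and shadows} with the \emph{same} $K$: every flow line from $K$ to $\gamma f\eta.K$ would have to pass through $\gamma.K$ and $\gamma f.K$. Convergence dynamics only yields such a statement for some larger compact set, which reintroduces the ambiguity. A smaller, fixable point: Fact \ref{approx loxo} by itself does not give $f$ with $d((\gamma f)^{-1},\eta)>\eps$ for a prescribed $\eta$.

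\textbf{How the paper avoids this.} The paper never needs a lower bound by counting. It uses only the covering direction: every element of $U_T(K_1,F)\cap\Gamma$ lies in some $h.\mathcal{O}_{K_1}(\bar g.K_1)$ with $h\in F$, $\bar g\in\Gamma_{K_1}$ and $t_{\bar g}>T-R$. Feeding this into Patterson's approximating measures $\mu_s$ (built from the modified series) gives
\[
\mu\bigl(\mathcal{U}_T(\Lambda_{K_2^c})\bigr)\lesssim\sum_{\bar g\in\Gamma_{K_1},\,t_{\bar g}>T}e^{-(\delta-\eps')\|\bar g\|_\sigma}.
\]
By SPR this tail tends to $0$ (Proposition \ref{measure of U_T}). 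Hence $\mu(\mathcal{L}_{K_2^c})=0$, the conical set has full measure by Lemma \ref{limit set outside of K}(3), and Theorem \ref{dichotomy} gives divergence.

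To keep a counting proof, you would need a Markov-type coding that makes the excursion decomposition canonical and compatible with concatenation. Otherwise, the measure-theoretic argument needs no injectivity at all.
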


Assume this thereom holds. Then the SPR condition and Remark \ref{rewrite delta infty} imply that there exists a large enough compact set $K \subset \Tilde{\Omega}_{\Gamma}$ such that $\delta_{\sigma}(\Gamma_K) <\delta$. Then we have
\begin{equation}\label{9} 
\sum_{g \in \Gamma_{K}} ||g||_{\sigma'}\exp{\left(-\delta\cdot ||g||_{\sigma}\right)} = \sum_{g \in \Gamma_{K}} \exp{\left(-\delta\cdot ||g||_{\sigma} + \log (||g||_{\sigma'})\right)} .
\end{equation}


  
  Condition \ref{relate magnitudes} on the magnitudes induced by the two cocycles allows us to rewrite Equation \ref{9} such that for every $\eps>0$, there exist $A, C$ where 
    \begin{equation}
        \sum_{g \in \Gamma_{K}} ||g||_{\sigma'}\exp{\left(-\delta\cdot ||g||_{\sigma}\right)} \leq A+ C\cdot \sum_{g \in \Gamma_{K}} \exp{\left(-(\delta-\eps)\cdot ||g||_{\sigma} \right)}.
    \end{equation}
    
Since $\delta_{\sigma}(\Gamma_K)$ is strictly smaller than $\delta$, for $\eps$ small enough we have the second term in the last line of the inequalities above converges. This in turn gives $\sum_{g \in \Gamma_{\Tilde{K}}} ||g||_{\sigma'}\exp{\left(-\delta\cdot ||g||_{\sigma}\right)} <+\infty$, which means $\Gamma$ is positively recurrent in the sense of Definition \ref{positive rec}. By Theorem \ref{pr implies finite}, we have $m_{\Gamma}(\Omega_{\Gamma}) < +\infty,$ and this concludes the proof of Theorem \ref{spr implies finite BMS}. 

Now all that remains is to prove Theorem \ref{spr implies divergent}. To do this, we employ the Hopf--Tsuji--Sullivan type dichotomy mentioned in Theorem \ref{dichotomy}. The goal is to show the conical limit set has full measure with respect to some Patterson--Sullivan measure $\mu$ of dimension $\delta$, and the dichotomy would imply the Poincar\'e series $Q_{\Gamma, \sigma}(\delta) = +\infty.$ We first describe a set that contains the complement of the conical limit set.

\begin{defn}
     For any compact set $ K \supset K_0$ in $\tilde{\Omega}_{\Gamma}$, we define:
     \begin{itemize}
         \item $\Lambda_{K^c}: = \{\xi \in \Lambda_{\Gamma} \text{ }|\text{ } \exists v =(\xi,\xi_-,t_v) \in K \text{ with } \tilde{\psi}^{[0,\infty)}(v) \cap \Gamma.K \subset K\};$
         \item $\mathcal{L}_{K^c}:= \Gamma. \Lambda_{K^c}.$
     \end{itemize}
\end{defn}

The set $\Lambda_{K^c}$ denotes the set of forward endpoints of flow lines that leave the compact set $K$ and never return to any translates of $K$, and $\mathcal{L}_{K^c}$ denotes the set of points in the limit set that are the forward limit points of some flow line that eventually leaves $\Gamma.K$ as shown by the following lemma. 

\begin{lemma}\label{limit set outside of K}Here are some properties of $\mathcal{L}_{K^c}:$
\begin{enumerate}
    \item If $\xi \in  \mathcal{L}_{K^c}$, then there exist $ v=(\xi,\xi_-,t_v) \in \Gamma.K $ and $T\geq 0$ such that $  \Tilde{\psi}^{[T,\infty)}(v)$ does not intersect $\Gamma.K.$

    \item If $K_1 \subset K_2 $ are compact sets in $\tilde{\Omega}_{\Gamma}$ that contain $K_0$, then $\mathcal{L}_{K_2^c} \subset \mathcal{L}_{K_1^c}.$

    \item For any compact set $K\supset K_0$, $\Lambda_{\Gamma} \setminus \mathcal{L}_{K^c} \subset \Lambda_{\Gamma}^{con}$.
\end{enumerate}
\end{lemma}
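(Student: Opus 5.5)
We prove the three items in order; the first two are essentially unwinding definitions, and the third is where the convergence-group dynamics enters.

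For (1), let $\xi \in \mathcal{L}_{K^c}$, so $\xi = g\xi'$ with $g \in \Gamma$ and $\xi' \in \Lambda_{K^c}$. By definition there is $v' = (\xi', \xi'_-, t') \in K$ with $\tilde{\psi}^{[0,\infty)}(v') \cap \Gamma.K \subset K$. The set of times $s\ge 0$ with $\tilde{\psi}^s(v') \in K$ is bounded. Indeed, the $\R$-coordinate of points of $K$ is bounded because $K$ is compact, while along the flow line it grows linearly in $s$. Let $T$ be the supremum of these times (or $T=0$ if the set is empty). Then $\tilde{\psi}^{(T,\infty)}(v')$ misses $\Gamma.K$. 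Set $v := g.v' \in g.K \subset \Gamma.K$. Its first coordinate is $g\xi' = \xi$. Since the $\Gamma$-action commutes with the flow and $\Gamma.K$ is $\Gamma$-invariant, $\tilde{\psi}^{(T,\infty)}(v) = g.\tilde{\psi}^{(T,\infty)}(v')$ misses $\Gamma.K$. Replacing $T$ by $T+1$ gives the closed ray $[T,\infty)$.

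For (2), take $\xi' \in \Lambda_{K_2^c}$ with witness $v \in K_2$, so the forward ray of $v$ meets $\Gamma.K_2$ only inside $K_2$. The ray may not start in $K_1$. However, since $K_0 \subset K_1$ and the flow line $(\xi',\xi'_-,\R)$ passes through $K_2$, the endpoints are distinct. I would first replace the witness by a point of the same flow line that lies in $K_0$, or at least in some fixed translate of $K_1$, at a time after which the ray never meets $\Gamma.K_2 \supset \Gamma.K_1$ except inside $K_2$. The forward ray of this point meets $\Gamma.K_1$ only in finitely many translates $\gamma.K_1$, because $\Gamma$ acts properly discontinuously and the relevant portion of the ray lies in the compact set $K_2$. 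Moving to the last such translate $\gamma.K_1$ and applying $\gamma^{-1}$ shows that $\gamma^{-1}\xi' \in \Lambda_{K_1^c}$, hence $\xi' \in \mathcal{L}_{K_1^c}$.

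The subtle point in (2) is producing a point of the flow line that lies in some translate of $K_1$ in the first place. If the line never meets $\Gamma.K_1$ at all, I would use the ray from part (1)'s argument together with Lemma~\ref{flow line through translates}. Alternatively, one can observe that the definition of $\Lambda_{K_1^c}$ only needs a witness whose ray avoids all translates other than $K_1$ itself. If such a witness cannot be produced, I would argue the reverse: a line that never meets $\Gamma.K_1$ also never meets the larger $\Gamma.K_0$, which is handled the same way.

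For (3), the main step is this. Let $\xi \in \Lambda_\Gamma \setminus \mathcal{L}_{K^c}$ and pick $\eta \in \Lambda_\Gamma$ with $(\xi,\eta)$ transverse. By the argument of Lemma~\ref{flow line through translates} (minimality of the action), some translate of the flow line through $(\xi,\eta)$ meets $\Gamma.K$. Since $\xi \notin \mathcal{L}_{K^c}$, the contrapositive of (1) shows that the forward ray returns to $\Gamma.K$ at arbitrarily large times. Take times $t_n \to \infty$ and elements $\gamma_n \in \Gamma$ with $\tilde{\psi}^{t_n}(v) \in \gamma_n.K$. These $\gamma_n$ are escaping, by proper discontinuity together with $t_n \to \infty$. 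Lemma~\ref{geod ball and shadows} then gives $\xi \in S_\epsilon(\gamma_n)$ for all large $n$, and Fact~\ref{alt char for conical} yields $\xi \in \Lambda^{con}_{\Gamma,\epsilon}$.

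The main obstacle is that this only gives $\epsilon$-uniform conicality for the fixed $\epsilon$ of Lemma~\ref{geod ball and shadows}, whereas $\Lambda^{con}_\Gamma$ is an intersection over all $\epsilon$. I would resolve this directly from the definition of conical limit points. Pulling back by $\gamma_n^{-1}$, the points $\gamma_n^{-1}\tilde{\psi}^{t_n}(v)$ lie in $K$, so after passing to a subsequence $\gamma_n^{-1}\xi \to a$ and $\gamma_n^{-1}\eta \to b$ with $(a,b,\cdot) \in K$, hence $a \neq b$. The convergence property makes $\gamma_n^{-1}$ converge to $b$ on $M \setminus \{\xi\}$, since $t_n \to \infty$ forces the repelling point to be $\xi$. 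This is exactly the definition of a conical limit point.
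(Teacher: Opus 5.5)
\paragraph{Part (2) has a gap.} Your part (1) follows the paper's route, and part (3) does too, up to small repairs. Part (2) has a genuine gap.

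Your argument stays on the flow line $(\xi',\xi'_-,\R)$ of the $K_2$-witness. It only works if that line meets $\Gamma.K_1$ somewhere, and nothing forces this. $K_1$ may be much smaller than $K_2$ (e.g.\ $K_1=K_0$), and nothing prevents a line through $K_2$ from having every $\Gamma$-translate of its endpoint pair at distance less than $\min\{\delta_0,\epsilon_0\}$.

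Your fallbacks do not handle this case:
\begin{itemize}
\item Lemma~\ref{flow line through translates} produces lines from $K_0$ to $g.K_0$, with no control on the forward endpoint.
\item The remark about the definition of $\Lambda_{K_1^c}$ only restates what must be proved.
\item $\Gamma.K_0\subset\Gamma.K_1$ is the \emph{smaller} set, so passing to $K_0$ does not help.
\end{itemize}

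The missing idea is to change the backward endpoint. Take a different flow line with the same forward endpoint that does pass through a translate of $K_1$, and prove that its forward ray eventually leaves $\Gamma.K_1$ for good. Because $\tilde{\Omega}_\Gamma$ is not hyperbolic, two rays with the same forward endpoint are not automatically comparable. Corollary~\ref{flow lines with same endpoint are close} (with $K=K_1$, $K'=K_2$) supplies exactly this comparison.

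The paper argues as follows. Take $v\in g.K_2$ from part (1), whose ray misses $\Gamma.K_2$ after time $T$, and $v'\in g.K_1$ with the same forward endpoint. Suppose the ray of $v'$ met $\gamma.K_1$ beyond time $L$ for infinitely many distinct $\gamma$. Then the corollary forces the ray of $v$ to meet infinitely many distinct $\gamma.K_2$. By proper discontinuity, it would then meet $\Gamma.K_2$ at arbitrarily large times, a contradiction. Your last-translate argument then gives $\xi\in\mathcal{L}_{K_1^c}$.

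\paragraph{Small repairs in part (3).}
\begin{itemize}
\item What you use is the converse of (1), not its contrapositive. You need: a ray from $\Gamma.K$ ending at $\xi$ that eventually avoids $\Gamma.K$ puts $\xi$ in $\mathcal{L}_{K^c}$. This does hold, by your last-translate argument.
\item $\eta$ cannot be arbitrary. You must choose it so that some translate of $(\xi,\eta)$ lies in $K_0'$; minimality together with Fact~\ref{approx loxo} does this.
\item The $\epsilon$-obstacle you raise comes from a misprint in Fact~\ref{alt char for conical}. The sets $\Lambda^{con}_{\Gamma,\epsilon}$ increase as $\epsilon\downarrow 0$, and $\Lambda^{con}_\Gamma$ is their union. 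So $\Lambda^{con}_{\Gamma,\epsilon}\subset\Lambda^{con}_\Gamma$ holds directly, which is what the paper uses.
\item Your direct argument is also valid once you justify that $\xi$ is the repelling point of $\gamma_n^{-1}$. The bounded $\R$-coordinate gives $\sigma'(\gamma_n,\gamma_n^{-1}\xi)=t_n+O(1)\to+\infty$. The last item of Fact~\ref{prop of cocycle} then yields $\gamma_n\to\xi$.
\end{itemize}
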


\begin{proof}
   For all $\xi \in \mathcal{L}_{K^c}$, there exist $g \in \Gamma$ and $\xi' \in \Lambda_{K^c}$ such that $g.\xi'= \xi$. By definition of $\Lambda_{K^c}$, there exists $v=(\xi', \xi'_-,t_v) \in K$ such that $\Tilde{\psi}^{[0,\infty)}(v) \cap \Gamma.K \subset K $. Note that $$g.v= (g.\xi',g.\xi'_-,t_v+\sigma'(g, \xi')).$$ Since the group action commutes with the flow $\tilde{\psi}^s$, the flow line that starts at $g.v$ only intersects $g.K$. Then $g.v$ is the desired element in $\tilde{\Omega}_{\Gamma}$ to prove $(1)$.

To prove part $(2)$, let $\xi \in \mathcal{L}_{K_2^c}$. Part $(1)$ shows that there exist $g \in \Gamma$, $T >0$ and $v \in g.K_2$ such that $\tilde{\psi}^{[T,\infty)}(v) \cap \Gamma.K_2 = \emptyset.$ Pick $v' \in g.K_1$ such that $\tilde{\psi}^{\infty}(v') = \xi$. Corollary \ref{flow lines with same endpoint are close} implies that there exists $L' >0$ such that $\tilde{\psi}^{[T+L', \infty)}(v') \cap \Gamma.K_1 = \emptyset$, hence $\xi \in \mathcal{L}_{K_1^c}.$

  In order to prove part $(3)$, one recalls the alternative characterization of the conical limit set in Fact \ref{alt char for conical}. If $\xi \in \Lambda_{\Gamma} \setminus \mathcal{L}_{K^c}$, then for all $ v \in \Gamma.K$ with $\tilde{\psi}^{\infty}(v) = \xi$, the flow line $\tilde{\psi}^{[D,\infty)}(v)$, with $D$ being the maximum time it takes to flow out of $K$, will intersect $g.K$ for some $g \in \Gamma$. This implies that $\tilde{\psi}^{[0,\infty)}(v)$ intersects $\Gamma.K$ infinitely many times. Let $\{\gamma_n\}$ denote the elements in $\Gamma$ such that $\gamma_n.K$ intersects $\tilde{\psi}^{[0,\infty)}(v) $. Lemma \ref{geod ball and shadows} states that for $\gamma_n.K$ far  enough from $K$, $x \in S_{\eps}(\gamma_n)$ for some fixed $\eps>0$. Fact \ref{alt char for conical} then implies that $\xi \in \Lambda_{\Gamma,\eps}^{con}\subset \Lambda_{\Gamma}^{con},$ and this concludes the proof.
\end{proof}

 Fix compact sets $K_1, K_2$ in $\Tilde{\Omega}_{\Gamma}$ such that $K_0 \subset K_1 \subset int(K_2)$ and $\delta_{\sigma}(\Gamma_{K_2})\leq \delta_{\sigma}(\Gamma_{K_1})<\delta$ (the existence of which is guaranteed by the SPR condition).

\begin{lemma}\label{flow lines with same endpoints need to avoid the same set}
    There exists a finite set $F:= F(K_1, K_2) \subset \Gamma$ such that for all $v , v'\in K_2$ and $g \in \Gamma\smallsetminus F$, if $ \tilde{\psi}^{\infty}(v)= \tilde{ \psi}^{\infty}(v')$ and $\tilde{\psi}^{[0,\infty)}(v) \cap g.K_2 =\emptyset$, then $\tilde{\psi}^{[0,\infty)}(v') \cap g.K_1 = \emptyset.$
\end{lemma}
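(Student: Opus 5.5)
We argue by contradiction, in the same compactness style as Lemma \ref{disjoint flow lines} and Corollary \ref{flow lines with same endpoint are close}. Suppose no such finite set exists. Then there are a sequence of \emph{distinct} elements $g_n\in\Gamma$ and points $v_n,v_n'\in K_2$ with common forward endpoint $\xi_n:=\tilde{\psi}^{\infty}(v_n)=\tilde{\psi}^{\infty}(v_n')$. They satisfy $\tilde{\psi}^{[0,\infty)}(v_n)\cap g_n.K_2=\emptyset$ and $\tilde{\psi}^{s_n}(v_n')\in g_n.K_1$ for some $s_n\ge 0$. Since the $g_n$ are distinct, $\{g_n\}$ is escaping. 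Since $\Gamma$ acts properly discontinuously on $\tilde{\Omega}_\Gamma$, only finitely many translates $g.K_1$ meet the compact set $\tilde{\psi}^{[0,T]}(K_2)$ for any fixed $T$. Hence $s_n\to+\infty$.

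We then recenter by $g_n^{-1}$, writing $w_n:=g_n^{-1}\tilde{\psi}^{s_n}(v_n')\in K_1$. After passing to a subsequence, $w_n\to w=(w^+,w^-,s)\in K_1$, and $g_n^{-1}\to x^-\in\Lambda_\Gamma$. As in the proof of Lemma \ref{geod ball and shadows}, since $s_n\to\infty$ and $v_n'\in K_2$, the backward endpoints converge: $g_n^{-1}(v_n')^-\to w^-$, and the whole compact set $g_n^{-1}K_2$ converges to the point $x^-$ in the compactification $\overline{M^{(2)}\times\R}$. Here we use that $\Gamma$ acts as a convergence group on this compactification \cite[Proposition 7.2]{2024arXiv240409718B}. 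In particular $g_n^{-1}v_n\to x^-$, and $w^-=x^-$. The forward endpoints satisfy $g_n^{-1}\xi_n=w_n^+\to w^+$, so the recentered rays $g_n^{-1}\tilde{\psi}^{[0,\infty)}(v_n)$ start near $x^-$ and end near $w^+$.

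The key step, and the expected main obstacle, is to show that these rays must pass through $\mathrm{int}(K_2)$ near $w$. This would contradict $g_n^{-1}\tilde{\psi}^{[0,\infty)}(v_n)\cap K_2=\emptyset$. Heuristically, the limiting flow line of $g_n^{-1}v_n$ has endpoints $(w^+,w^-)$ and passes through $w\in K_1\subset\mathrm{int}(K_2)$. The difficulty is that the time coordinate must also be controlled, because the starting points $g_n^{-1}v_n$ escape to the boundary. The first thing I would try is to work with the endpoint pairs. Set $y_n:=g_n^{-1}v_n^-$. If $y_n\to w^-$, then $(w_n^+,y_n)\to(w^+,w^-)$. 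The flow line of $g_n^{-1}v_n$ consists of all points $(w_n^+,y_n,t)$, so by continuity it contains points converging to $w$, which lie in $\mathrm{int}(K_2)$ for large $n$. These points must also lie on the \emph{forward} ray rather than the backward one. To see this, compare the time coordinates using the cocycle: the point of $g_n^{-1}v_n$ itself has time coordinate $t_{v_n}+\sigma'(g_n^{-1},\xi_n)$. Because $v_n$ and $v_n'$ share the forward endpoint, this differs from the corresponding coordinate for $v_n'$ by a bounded amount. That coordinate for $v_n'$ is roughly $s-s_n\to-\infty$. Hence the starting point of the recentered ray is far in the past, and the point near $w$ is reached in forward time.

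It remains to obtain $y_n\to w^-$. Here I would use that $v_n\in K_2$, so $d(v_n^+,v_n^-)$ is bounded below, and that $g_n^{-1}$ contracts $M$ minus a neighbourhood of the repelling point $\lim g_n$ toward $x^-=w^-$. So it suffices that $v_n^-$ stays away from $\lim g_n$. That point is the limit of $\xi_n$, because the recentered rays from $K_2$ reach $g_n.K_1$ in forward time, as in Lemma \ref{geod ball and shadows}. The bounded-below separation $d(v_n^+,v_n^-)$ then gives what is needed. I expect this endpoint bookkeeping to be the delicate part, and it may force enlarging $F$ to include the finitely many $g$ with $g.K_1$ close to $K_2$.
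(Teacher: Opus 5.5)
Your argument is correct and is essentially the paper's. The paper takes $F$ to be the set of $h$ with $h.K_1$ meeting $\bigcup_{s\in[-L,L]}\tilde{\psi}^s(K_2)$, which plays the role of your proper-discontinuity step forcing $s_n\to\infty$, and then invokes Corollary \ref{flow lines with same endpoint are close}. The proof of that corollary is exactly the recentering-by-$g_n^{-1}$ limit argument you carry out inline, and you are more careful than the paper about the time coordinate.

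The endpoint step $y_n\to w^-$ is also quicker than you feared, so you do not need the detour through shadows. Since $g_n^{-1}v_n\to x^-$ in $\overline{M^{(2)}\times\R}$ while its forward endpoint tends to $w^+\neq x^-$, the description of that topology forces both $y_n\to x^-$ and the time coordinate of $g_n^{-1}v_n$ to tend to $-\infty$.
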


\begin{proof}
    Define $$F: = \left\{ h \in \Gamma\text{ }|\text{ } h.K_1 \cap \left(\bigcup_{s \in [-L,L]}\tilde{\psi}^s(K_2)\right) \neq \emptyset \right\}.$$ Then Corollary \ref{flow lines with same endpoint are close} implies that if $\tilde{\psi}^s(v') \cap g.K_1 \neq \emptyset$ for some $s$ and $g \in \Gamma \smallsetminus F$, then the flow line $\tilde{\psi}^s(v)$ has to intersect $ g.K_2$, which gives a contradiction.
\end{proof}

Lemma \ref{flow lines with same endpoints need to avoid the same set} provides a way to describe points ``near" the set $\Lambda_{K_2^c}$. In order to make the definition of neighborhoods around $\Lambda_{K_2^c}$ precise, we need to describe the topology on $\tilde{\Omega}_{\Gamma} \sqcup \Lambda_{\Gamma}.$
  
As mentioned before, the flow space $\tilde{\Omega}_{\Gamma}$ admits a compactifying topology (we refer the readers to Section 7.1 in \cite{2024arXiv240409718B} for a detailed description of the basis of this topology), and its closure is $\tilde{\Omega}_{\Gamma} \sqcup \Lambda_{\Gamma}$. Furthermore, by fixing $o \in K_0 \subset  \tilde{\Omega}_{\Gamma}$, we can identify $\Gamma \sqcup \Lambda_{\Gamma}$ with the closure of its orbit $\overline{\Gamma.o} \subset \tilde{\Omega}_{\Gamma}\sqcup \Lambda_{\Gamma}$. It is not hard to see that by construction the subspace topology restricted to $(\Gamma.o\sqcup \Lambda_{\Gamma} )$ induces a compactifying topology on $\Gamma\sqcup \Lambda_{\Gamma}$. This is equivalent to the topology on $\Gamma \sqcup \Lambda_{\Gamma}$ induced by the metric $d$ defined in Section \ref{cont cocycle} \cite[Proposition 2.3]{2024arXiv240409713B}. For the rest of the section, we will not distinguish between $\Gamma \sqcup \Lambda_{\Gamma}$ and $\Gamma.o \sqcup \Lambda_{\Gamma}$ when the context is clear.

\begin{defn}\label{U_T} 
    We define the set $U_T(K_1, F)\subset \Gamma\sqcup \Lambda_{\Gamma}$ as the set of points $x$ such that \begin{itemize}
        \item  $x \in \Gamma\cap U_T(K_1, F)$ if and only if there exists $v \in K_1$ such that $$x.K_1 \cap \Tilde{\psi}^{[0,\infty)}(v)  \neq \emptyset \text{ and } \Tilde{\psi}^{[0,T]}(v) \cap \Gamma.K_1 \subset F.K_1;$$
        \item $x \in \Lambda_{\Gamma} \cap U_T(K_1, F)$ if and only if there exists $v\in K_1$ such that $$\tilde{\psi }^{\infty}(v) = x \text{ and } \Tilde{\psi}^{[0,T]}(v) \cap \Gamma.K_1 \subset F.K_1 .$$
    \end{itemize}
\end{defn}

\begin{lemma}\label{nbhd of Lambda_K2c}
    For all $\xi \in \Lambda_{K_2^c}$ and $T>0$, there exists an open neighborhood $I_{\xi, T}$ of $\xi$ in $\tilde{\Omega}_{\Gamma}\sqcup \Lambda_{\Gamma}$ such that $I_{\xi, T} \subset U_T(K_1,F)$.
\end{lemma}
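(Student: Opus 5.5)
Fix $\xi \in \Lambda_{K_2^c}$ and $T>0$, and choose a witness $v_0=(\xi,\xi_-,t_0)\in K_2$ with $\tilde{\psi}^{[0,\infty)}(v_0)\cap\Gamma.K_2\subset K_2$. The plan is to argue by contradiction using sequences, as in Lemma \ref{disjoint flow lines} and Corollary \ref{flow lines with same endpoint are close}. The compactifying topology on $\tilde{\Omega}_{\Gamma}\sqcup\Lambda_{\Gamma}$ is metrizable, so it suffices to prove the following. If $x_n\to\xi$ with $x_n\in\Gamma.o\sqcup\Lambda_{\Gamma}$, then $x_n\in U_T(K_1,F)$ for all $n$ large. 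The required open set $I_{\xi,T}$ is then a small ball around $\xi$ intersected with (or taken in) the orbit closure. I will treat the two cases of Definition \ref{U_T} at once by attaching to each $x_n$ a point $v_n\in K_1$. If $x_n=\eta_n\in\Lambda_\Gamma$, take $v_n=(\eta_n,\zeta_n,s_n)\in K_1$ with forward endpoint $\eta_n$; this is possible once $\eta_n$ is close to $\xi$ and we use $K_0\subset K_1$ together with a point $\zeta$ at distance $\ge\eps_0$. If $x_n=g_n\in\Gamma$, take $v_n\in K_1$ whose forward ray meets $g_n.K_1$. Since $g_n\to\xi$ and $g_n.K_1$ shrinks to $\xi$, the forward endpoints of such $v_n$ converge to $\xi$. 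For the existence of $v_n$, use the argument of Lemma \ref{flow line through translates} with $K_0\subset K_1$, modified so that the forward endpoint is near $\xi$. Concretely, pick $v_n$ with $v_n^+\in S_\eps(g_n)$ via Fact \ref{properties of shadows}.

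The core step is to show that the segments $\tilde{\psi}^{[0,T]}(v_n)$ meet only translates $h.K_1$ with $h\in F$. Pass to a subsequence so that $v_n\to v_\infty=(\xi,\zeta,s)\in K_1\subset K_2$. Now $v_\infty$ and $v_0$ have the same forward endpoint $\xi$. Moreover, $\tilde{\psi}^{[0,\infty)}(v_0)$ avoids $g.K_2$ for every $g\neq e$. Lemma \ref{flow lines with same endpoints need to avoid the same set} then says that $\tilde{\psi}^{[0,\infty)}(v_\infty)$ avoids $g.K_1$ for all $g\notin F$ (assume $e\in F$, enlarging $F$ if needed). The segments $\tilde{\psi}^{[0,T]}(v_n)$ lie in a fixed compact set $\bigcup_{s\in[0,T]}\tilde{\psi}^s(K_1)$. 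By proper discontinuity, only finitely many translates $h.K_1$ meet this compact set, say $h\in H$ with $H$ finite. For each $h\in H\setminus F$, the compact set $h.K_1$ is disjoint from the compact segment $\tilde{\psi}^{[0,T]}(v_\infty)$, hence at positive distance from it. Continuity of the flow then gives $\tilde{\psi}^{[0,T]}(v_n)\cap h.K_1=\emptyset$ for $n$ large. Because $H$ is finite, for $n$ large $\tilde{\psi}^{[0,T]}(v_n)\cap\Gamma.K_1\subset F.K_1$, which means $x_n\in U_T(K_1,F)$. This contradicts the assumption and produces the neighborhood.

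The main obstacle is the step where I choose $v_n\in K_1$ and assert that a subsequential limit $v_\infty$ has forward endpoint exactly $\xi$. In the $\Gamma$ case this relies on the convergence dynamics: the translates $g_n.K_1$ accumulate only at $\xi$, and flow lines through $K_1$ that reach $g_n.K_1$ have forward endpoint in $S_\eps(g_n)$ by Lemma \ref{geod ball and shadows}. That shadow has diameter tending to $0$ and Hausdorff distance to $g_n$ tending to $0$. I would also need the backward endpoints $\zeta_n$ to stay uniformly separated from $\xi$, so that the limit stays in $K_1$. I expect to handle this by choosing $\zeta_n$ at distance $\ge\min\{\delta_0,\eps_0\}$ in the spirit of the definition of $K_0$, which makes compactness of $K_1$ do the work.
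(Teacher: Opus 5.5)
Your proposal is correct and takes essentially the same route as the paper. The paper also argues by contradiction along sequences in $\Lambda_\Gamma$ or in $\Gamma$ converging to $\xi$, extracts a limit $v\in K_1$ with forward endpoint $\xi$ (only finitely many translates of $K_1$ meet $\bigcup_{s\in[0,T]}\tilde{\psi}^s(K_1)$), and contradicts Lemma \ref{flow lines with same endpoints need to avoid the same set} using the witness $v'\in K_2$ for $\xi\in\Lambda_{K_2^c}$. You run this argument directly rather than by contraposition, and you are more explicit than the paper about the existence of the $v_n$ and about why their forward endpoints converge to $\xi$ in the $\Gamma$ case (via Lemma \ref{geod ball and shadows} and the shrinking shadows).
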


\begin{proof}
    If not, then for some $\xi\in \Lambda_{K_2^c}$ there exist either (1) $\{x_n\} \subset \Lambda_{\Gamma}$ such that $x_n \rightarrow \xi$ and $x_n \notin U_T(K_1,F),$ or 2) $\{g_n\} \subset \Gamma $ such that $g_n.o \rightarrow \xi$ and $g_n \notin U_T(K_1,F).$

    In the first case, there exist $v_n \in K_1, \gamma_n \in \Gamma \smallsetminus F$ such that $$\tilde{\psi}^{\infty}(v_n) = x_n, \text{ and } \tilde{\psi}^{[0,T]}(v_n) \cap \gamma_n.K_1 \neq \emptyset.$$
    Since $K_1$ is compact, so is the set $\bigcup_{s\in [0,T]}\tilde{\psi}^s(K_1).$ Taking limit as $n \rightarrow \infty$, up to passing to a subsequence, we have $v_n \rightarrow v \in K_1$ and $\gamma_n \rightarrow \gamma \in \Gamma\smallsetminus F$. Since $\tilde{\psi}^{[0,T]}(v_n)$ intersects $\gamma_n.K_1$, the limit $\tilde{\psi}^{[0,T]}(v)$ also intersects $\gamma.K_1$. However, this cannot happen as $\tilde{\psi}^{\infty}(v) = \xi \in \Lambda_{K_2^c}$. By definition of $\Lambda_{K_2^c}$, there exists $v' \in K_2$ such that $\tilde{\psi}^{\infty}(v') = \xi,$ and $\tilde{\psi}^{[0,\infty)}(v') \cap \Gamma.K_2 \subset K_2$. Then Corollary \ref{flow lines with same endpoints need to avoid the same set} implies $\tilde{\psi}^{[0,\infty)}(v) \cap \gamma.K_1 = \emptyset$ and gives a contradiction.

    For the second case, there exist $v_n \in K_1$, $t_n \geq T$, and $\gamma_n \in \Gamma\smallsetminus F$ such that $$\tilde{\psi}^{t_n}(v_n) \in  g_n.K_1 , \text{ and } \tilde{\psi}^{[0,T]}(v_n) \cap \gamma_n.K_1 \neq \emptyset.$$ The same limiting argument as in the prior case shows that there exist $v \in K_1$ and $\gamma \in \Gamma \smallsetminus F$ such that $\tilde{\psi}^{\infty}(v)= \xi$ but $\tilde{\psi}^{[0,T]}(v) \cap \gamma.K_1 \neq \emptyset$, and we again have a contradiction. 
\end{proof}

\begin{prop}\label{measure of U_T}
             Define $\mathcal{U}_T(\Lambda_{K_2^c}): = int(U_T(K_1,F))$. We have  $$\lim_{T\rightarrow \infty} \mu\left(\mathcal{U}_T(\Lambda_{K_2^c})\right) = 0,$$ where $\mu$ is a Patterson--Sullivan measure of dimension $\delta$ supported on $\Lambda_{\Gamma}.$
\end{prop}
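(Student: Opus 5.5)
The plan is to cover $\mathcal{U}_T(\Lambda_{K_2^c})\cap\Lambda_\Gamma$ by shadows of elements of $\Gamma_{K_1}$ of large magnitude, and then sum their measures with the Shadow Lemma (Theorem \ref{shadow lemma}), using that $\delta_\sigma(\Gamma_{K_1})<\delta$.

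\emph{Step 1: a covering by shadows.} Since $\mu$ is supported on $\Lambda_\Gamma$, it suffices to estimate $\mu(U_T(K_1,F)\cap\Lambda_\Gamma)$. Take $x\in U_T(K_1,F)\cap\Lambda_\Gamma$, and choose $v\in K_1$ with $\tilde\psi^\infty(v)=x$ and $\tilde\psi^{[0,T]}(v)\cap\Gamma.K_1\subset F.K_1$. Set $D_F:=\max_{f\in F}\sup\{t: \tilde\psi^t(K_1)\cap f.K_1\neq\emptyset\}$, which is finite because $F$ is finite and $\Gamma$ acts properly. There are two cases.
\begin{enumerate}
\item The ray $\tilde\psi^{[0,\infty)}(v)$ meets some translate $\gamma.K_1$ with $\gamma\notin F$, at a first time $t_\gamma>T$. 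Arguing as in the Claim in Proposition \ref{est of magnitude}, we find $h_1\in F\cup\{e\}$ and $h_2$ in the finite set $S'$ of Proposition \ref{containment of Gamma_K} such that $\gamma':=h_1^{-1}\gamma h_2\in\Gamma_{K_1}$. The segment between $h_1.K_1$ and $\gamma.K_1$ has length at least $T-D_F$. By Lemma \ref{geod ball and shadows} and Proposition \ref{est of magnitude}, $x\in S_\eps(\gamma)$ and $\|\gamma\|_{\sigma}\geq \|\gamma\|_{\sigma'}/C$. We therefore need $\|\cdot\|_\sigma\to\infty$ uniformly as $\|\cdot\|_{\sigma'}\to\infty$. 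If this comparison is not available, I would instead use Lemma \ref{geod ball and shadows} with $\sigma'$ to obtain escape, and then Fact \ref{prop of cocycle}(3) on the set $\{\gamma':\|\gamma'\|_{\sigma'}\ge T-C\}$, whose complement in $\Gamma_{K_1}$ is finite and shrinks to the empty set as $T\to\infty$.
\item The ray never meets $\Gamma.K_1\smallsetminus F.K_1$. Then $x$ lies in the finite union $\bigcup_{f\in F}f.\Lambda_{K_1^c}$, a set that does not depend on $T$.
\end{enumerate}

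\emph{Step 2: summing.} By Step 1, $\mu(\mathcal U_T)$ is at most
\[
\sum_{h_1,h_2}\ \sum_{\gamma'\in\Gamma_{K_1},\ \|\gamma'\|\ge T'}\mu\bigl(S_\eps(h_1\gamma'h_2^{-1})\bigr)\;+\;\mu\Bigl(\bigcup_{f\in F} f.\Lambda_{K_1^c}\Bigr),
\]
where $T'\to\infty$ as $T\to\infty$. By the Shadow Lemma and Fact \ref{prop of cocycle}, the first term is bounded by $C\sum_{\gamma'\in\Gamma_{K_1},\|\gamma'\|\ge T'}e^{-\delta\|\gamma'\|_\sigma}$. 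This is a tail of $Q_{\Gamma_{K_1},\sigma}(\delta)$, which converges because $\delta_\sigma(\Gamma_{K_1})<\delta$, so the first term tends to $0$.

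\emph{Step 3, the main obstacle: the leftover term $\mu(f.\Lambda_{K_1^c})$.} We need $\mu(\Lambda_{K_1^c})=0$, and there is a risk of circularity, since divergence of $Q_{\Gamma,\sigma}(\delta)$ is exactly what this proposition is meant to prove. I would avoid the circularity as follows. Any $\xi\in\Lambda_{K_1^c}$ lies, for every $T$, in $U_T(K_1,\{e\})$ with no bounded-time exits, hence in $\bigcap_T U_T$. To handle these points, I would redefine the covering in Step 1 with $T$ replaced by the first exit time past the finite cluster $F.K_1$, which shows that $\Lambda_{K_1^c}$ is contained in $\limsup$ of shadows of $\Gamma_{K_1}$-type elements. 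If that fails, I would use the $\Gamma$-quasi-invariance of $\mu$ together with the fact that the sets $\gamma.\Lambda_{K_1^c}$, for $\gamma$ ranging over $\Gamma_{K_1}$-words, are shadowed by disjoint-ish shadows, and then apply a Borel--Cantelli estimate using the convergent series $Q_{\Gamma_{K_1},\sigma}(\delta)$ to conclude that the measure is zero. Either route gives $\lim_{T\to\infty}\mu(\mathcal U_T(\Lambda_{K_2^c}))=0$.
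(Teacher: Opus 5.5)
Your Steps 1 and 2 are fine and match the counting in the paper: a tail of $Q_{\Gamma_{K_1},\sigma}$ over $\bar g\in\Gamma_{K_1}$ with $t_{\bar g}$ large. Step 3, however, is a genuine gap, and it is the heart of the matter rather than a leftover. By Lemma \ref{nbhd of Lambda_K2c}, $\Lambda_{K_2^c}\subset\mathcal{U}_T(\Lambda_{K_2^c})$ for every $T$. So the proposition contains the statement $\mu(\Lambda_{K_2^c})=0$, which is exactly what the proof of Theorem \ref{spr implies divergent} extracts from it.

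Neither of your routes shows $\mu\bigl(\bigcup_{f\in F}f.\Lambda_{K_1^c}\bigr)=0$.
\begin{itemize}
\item The first fails because the ray witnessing $\xi\in\Lambda_{K_1^c}$ meets $\Gamma.K_1$ only in $K_1$, so it passes no far translates whose shadows could contain $\xi$. Indeed, a point lying in infinitely many $\eps$-shadows of an escaping sequence is conical by Fact \ref{alt char for conical}. Yet in the geometrically finite case $\Lambda_{K_1^c}$ contains bounded parabolic points, which are not conical.
\item The second is not yet an argument. Borel--Cantelli with a convergent series shows that almost every point lies in only finitely many of the shadows, which is the wrong direction.
\end{itemize}
More fundamentally, quasi-invariance, the shadow estimates and convergence of $Q_{\Gamma_{K_1},\sigma}$ at the dimension of the measure all hold equally for conformal densities of any dimension $s>\delta$. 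Take a geometrically finite Kleinian group with cusps, which is SPR by Theorem \ref{geom finite is spr}. It carries purely atomic densities of such dimensions on a parabolic orbit. These charge $\Lambda_{K_2^c}$, so for them $\mu(\mathcal{U}_T(\Lambda_{K_2^c}))$ does not tend to $0$. Any valid proof must therefore use that $\delta$ is the critical exponent of $\Gamma$, and your plan never does.

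The paper supplies this input by never estimating $\mu$ on the boundary directly. It takes $\mu$ to be the weak-$*$ limit of Patterson's measures
\[
\mu_s=\tilde Q_{\Gamma,\sigma}(s)^{-1}\sum_\gamma\chi(\|\gamma\|_\sigma)e^{-s\|\gamma\|_\sigma}D_\gamma,\qquad s\searrow\delta .
\]
The slowly varying weight $\chi$ is needed because $Q_{\Gamma,\sigma}(\delta)$ may converge a priori. Because $\mathcal{U}_T(\Lambda_{K_2^c})$ is open in $\Gamma\sqcup\Lambda_\Gamma$ (this is why the interior is taken), one has $\mu(\mathcal{U}_T)\le\liminf_{s\searrow\delta}\mu_s(U_T(K_1,F)\cap\Gamma)$.

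For an orbit point $\gamma\in U_T(K_1,F)\smallsetminus F$, the witnessing ray actually reaches $\gamma.K_1$ after time $T$, so your case (2) never occurs. After the last $F$-translate $h.K_1$, the ray meets a first non-$F$ translate $g.K_1$. Then $\bar g=h^{-1}g\in\Gamma_{K_1}$, $t_{\bar g}>T-R$, and $\gamma\in h.\mathcal{O}_{K_1}(\bar g.K_1)$. Quasi-invariance of $\mu_s$ together with Corollary \ref{cor of geod ball and shadow} bounds the $\mu_s$-mass of this set by roughly $e^{-(s-\eps')\|\bar g\|_\sigma}$. Summing gives a tail of $Q_{\Gamma_{K_1},\sigma}(\delta-\eps')$, which tends to $0$ since $\delta-\eps'>\delta_\sigma(\Gamma_{K_1})$.

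In short, mass that $\mu$ could put on $\Lambda_{K_2^c}$ is detected through orbit points accumulating on it, and those orbit points do pass far translates of $K_1$. The conclusion is thereby established for this particular Patterson--Sullivan measure. For an arbitrary one it follows only a posteriori, from divergence and the uniqueness in Theorem \ref{dichotomy}.
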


\begin{proof}
     Since we do not have $Q_{\Gamma,\sigma}(\delta) = +\infty$ apriori, we will look at the modified Poincar\'e series $$\Tilde{Q}_{\Gamma,\sigma}(s):= \sum_{\gamma\in \Gamma} \chi\left(||\gamma||_{\sigma} \right)\cdot \exp{\left(-s\cdot ||\gamma||_{\sigma}\right)}$$ where $\chi: \R \rightarrow \R_{\geq 1}$ is a non-decreasing function such that for all $\eps >0$ there exists $R>0$ with $\chi(r+t)\leq \exp{(\eps t)}\cdot \chi(r)$ for all $r\geq R$ and $t\geq 0.$  This construction originates from Patterson \cite{10.1007/BF02392046}, where he showed that there exists such function $h$ that makes the series $\Tilde{Q}_{\Gamma,\sigma}(s)$ diverges at $s= \delta $ and converges for all $s >\delta.$

    One can construct a sequence of Borel probability measures on $\Gamma \sqcup M$ for each $s > \delta$: $$\mu_s:= \frac{1}{\Tilde{Q}_{\Gamma,\sigma}(s)} \sum_{\gamma\in \Gamma} \chi\left(||\gamma||_{\sigma} \right)\cdot \exp{\left(-s\cdot ||\gamma||_{\sigma}\right)}\cdot D_{\gamma}$$ where $D_{\gamma}$ is the Dirac measure at $\gamma$. Taking the weak-* limit of $\mu_s$ as $s \searrow \delta$ gives us a Patterson--Sullivan measure $\mu$ of dimension $\delta$ with support in $\Lambda_{\Gamma}$ (see details in \cite[Section 4]{2024arXiv240409713B}).

   Since $\mathcal{U}_T(\Lambda_{K_2^c})$ is open for all $T$, the weak-* convergence gives \begin{equation}\label{19}
        \mu(\mathcal{U}_T(\Lambda_{K_2^c})) \leq \liminf_{s \searrow \delta} \mu_s(\mathcal{U}_T(\Lambda_{K_2^c})).
    \end{equation}
     By construction $\mathcal{U}_T(\Lambda_{K_2^c})\subset U_T(K_1,F)$ and $\mu_s$ is supported on $\Gamma$, hence it suffices to examine the behavior of $\mu_s(U_T(K_1,F)\cap \Gamma)$ to prove the lemma. 

For every $g \in \Gamma$, define $\mathcal{O}_{K_1}(g.K_1) \subset \Gamma$ as the set of points $y \in \Gamma$ such that there exist $ v\in K_1$ and $t_g>0$ where $$\tilde{\psi}^{t_g}(v) \in g.K_1, \text{ } \Tilde{\psi}^{[0,t_g]}(v)|\cap \Gamma.K_1\subset K_1 \cup g.K_1 \text{, and }  y.K_1 \cap \Tilde{\psi}^{[t_g,\infty)}(v) \neq \emptyset.$$
For all $\gamma \in \Gamma \cap U_T(K_1,F),$ there exist $v \in K_1$, $h \in F$ and $g \in \Gamma\smallsetminus F$ and $t_h, t_{g}>0$ such that  $$\tilde{\psi}^{[t_h,t_{g}]}(v) \cap \Gamma.K_1 \subset (h.K_1 \cup g.K_1) \text{ and } \tilde{\psi}^s(v) \cap \gamma.K_1 \neq \emptyset.$$ 
Let $R: = \max_{v' \in K_1} \{t >0 \text{ }|\text{ } \tilde{\psi}^t(v) \in F.K_1\}.$ Then by definition of $U_T(K_1,F)$, we have $$ t_{h^{-1}g}:= t_{g}-t_h > T-R \text{, } h^{-1}g \in \Gamma_{K_1} \text{, and } h^{-1}\gamma \in \mathcal{O}_{K_1}(h^{-1}g .K_1).$$
Then 
\begin{equation}\label{11}
\mu_s(U_T(K_1,F) \cap \Gamma )  \leq \mu_s\left(\bigcup_{h\in F} \bigcup_{\substack{h^{-1}g \in \Gamma_{K_1}  \\ t_{h^{-1}g}>T-R}} h.\mathcal{O}_{K_1}(h^{-1}g.K_1)  \right).
\end{equation} 
 We now estimate the measure of $h.\mathcal{O}_{K_1}(h^{-1}g.K_1)$. Let $\mathcal{O}_{g^{-1}.K_1}(K_1):= g^{-1}. \mathcal{O}_{K_1}(g.K_1)$ for all $g \in \Gamma$. Note that $$\mu_s\left(h.\mathcal{O}_{K_1}(h^{-1}g.K_1) \right)= \left(g^{-1}_*\mu_s\right)\left( \mathcal{O}_{g^{-1}h.K_1}(K_1)\right).$$ Let $\bar{g}:= h^{-1}g.$
The measures $\mu_s$ and $(g^{-1})_* \mu_s$ are related as follow: for every $g \in \Gamma$ and $A \subset \Gamma \sqcup \Lambda_{\Gamma}$ measurable, we have
\begin{equation}\label{17}
  \left(g^{-1}_*\mu_s\right)\left(A \right) \leq  \int_{\xi \in A} \chi_{g^{-1}}(\xi) \exp{\left(-s\cdot f_{g^{-1}}^-(\xi)\right)} d\mu_s(\xi)
\end{equation}
where $\chi_{g^{-1}}$ extends the function $\chi$ that appears in the modified Poincar\'e series from $\Gamma $ to identity on the boundary,
and $f_{g^{-1}}^-$ is a lower semicontinuous extension of the function $ ||g\xi||_{\sigma} -||\xi||_{\sigma}$ to $M$. Readers can check the proof of Theorem 4.1 in \cite{2024arXiv240409713B} for details.

Since $\mathcal{O}_{\bar{g}^{-1}K_1}(K_1) \subset \Gamma$, we can write the integral in Equation \ref{17} as
\begin{equation}\label{18}
      \mu_s(h.\mathcal{O}_{K_1}(\bar{g}.K_1))= \int_{\xi \in \mathcal{O}_{\bar{g}^{-1}K_1}(K_1) } \frac{\chi(||g\xi||_{\sigma})}{\chi(||\xi||_{\sigma})} \exp{\left(-s\cdot \left(||g\xi||_{\sigma} -||\xi||_{\sigma}\right)\right)} d\mu_s(\xi).   
\end{equation}

Since $\bar{g}= h^{-1}g$ and $h \in F$ finite, according to Corollary \ref{cor of geod ball and shadow} and the first and second property in Fact \ref{prop of cocycle}, we can find $C:= C(\eps)$ such that $$||\bar{g}||_{\sigma}-C\leq ||g\xi||_{\sigma}-||\xi||_{\sigma}\leq ||\bar{g}||_{\sigma}+C$$ for all $g$ such that $\bar{g} \in \Gamma_{K_1}$ for some $h \in F$ and $t_{\bar{g}} > R'$ and all $\xi \in \mathcal{O}_{\bar{g}^{-1}K_1}(K_1).$

Fix $\eps'>0$, let $F_{\eps'} \subset \Gamma $ be the finite set such that $\chi(||\xi||_{\sigma}+t) > \exp (\eps't)\cdot \chi(||\xi||_{\sigma})$ for all $\xi \in F_{\eps'}$ and $t >0.$
We can rewrite Equation \ref{18} as 
  \begin{equation}
      \begin{split}
       \mu_s\left(h.\mathcal{O}_{K_1}(\bar{g}.K_1) \right)& \leq  C\cdot \int_{\xi \in \mathcal{O}_{\bar{g}^{-1}K_1}(K_1)} \frac{\chi(||g\xi||_{\sigma})}{\chi(||\xi||_{\sigma})} \exp{\left(-s\cdot ||\bar{g}||_{\sigma} \right)} d\mu_s(\xi)\\
       &\leq C\cdot \int_{\xi \in \mathcal{O}_{\bar{g}^{-1}K_1}(K_1) } \exp{\left(-(s-\eps')\cdot ||\bar{g}||_{\sigma} \right)} d\mu_s(\xi)  +\mu_s(F_{\eps'}) \\
       &\leq C\cdot \exp{\left(-(s-\eps')\cdot ||\bar{g}||_{\sigma}\right)} \cdot \mu_s(\mathcal{O}_{\bar{g}^{-1}K_1}(K_1) ) +\mu_s(F_{\eps'})\\
      & \leq C\cdot \exp{\left(-(s-\eps')\cdot ||\bar{g}||_{\sigma}\right)} +\mu_s(F_{\eps'}) .
  \end{split}
  \end{equation}  
  as $\mu_s$ are all probability measures.

 Since $F_{\eps'}$ is bounded, and the support of $\mu$ is in $\Lambda_{\Gamma}$, $\mu_s(F_{\eps'}) \rightarrow 0$, Equation \ref{19} turns into \begin{equation*}
     \begin{split}
     \mu(\mathcal{U}_T(\Lambda_{K^c_2}) ) &\leq \liminf_{s \searrow\delta} C\cdot  \sum_{h \in F} \sum_{\substack{\bar{g} \in \Gamma_{K_1} \\ t_{ \bar{g}}>T-R} } \exp{(-(s-\eps')\cdot ||\bar{g}||_{\sigma})}\\
     & \leq C \cdot |F|\cdot \sum _{\substack{\bar{g} \in \Gamma_{K_1} \\ t_{\bar{g}>T} }} \exp{(-(s-\eps')\cdot ||\bar{g}||_{\sigma})},
 \end{split}
 \end{equation*} where $|F|$ is the cardinality of the finite set $F$.

 The SPR condition ensures that $\delta> \delta_{\sigma}(\Gamma_{K_1})$, hence for all $\eps'$ small such that $\delta-\eps' > \delta_{\sigma}(\Gamma_{K_1})$, the series $$\sum_{\gamma \in \Gamma_{K_1} } \exp{(-(s-\eps')\cdot ||\gamma||_{\sigma})}$$ converges for all $s > \delta$. This implies as $T\rightarrow \infty$, the tail decreases exponentially fast, i.e. it converges to $0$, which concludes the proof of Proposition \ref{measure of U_T}.
\end{proof}

\begin{proof}[Proof of Theorem \ref{spr implies divergent}]

By Lemma \ref{nbhd of Lambda_K2c}, $\Lambda_{K_2^c} \subset \bigcap_{T> 0} \mathcal{U}_T(K_1,F)$, and we then have 
    \begin{equation}\label{10}
    \begin{split}
        \mu(\mathcal{L}_{K_2^c})& \leq \mu\left(\Gamma.\left(\bigcap_{T> 0} \mathcal{U}_T(K_1,F)\right)\right)\\
        &\leq \sum_{\gamma \in \Gamma} (\gamma_*\mu)\left(\bigcap_{T\geq 0} \mathcal{U}_T(K_1,F)\right).
    \end{split}   
    \end{equation}
    
    Proposition \ref{measure of U_T} ensures that $\mu\left(\bigcap_{T>0} \mathcal{U}_T(K_1,F)\right) =0$, and since all $\gamma_*\mu$ are absolute continuous with respect to each other, this implies $\gamma_*\mu(\bigcap_{T>0}\mathcal{U}_T(K_1,F)) = 0$. We then have $\mu(\mathcal{L}_{K^c_2})=0$, and respectively $\mu(\Lambda_{\Gamma}^{con})= 1$. Then the dichotomy in Theorem \ref{dichotomy} implies $\Gamma$ is divergent, which concludes the proof of Proposition \ref{spr implies divergent}.
\end{proof}

\section{Examples of SPR Subgroups}
As mentioned in the introduction, one of the motivations for this paper is to find examples of subgroups of higher rank Lie groups beyond relative Anosov groups that still admit finite BMS measures.

\subsection{Schottky Products}\label{new example} Now we present a way to construct such examples. This type of construction originated in Maskit \cite{5cd946a3-4546-32c2-8666-da16c4b98fac} for discrete subgroups of isometries of hyperbolic manifolds, and are generalized to many different settings (see Schapira--Tapie \cite{2018arXiv180204991S} in the case of complete negatively curved Riemannian manifolds, Dey--Kapovich \cite{2023arXiv230102354D} for Anosov groups, and Danciger--Gu\'eritaud--Kassel \cite{2024arXiv240709439D} for automorphisms of properly convex domains in projective spaces etc.). 

Let $N$, $H$ act on a compact metrizable space $M$ as convergence groups. The subgroups $N$ and $H$ are said to be in \emph{Schottky position} if there exist disjoint compact sets $U_N$, $U_H \subset M$ such that $$n. \left(M \setminus U_N\right) \subset U_N \text{ and } h. \left(M \setminus U_H\right) \subset U_H $$ for all $n \in N\setminus \{e\}$ and all $h \in H \setminus \{e\}$. 

By a ping-pong lemma type argument, the group  generated by $N$ and $H$, called the \emph{Schottky product} of $N$ and $H$, is a free product. Let $\Gamma:= N\ast H$ denote the Schottky product, and we further assume that this Schottky product acts as a convergence group. 




We want to show that the entropy at infinity behaves nicely under the Schottky product. For the rest of the section, let $(\sigma,\bar{\sigma}, G)$ be a continuous GPS system associated with $\Gamma$. Let $||\cdot||_{\sigma}$ be the $\sigma-$magnitude defined on $\Gamma$ associated with $\sigma$. Let $\sigma'$ be an expanding cocycle that defines the action of $\Gamma$ on $\Tilde{\Omega}_{\Gamma}= \Lambda_{\Gamma}^{(2)} \times \R$. Note that since $H, N$ are subgroups of $\Gamma$, $\Tilde{\Omega}_H$ and $\Tilde{\Omega}_N$ are subspaces of $\Tilde{\Omega}_{\Gamma}$, $(\sigma,\bar{\sigma}, G)$ is also a continuous GPS system for $N$ and $H$, and the action of $H$, $N$ on $\Tilde{\Omega}_H$, $\Tilde{\Omega}_N$ are also defined by $\sigma'$,  and hence is compatible with the $\Gamma$-action.

\begin{thm}\label{relation of entropy at infinity}
    Let $N,H,$ and $\Gamma$ be defined as above. We have $$\delta_{\infty}(\Gamma) = \max \{\delta_{\infty}(H), \delta_{\infty}(N)\},$$ where $\delta_{\infty}$ is the entropy at infinity defined in Section \ref{entropy at infinity}.
\end{thm}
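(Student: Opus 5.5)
We prove the two inequalities separately. Throughout we use Remark \ref{rewrite delta infty}: each entropy at infinity is a limit along an exhausting sequence of compact sets. This lets us pick convenient compact sets for $\Gamma$, $H$ and $N$ and compare them directly. We choose the exhausting compacts compatibly. Take $K^\Gamma_i \subset \tilde{\Omega}_\Gamma$ exhausting. Set $K^H_i := K^\Gamma_i \cap \tilde{\Omega}_H$ and $K^N_i := K^\Gamma_i\cap\tilde{\Omega}_N$, enlarged if necessary to contain the corresponding $K_0$. These exhaust $\tilde{\Omega}_H$ and $\tilde{\Omega}_N$, since these are closed subspaces of $\tilde{\Omega}_\Gamma$: $\Lambda_H,\Lambda_N\subset\Lambda_\Gamma$ are closed.

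\emph{Lower bound $\delta_\infty(\Gamma)\ge\max\{\delta_\infty(H),\delta_\infty(N)\}$.} Take $h\in H_{K^H}$. There is then a flow line in $\tilde{\Omega}_H$ from $K^H$ to $h.K^H$ that meets no other $H$-translate of $K^H$. We want to show that, up to finitely many exceptions and bounded left/right multiplication by a finite set $S$, $h$ lies in $\Gamma_{K^\Gamma}$. The flow line has both endpoints in $\Lambda_H\subset U_H$ (for $H$ infinite, the limit set of $H$ lies in $U_H$ by ping-pong). Suppose it met a translate $\gamma.K^\Gamma$ with $\gamma\notin H$. Write $\gamma$ as a reduced word. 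If $\gamma$ begins with a nontrivial element of $N$, then $\gamma.K^\Gamma$ lies, up to a bounded region, near $U_N$. Since flow lines with both endpoints in $U_H$ stay in a compact neighborhood in the compactification avoiding $U_N$ (this uses the compactification of $\overline{M^{(2)}\times\R}$ and $U_H\cap U_N=\emptyset$), only finitely many such $\gamma$ of this form can occur. Those that do are absorbed by the same trick as in the Claim in Proposition \ref{est of magnitude}: replace $h$ by $g h g'$ with $g,g'$ in a finite set. Then $H_{K^H}\setminus S\subset\bigcup_{g,g'\in S} g\Gamma_{K^\Gamma}g'$, and Fact \ref{prop of cocycle} gives $\delta_\sigma(H_{K^H})\le\delta_\sigma(\Gamma_{K^\Gamma})$, exactly as in Lemma \ref{decreasing entropy}. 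Taking limits yields the inequality.

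\emph{Upper bound.} This is the main obstacle. Take $\gamma\in\Gamma_{K^\Gamma}$ and write it as a reduced word $\gamma=a_1a_2\cdots a_m$, alternating between $H\setminus\{e\}$ and $N\setminus\{e\}$. The key claim is that, for $K^\Gamma$ large enough (containing a fixed compact $K_*$ that every flow line crossing between $U_H$ and $U_N$ must pass through, up to the $\Gamma$-action), we have $m\le 2$. Moreover, each letter $a_j$ then lies in $S\cdot H_{K^H}\cdot S$ or $S\cdot N_{K^N}\cdot S$ for a finite $S$. The idea is this: a flow line from $K$ to $\gamma.K$ passes successively near $a_1.K_*$, $a_1a_2.K_*$, and so on. This follows from ping-pong: the endpoint $\gamma.\xi$ lies in the nested sets $a_1U_{\cdot}\supset a_1a_2U_{\cdot}\supset\cdots$. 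One would prove it by the limiting/compactness arguments of Lemma \ref{disjoint flow lines} and Corollary \ref{flow lines with same endpoint are close}. Since $\gamma\in\Gamma_{K}$ forbids intermediate visits, the word has bounded length. We would also need the segments between consecutive visits to be excursions of $H$ or $N$ outside $K^H$ or $K^N$.

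Granting this, we get $\Gamma_{K^\Gamma}\subset S\,(H_{K^H}\cup N_{K^N}\cup H_{K^H}S N_{K^N}\cup N_{K^N}SH_{K^H})\,S$. By Fact \ref{prop of cocycle} (the second item, using that $a_1^{-1}$ and $a_2$ lie in disjoint sets $U_H,U_N$ and so are $\eps$-separated), we have $\|a_1 s a_2\|_\sigma\ge\|a_1\|_\sigma+\|a_2\|_\sigma-C$. The Poincaré series of $\Gamma_{K^\Gamma}$ is therefore bounded by a constant times $Q_{H_{K^H}}(s)+Q_{N_{K^N}}(s)+Q_{H_{K^H}}(s)Q_{N_{K^N}}(s)$. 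This converges for $s>\max\{\delta_\sigma(H_{K^H}),\delta_\sigma(N_{K^N})\}$. Passing to the limit in $i$ gives $\delta_\infty(\Gamma)\le\max\{\delta_\infty(H),\delta_\infty(N)\}$.

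The delicate point, which we would attack first, is the bounded-word-length claim. The approach is to choose $K^\Gamma\supset K_*$, where $K_*$ is a compact set of flow lines joining $U_H$ to $U_N$ (nonempty and compact because $U_H,U_N$ are disjoint compacta), with $t$ in a bounded window. We would then show by contradiction and compactness that any long word forces a flow line to hit an intermediate translate $a_1\cdots a_j.K_*$.
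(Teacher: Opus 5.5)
Your skeleton matches the paper's. Ping-pong, together with a compact set crossed by every flow line joining $U_N$ to $U_H$ (your $K_*$, the set $K$ of Lemma \ref{visibility}), shows that $\Gamma$-excursions are essentially $H$- or $N$-excursions, and conversely.

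\textbf{Gap in the lower bound.} You only exclude intermediate translates $\gamma.K^\Gamma$ with $\gamma\in\Gamma^N$. Translates $h'\gamma'.K^\Gamma$ with $h'\in H\smallsetminus\{e\}$ and $\gamma'\in\Gamma^N$ lie near $h'.U_N\subset U_H$, not near $U_N$. So the fact that flow lines of $\tilde{\Omega}_H$ avoid a neighbourhood of $U_N$ says nothing about them. Pulling back by $h'$ only shows that $\gamma'$ lies in the finite set $F:=\{f\in\Gamma^N : f.K^\Gamma\cap\tilde{\Omega}_H\neq\emptyset\}$. The element $h'$ still ranges over all of $H$, so such translates can be met deep inside the excursion. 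Moreover, $F.K^\Gamma\cap\tilde{\Omega}_H$ is in general not contained in $K^H=K^\Gamma\cap\tilde{\Omega}_H$, so meeting $h'\gamma'.K^\Gamma$ does not force meeting $h'.K^H$. Hence the absorption trick of Proposition \ref{est of magnitude} does not apply, and the inclusion of $H_{K^H}\smallsetminus S$ into $\bigcup_{g,g'\in S} g\Gamma_{K^\Gamma}g'$ is not established. This is exactly Case 2 of the Claim in the paper's proof (recentring by the leading $H$-letter), and it is why the paper compares two nested compacts $K_1\subset K_2$. Within your set-up the simplest repair is to enlarge $K^H$ to $\bigl((\{e\}\cup F).K^\Gamma\bigr)\cap\tilde{\Omega}_H$. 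Then $\Gamma.K^\Gamma\cap\tilde{\Omega}_H=H.K^H$, and an $H$-excursion of $K^H$ is, up to a finite set on each side, a $\Gamma$-excursion of $K^\Gamma$.

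\textbf{The upper bound.} The step you call the main obstacle is a short argument in the paper (Lemma \ref{relating Gamma_K}). Pull the witnessing segment back by its first letter $a_1$. Unless $a_1$ or $a_2\cdots a_m$ lies in a fixed finite set, the pulled-back segment runs from near $U_H$ to near $U_N$ (or vice versa), hence crosses $K_*$. That is, the original segment crosses $a_1.K^\Gamma$ in its interior. Applied to every prefix, this gives $\Gamma_{K^\Gamma}\subset S(H\cup N)S$: word length one, not two. So the product term and the quasi-additivity estimate are unnecessary. Your separation argument there is also off, since for $s\in H\smallsetminus\{e\}$ the element $sa_2$ lies near $s.U_N\subset U_H$.

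What your plan still genuinely lacks is the step you list last. A $\Gamma$-excursion labelled by $a\in H$ is witnessed by a flow line of $\tilde{\Omega}_\Gamma$ whose endpoints need not lie in $\Lambda_H$, so it does not witness $a\in H_{K^H}$. You must produce a flow line of $\tilde{\Omega}_H$ from $K^H$ to $a.K^H$. You then need to show that it avoids $H.K^H$ away from its ends, by a recentring and compactness argument like that of Lemma \ref{disjoint flow lines}. That argument needs $K^H$ to lie in the interior of $K^\Gamma$, not to equal $K^\Gamma\cap\tilde{\Omega}_H$. The paper avoids the issue by taking $H_{K_2}$ with respect to the same compact $K_2\subset\tilde{\Omega}_\Gamma$ and the same flow line, for which $\Gamma_{K_2}\cap H\subset H_{K_2}$ is immediate.
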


The proof of this theorem uses the ideas in Section 8 of \cite{2016arXiv161003255S} and Section 7 of \cite{2018arXiv180204991S}. 
Since $\Gamma$ is a free product of $H$ and $N$, we can express every element $g \in \Gamma$ as a product of elements alternating between $N$ and $H$, i.e $$g = h_1n_1 \cdots h_k n_k$$ with $h_1 \in H, n_k \in N$, and $h_i \in H \setminus \{e\}$ for all $k\geq i >1$ and $n_j \in N\setminus \{e\}$ for all $1\leq j<k.$ Using this, we define $$\Gamma^H:=\{g \in \Gamma\text{ }|\text{ } h_1 \in H\setminus \{e\}\} \quad \text{ and }\quad \Gamma^N:=\{g \in \Gamma\text{ }|\text{ } h_1 \in N\setminus \{e\}\},$$ which gives a partition $\Gamma = \Gamma^H \sqcup \Gamma^N \sqcup \{e\}.$

\begin{lemma}\label{visibility}
    Fix disjoint compact sets $U_N,U_H \subset \Lambda_{\Gamma}$ that allows us to perform a Schottky product between $H$ and $N$, there exist:
    \begin{itemize}
    \item A compact set $K \subset \Tilde{\Omega}_{\Gamma}$ such that for all $x \in U_N$, $y \in U_H$, the flow line defined by $x$ and $y$ intersects $K$ non-trivially.
        \item Compact sets $U_H', U_N' \subset \Tilde{\Omega}_{\Gamma}\sqcup M$ such that $U_H' \cap U_N' = \emptyset$ and $n.K \subset  U_N'$ and $h.K \subset U_H'$ for all but finitely many $n \in \Gamma^N$ and $h \in \Gamma^H$;
        
    \end{itemize}
\end{lemma}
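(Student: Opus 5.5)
We treat the two items separately. The first is a compactness statement in $M^{(2)}$; the second is a ping-pong statement transported to the compactification $\overline{M^{(2)}\times\R}$ of the flow space.

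\emph{First item.} Since $U_N$ and $U_H$ are disjoint compact sets, $\eta:=d(U_N,U_H)>0$, so $U_N\times U_H$ is a compact subset of $M^{(2)}$. The same holds for $U_H\times U_N$, which we include so that both orientations of flow lines are covered. We would take
\[
K:=\bigl((U_N\times U_H)\cup(U_H\times U_N)\bigr)\cap\Lambda_\Gamma^{(2)}\times[-1,1],
\]
and enlarge it by taking its union with $K_0$. This set is compact in $\tilde\Omega_\Gamma$ because $\Lambda_\Gamma$ is closed. Every flow line $(x,y,\R)$ with $x\in U_N$ and $y\in U_H$ meets $K$ at time $0$.

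\emph{Second item.} Let $V_N,V_H$ be disjoint closed neighbourhoods of $U_N,U_H$ in $M$. We define $U_N'$ to be the closure in $\overline{M^{(2)}\times\R}=(M^{(2)}\times\R)\sqcup M$ of the set of $(x,y,t)$ with $x,y\in V_N$, together with $V_N$ itself, and we define $U_H'$ in the same way. These two sets are compact. They are disjoint because, by the description of the compactification (Proposition 7.1 of \cite{2024arXiv240409718B}), any limit point in $M$ of a sequence with both coordinates in $V_N$ lies in $V_N$.

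Now let $g\in\Gamma^N$, written as $g=n_1h_2n_2\cdots$. Ping-pong gives $g.(M\setminus U_N)\subset U_N$, since the leftmost letter lies in $N\setminus\{e\}$. For $(x,y,t)\in K$, at least one of $x,y$ lies in $U_H\subset M\setminus U_N$. The key point is to show that the other coordinate is also sent near $U_N$ for all but finitely many $g$. We argue by contradiction: suppose $g_k\in\Gamma^N$ are distinct and $g_k.K\not\subset U_N'$. By the convergence property, after passing to a subsequence, $g_k|_{M\setminus\{b\}}\to a$ locally uniformly. Then $a\in U_N$, because $g_k(M\setminus U_N)\subset U_N$ and $M\setminus U_N$ is not contained in $\{b\}$. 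Moreover, $\Gamma$ acts on $\overline{M^{(2)}\times\R}$ as a convergence group extending the action on $M$ (Proposition 7.2 of \cite{2024arXiv240409718B}), and $K$ is compact in $\tilde\Omega_\Gamma$. Hence $g_k.K\to a$ in the compactification, which would put $g_k.K$ inside the neighbourhood $U_N'$ of $a$ for large $k$, a contradiction.

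The subtle step is justifying that $g_k.K$ converges to the attracting point $a$, and not to some other point, even when the repelling point $b$ lies in a projection of $K$. We would handle this by quoting the statement, already used in the proofs of Lemma \ref{disjoint flow lines} and Corollary \ref{flow lines with same endpoint are close}, that escaping translates of a fixed compact subset of $\tilde\Omega_\Gamma$ converge to a single point of $M$. Along a subsequence, $g_k.o$ converges to a point of $M$ for a fixed $o\in K$, and this limit is $a$. Here we use the identification of $\Gamma\sqcup\Lambda_\Gamma$ with $\overline{\Gamma.o}$ together with the fact that $g_k\to a$ in the compactifying metric $d$. The argument for $\Gamma^H$ and $U_H'$ is symmetric.
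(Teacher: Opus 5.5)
\textbf{There is a genuine gap in your choice of $U_N'$.} Your first item is fine; it is a direct version of the paper's compactness-by-contradiction argument. The strategy for the second item is also the paper's idea: ping-pong puts attracting points of escaping sequences in $\Gamma^N$ into $U_N$, and escaping translates of the compact set $K$ converge in $\overline{M^{(2)}\times\R}$ to that attracting point. The problem is that the set $\bigl(V_N^{(2)}\times\R\bigr)\sqcup V_N$ is \emph{not} a neighbourhood of a point $a\in U_N$ in $\overline{M^{(2)}\times\R}$. By the very convergence criterion you quote, $(x_n,y_0,n)\to a$ whenever $x_n\to a$, for any fixed $y_0\in V_H$, and none of these points lies in $U_N'$. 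So your last step, that $g_k.K\to a$ puts $g_k.K$ inside the neighbourhood $U_N'$ of $a$, does not follow.

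\textbf{The conclusion actually fails for your $U_N'$.} Take $n\in N\smallsetminus\{e\}$ and $w\in U_H$, and set $y:=n^{-1}w$, which lies in $U_N$ by ping-pong. For any $x\in U_H$ the point $(x,y,0)$ lies in your $K$, but
\[
n.(x,y,0)=\bigl(nx,\,w,\,\sigma'(n,x)\bigr)
\]
has its second coordinate in $U_H$. Hence $n.K\not\subset U_N'$ for every $n\in N\smallsetminus\{e\}$, which is infinitely many elements of $\Gamma^N$ as soon as $N$ is infinite. This is exactly where your heuristic that ``the other coordinate is also sent near $U_N$'' breaks down: it is not. The image is close to $U_N$ in the compactification only because the time coordinate $\sigma'(n,x)$ tends to $+\infty$. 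In the mirror case $n.(y,x,0)=(w,nx,\sigma'(n,y))$, the time $\sigma'(n,n^{-1}w)=-\sigma'(n^{-1},w)$ tends to $-\infty$ instead.

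\textbf{How to repair it.} Let $U_N'$ and $U_H'$ be disjoint compact neighbourhoods of $U_N$ and $U_H$ in the compact metrizable space $\overline{M^{(2)}\times\R}$. These exist because $U_N,U_H$ are disjoint compact subsets of it; intersect with $\tilde\Omega_\Gamma\sqcup M$ if you want them to lie there. With this choice your contradiction argument goes through unchanged. After passing to a subsequence, $g_k.K$ converges uniformly to the attracting point $a\in U_N$, so $g_k.K\subset U_N'$ for large $k$. This is also what the paper implicitly does when it says that limit points of sequences in $\Gamma^N$ lie in $U_N$, which ``guarantees the existence of'' $U_N'$ and $U_H'$.
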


\begin{proof}

We prove the first point by contradiction. If not, we can find sequences $x_n \in U_N$ and $y_n \in U_H$, and an exhausting sequence of compact sets $K_n$ with $K_n \subset K_{n+1}$ such that $(x_n, y_n ,t) \notin K_n$ for all $t \in \R$. After taking subsequences, $x_n \rightarrow x \in U_N$ and $y_n \rightarrow U_H$. By construction, $(x,y,t) \notin \Tilde{\Omega}_{\Gamma}$ for all $t$, which would imply $x= y$. This gives a contradiction, as $U_H$ and $U_N$ are disjoint. 

   In order to prove the second point, we just need to realize $\Lambda_{\Gamma} \subset U_H \cup U_N.$ If not, there exists $x \in \Lambda_{\Gamma}\setminus (U_H \cup U_N).$ By definition of a limit point, there exists a sequence $\{g_n\} \subset \Gamma$, and $y \in M$ such that $g_n|_{M\setminus \{y\}}$ converges to $x$ locally uniformly. Up to taking subsequence, we can assume $\{g_n\} \subset \Gamma^H$ (or $\{g_n\} \subset \Gamma^N$). This gives a contradiction, as for any point $x' \in M \setminus (U_H \cup U_N),$ the Schottky position property ensures $g_n.x' \in U_H$ (or $g_n.x' \in U_N$), hence they cannot converge to $x$. This argument also shows that the limit point for any distinct sequence $\{g_n\} \subset \Gamma^H$ (or $\Gamma^N)$ is in $U_H$ (or $U_N$ respectively), which guaranties the existence of the compact sets $U_H'$ and $U_N'$ as required.
    \end{proof}

Let $K, U_H'$, $U_N'$ be compact sets that satisfy Lemma \ref{visibility}. 
Note that by repeating the proof of the second part in Lemma \ref{visibility}, we can enlarge $K$ so that $K$ contains $K_0$ defined in Section \ref{Gamma_K}, and every flow line that passes through $U_H'$ and $U_N'$ will also pass through the enlarged set which, to abuse notation, will also be denoted by $K$. 

\begin{lemma}\label{relating Gamma_K}
There exists a finite set $S \subset \Gamma$ such that $$\Gamma_{K}  \subset S\cup  H \cup N.$$    
\end{lemma}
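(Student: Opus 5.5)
We argue that every $\gamma \in \Gamma_K$ whose normal form has at least two nontrivial syllables, apart from finitely many exceptions, yields a flow line between $K$ and $\gamma.K$ that must cross an intermediate translate of $K$. That contradicts the definition of $\Gamma_K$.

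Write $\gamma$ in reduced alternating form, $\gamma = a_1 a_2 \cdots a_m$, with each $a_i$ a nontrivial element of $H$ or of $N$ and consecutive letters taken from different factors. If $m \le 1$, then $\gamma \in H \cup N$ and there is nothing to prove. So suppose $m \ge 2$ and $\gamma \in \Gamma_K$. Pick $v \in K$ and $t \ge 0$ with $\tilde{\psi}^t(v) \in \gamma.K$ and $\tilde{\psi}^{[0,t]}(v) \cap \Gamma.K \subset K \cup \gamma.K$. Set $g := a_1$, which lies in $H$ or $N$; by symmetry say $g \in H\setminus\{e\}$. The idea is that the intermediate translate $g.K$ sits between $K$ and $\gamma.K$ in the Schottky picture.

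First we locate $\gamma.K$ and $g.K$. By Lemma \ref{visibility}, outside a finite exceptional set the translates $h.K$ for $h \in \Gamma^H$ lie in $U_H'$, and likewise $n.K \subset U_N'$ for $n \in \Gamma^N$. Since $\gamma \in \Gamma^H$, we get $\gamma.K \subset U_H'$ whenever $\gamma$ avoids the exception set. More precisely, we use $g^{-1}\gamma = a_2\cdots a_m \in \Gamma^N$. Then $g^{-1}\gamma.K \subset U_N'$ unless $g^{-1}\gamma$ lies in a finite set $S_0$, and by the ping-pong property $g^{-1}$ maps the region near $U_N$ into the region near $U_H$.

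The key step is to pull everything back by $g^{-1}$. The flow line through $g^{-1}v$ joins $g^{-1}.K$ to $g^{-1}\gamma.K \subset U_N'$. Because $g^{-1} \in H\setminus\{e\}$, the set $g^{-1}.K$ lies in $U_H'$ when $g$ is outside a finite set. The backward endpoint side of this segment is therefore near $U_H$ and the forward side is near $U_N$. The enlarged $K$ was chosen so that every flow line passing through $U_H'$ and $U_N'$ also passes through $K$. Hence the pulled-back segment meets $K$, so the original segment $\tilde{\psi}^{[0,t]}(v)$ meets $g.K$. The definition of $\Gamma_K$ then forces $g.K$ to be $K$ or $\gamma.K$ up to the finite stabilizer-type overlap. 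That is, $g$ or $g^{-1}\gamma$ lies in the finite set $S_1 := \{s : s.K \cap K \neq \emptyset\}$, which is finite by proper discontinuity (Proposition 10.2 of \cite{2024arXiv240409713B}).

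We also need to handle the case where $g$ is in the exceptional finite set for $H$. In that case we use $a_1a_2$, or symmetrically the last letter $a_m$, as the splitting element instead. Each such fallback adds only finitely many exceptions. Collecting the exceptions, $\gamma$ lies in $S := S_0 S_1 \cup S_1 \cup (\text{exceptions})\cdot S_1$, which is finite up to the bounded-length words produced.

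The main obstacle is making the claim that the segment is ``between'' $g^{-1}.K$ and $g^{-1}\gamma.K$ rigorous. The difficulty is that the segment only meets these translates, and the compactified sets $U_H', U_N'$ live in $\tilde{\Omega}_\Gamma \sqcup M$, not in $M$. I would first try to argue via the endpoints. By Lemma \ref{geod ball and shadows}, once $t$ is large the forward endpoint $x^+$ lies in the shadow $S_\eps(\gamma)$, which lies in $\gamma(M\setminus B_\eps(\gamma^{-1}))$. Ping-pong places this inside $g.U_N$, while the backward endpoint is forced into $g.U_H$ after the pullback. The first point of Lemma \ref{visibility} then gives an intersection with $g.K$ on the flow line. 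Small $t$ corresponds to finitely many $\gamma$ and can be absorbed into $S$. The remaining delicate point is ensuring that the intersection with $g.K$ occurs within the parameter range $[0,t]$ rather than outside it. For this I would apply Lemma \ref{disjoint flow lines} and Corollary \ref{flow lines with same endpoint are close}, enlarging $S$ by finitely many elements if necessary.
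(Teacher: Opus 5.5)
You follow the paper's own route. You pull back by the first syllable, so the segment runs from a translate lying in $U_H'$ to one lying in $U_N'$. The choice of $K$ then forces a crossing of $K$, that is, of the intermediate translate $a_1.K$ before pulling back, which contradicts the definition of $\Gamma_K$. Your set $S_1$ also handles overlapping translates, which the paper ignores. The gap is in the final bookkeeping. Each exception you collect constrains only the first syllable $a_1$ or only the tail $a_1^{-1}\gamma=a_2\cdots a_m$, never both. For instance, if $a_1^{-1}\gamma\in S_0\cup S_1$, then $a_1\in H\smallsetminus\{e\}$ is completely free, so these $\gamma$ form the infinite set $(H\smallsetminus\{e\})(S_0\cup S_1)$. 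Symmetrically, an exceptional $a_1$ leaves the tail free. Your set $S_0S_1\cup S_1\cup(\text{exceptions})\cdot S_1$ does not contain these elements. Re-splitting cannot rescue two-syllable words $\gamma=a_1a_2$: there is only one splitting, and all it yields is ``$a_1$ or $a_2$ lies in a fixed finite set''. The paper's proof has exactly the same flaw: it writes $g=nhg'$, discards ``finitely many $n$ and $h$'', and concludes that $\Gamma_K\cap(H\cup N)^c$ is finite.

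What the argument does give comes from running it at every syllable boundary $j$. Pull back by $(a_1\cdots a_j)^{-1}$, which begins with $a_j^{-1}$, while $a_{j+1}\cdots a_m$ begins in the other factor. You get, for each $j$, that either the prefix $a_1\cdots a_j$ lies in a fixed finite set $F$ or the suffix $a_{j+1}\cdots a_m$ lies in a fixed finite set $F'$. Taking $j$ maximal with prefix in $F\cup\{e\}$ yields
\[
\Gamma_K\subset (F\cup\{e\})\,(H\cup N)\,(F'\cup\{e\}).
\]
Nothing in the argument excludes $hn\in\Gamma_K$ for infinitely many $h\in H$ when $n$ is one of the finitely many exceptional elements (those with $n.K\not\subset U_N'$ or $n.K\cap K\neq\emptyset$). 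So this two-sided containment, not the literal one, is what the method supports. It is harmless for critical exponents, since by Fact \ref{prop of cocycle} multiplying by finite sets on either side changes $\|\cdot\|_{\sigma}$ by a bounded amount.

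The ordering issue you flag is secondary. Take $U_H',U_N'$ to be small neighborhoods of $U_H,U_N$ in the compactification. The description of convergence to points of $M$ then forces the pulled-back segment to start at a time coordinate below, and end at one above, all time coordinates of points of $K$. Its backward endpoint lies in one of $U_H,U_N$ and its forward endpoint in the other. Hence the crossing of $K$ supplied by Lemma \ref{visibility} occurs inside $[0,t]$.
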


\begin{proof}
    For $g \in \Gamma \setminus (H\cup N)$, we can rewrite $$g = nhg' \text{ or } g = hng'$$ for some $h \in H\setminus\{e\}$ and $n \in N\setminus\{e\}$ and $g' \in \Gamma.$ 

    Now, if $g = nhg'\in \Gamma_K$ (the other case of $g = hng'$ can be proven analogously), then $hg'n \in \Gamma_{n^{-1}.K}$, i.e. there exists $v \in n^{-1}.K$ such that $\Tilde{\psi}^{[0,T]}(v)$ only intersects $n^{-1}.K$ and $hg'K$ for some $T >0$. By construction, except for finitely many $n $ and $h$, $n^{-1}.K \subset U_N' $ and $hg'.K \subset U_H'$. However, this raises a contradiction because $\Tilde{\psi}^{[0,T]}(v)$ has to intersect $K$. This shows that $\Gamma_K \cap (H\cup N)^c $ is finite, hence proves the lemma.
\end{proof}

\begin{proof}[Proof of Theorem \ref{relation of entropy at infinity}]
In order to prove the theorem, we just need to show that for all compact sets $K_1, K_2$ with $K \subset K_1\subset K_2$, there exists a finite set $F$ such that \begin{equation}\label{13}
        \Gamma_{K_2} \subset \left(S \cup H_{K_2} \cup N_{K_2}\right) \subset S \cup \left(\bigcup_{g ,g' \in 
    F}g\Gamma_{K_1} g'\right)
    \end{equation} where $H_{K_2}, N_{K_2}$ are the corresponding subsets outside of $K_2$ in $H$ and $N$ respectively. This containment would imply $$\delta_{K_2^c} (\Gamma) \leq \max \{\delta_{K_2^c}(H), \delta_{K^c_2}(N) \} \leq \delta_{K_1^c}(\Gamma).$$ Taking a limit as $K_1$ and $K_2$ get bigger gives us the desired equality. 

    Let us now prove the containment in Equation \ref{13}. The left inclusion is a consequence of the previous lemma. If $h \in \Gamma_{K_2} \cap H$, then there exists a flow line that passes through $K_2$ and $h.K_2$, and it does not intersect $\Gamma.K_2 \supset H.K_2$ in between, hence $h \in H_{K_2}. $ Similarly one can show $\Gamma_{K_2} \cap N \subset N_{K_2}.$ Then Lemma \ref{relating Gamma_K} implies $$\Gamma_{K_2}\setminus S \subset H_{K_2} \cup N_{K_2}.$$

    The right inclusion needs more work. We again just show there exists a finite set $F$ such that $H_{K_2} \subset \bigcup_{g, g' \in F}g\Gamma_{K_1} g'$ and the proof for $N_{K_2}$ goes similarly. 

    If $h \in H_{K_2}$, then there exists $v_h \in K_2$ such that $\tilde{\psi}^{[0,\infty)}(v_h)$ intersects $H.K_2$ only at $K_2$ and $h.K_2$ on some initial segment $[0,T]$. Let $g \in \Gamma\setminus \{e\}$ be such that $g.K_1  \cap \Tilde{\psi}^{[0,T]}(v_h) \neq \emptyset$. 

    \begin{claim}
         The intersection $g.K_1  \cap \Tilde{\psi}^{[0,T]}(v_h) $ is contained in $\Tilde{\psi}^{[0,r]}(v_h)$ or $\Tilde{\psi}^{[T-r,T]}(v_h)$ for some $r >0$ independent of $h$.
    \end{claim}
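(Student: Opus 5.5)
We argue by contradiction, using compactness in the same way as Lemma \ref{disjoint flow lines} and Corollary \ref{flow lines with same endpoint are close}. Suppose no such $r$ exists. Then there are sequences $h_n \in H_{K_2}$, $v_n := v_{h_n} \in K_2$, times $T_n>0$ and elements $g_n \in \Gamma\setminus\{e\}$ with the following properties. First, $\tilde{\psi}^{[0,T_n]}(v_n)$ meets $H.K_2$ only in $K_2 \cup h_n.K_2$. Second, $\tilde{\psi}^{s_n}(v_n)\in g_n.K_1$ for some $s_n$ with $s_n \to \infty$ and $T_n - s_n \to \infty$. In particular $T_n\to\infty$, so $h_n$ is escaping.

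The key step is to re-center at the intermediate copy. Applying $g_n^{-1}$, the points $g_n^{-1}\tilde{\psi}^{s_n}(v_n)$ lie in the compact set $K_1$. After passing to a subsequence they converge to some $w=(w^+,w^-,t_w)\in K_1$.

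Since $s_n\to\infty$ and $T_n-s_n\to\infty$, the argument of Lemma \ref{disjoint flow lines} applies. It uses the fourth item of Fact \ref{prop of cocycle} and the convergence action of $\Gamma$ on $\overline{M^{(2)}\times\R}$, and gives
\[
g_n^{-1}.K_2 \to w^- \quad\text{and}\quad g_n^{-1}h_n.K_2 \to w^+ .
\]
Here $w^+\neq w^-$, because $w\in K_1\subset\tilde{\Omega}_{\Gamma}$.

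Next we locate these limits using the Schottky structure of Lemma \ref{visibility}. Write $g_n$ in normal form. Up to finitely many exceptions, and after passing to a subsequence, we fall into one of two cases. In the first case $g_n^{-1}\in\Gamma^H$; then $g_n^{-1}.K_2\subset U_H'$, so $w^-\in U_H$. In the second case $g_n^{-1}\in\Gamma^N$; then $g_n^{-1}.K_2\subset U_N'$, so $w^-\in U_N$. The same dichotomy applies to $g_n^{-1}h_n$, according to the first letter of its normal form.

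Now compare the two words. Since $h_n\in H$, the words $g_n^{-1}$ and $g_n^{-1}h_n$ have the same first letter unless $g_n^{-1}$ is short or is absorbed by $h_n$. Both of these exceptional cases must be handled.

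In the generic case, both words start in the same factor. Then $w^+$ and $w^-$ both lie in $U_H$ or both lie in $U_N$, and this is not immediately contradictory. Instead we use the avoidance hypothesis. Consider the prefix of $g_n$ inside $H$. The flow segment from $K_2$ to $g_n.K_1$ and onward to $h_n.K_2$ forces, via the enlargement of $K$ (every flow line passing through $U_H'$ and $U_N'$ meets $K$) together with $K\subset K_1\subset K_2$, an intermediate translate of $K_2$ by an element of $H$. This contradicts the fact that the segment meets $H.K_2$ only in $K_2\cup h_n.K_2$.

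Concretely, the plan is to show that $g_n\in H\cdot F'$ for a finite set $F'$. Once this holds, $g_n.K_1$ lies within bounded flow-time of some $h'.K_2$ with $h'\in H$, which yields the forbidden intersection unless $h'\in\{e,h_n\}$. That in turn forces $s_n$ or $T_n-s_n$ to stay bounded, contradicting the choice of the sequences.

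The main obstacle is this last combinatorial step: proving that any intermediate copy $g.K_1$ along an $H$-excursion must come from $H$ up to a bounded error. The approach is to apply the ping-pong containment to the endpoints. Since both $\tilde{\psi}^{\pm\infty}(v_n)$ eventually lie near the $H$-limit set, Lemma \ref{geod ball and shadows} gives shadow containment. If $g_n$ had a nontrivial $N$-letter, the flow line would have to cross between $U_H'$ and $U_N'$, and hence pass through some $H$-translate of $K$. This is the step we would expect to require the most care.
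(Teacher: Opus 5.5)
There is a genuine gap: the step that produces the contradiction is announced but never carried out. Re-centering by $g_n^{-1}$ cannot contribute. As you note yourself, $g_n^{-1}$ and $g_n^{-1}h_n$ begin in the same free factor, so $w^{\pm}$ fall in the same $U_H$ or $U_N$.

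Everything then rests on $g_n\in H\cdot F'$, and for this you give only a heuristic built on a false premise. Only the forward endpoint $\tilde\psi^{\infty}(v_n)$ is controlled: it accumulates at $\lim h_n\in U_H$. The backward endpoint is constrained only by $v_n\in K_2$, and it may lie in $U_N$. Your end-game also does not follow. A flow line meeting $h'f.K_1$ with $f\in F'$ need not meet $h'.K_2$ unless $F'.K_1\subset K_2$, and $F'$ is only produced after $K_2$ has been fixed.

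The missing idea is to split by the first letter of $g_n$ (not of $g_n^{-1}$) and to use the forward endpoint of the limit of the un-recentered lines.

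\emph{Case $g_n\in\Gamma^N$.} If $v_n\to v\in K_2$, then $\tilde\psi^{s_n}(v_n)$ and $\tilde\psi^{T_n}(v_n)$ both converge to $v^+$. Hence $g_n.K_1$ and $h_n.K_2$ shrink to the same point. Lemma \ref{visibility} places them in the disjoint sets $U_N'$ and $U_H'$, a contradiction. Your avoidance mechanism does not apply here, because the $H$-prefix of $g_n$ is trivial and no intermediate $H$-translate is forced.

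\emph{Case $g_n=h_n'w_n$,} with $h_n'\in H\smallsetminus\{e\}$ the leading letter. Translate by $(h_n')^{-1}$, not by $g_n^{-1}$.
\begin{itemize}
\item For bounded $h_n'$, the same endpoint argument applies to $(h')^{-1}g_n.K_1$ and $(h')^{-1}h_n.K_2$.
\item Only for unbounded $h_n'$ does avoidance enter. The translated line runs from $(h_n')^{-1}.K_2\subset U_H'$ to $w_n.K_1\subset U_N'$, hence crosses $K\subset K_2$. So the original line meets $h_n'.K_2$ at a time that tends to infinity (by proper discontinuity) and is at most $s_n$. That point lies outside $K_2\cup h_n.K_2$, since the line meets these only within bounded time of $0$ and of $T_n$. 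This contradicts $h_n\in H_{K_2}$.
\end{itemize}
This is the paper's argument.

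That last step tacitly needs $w_n$ to lie outside the finite exceptional set of Lemma \ref{visibility}. The residual case of bounded $w_n$ is not treated separately in the paper's write-up. It is exactly what your $H\cdot F'$ reduction leaves over, so it would need a real argument rather than the bounded-flow-time assertion.
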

    \begin{claimproof}
    If not, then there exist sequences $\{g_n\} \subset \Gamma$, $\{h_n\} \subset H_{K_2},$ $v_n \in K_2$, and $t_n \rightarrow \infty$ such that $$\Tilde{\psi}^{[0,t_n]}(v_h) \cap H.K_2 \subset K_2 \cup h_n.K_2$$ and $$g_n.K_1 \cap \Tilde{\psi}^{[n,t_n-n]}(v_n) \neq \emptyset.$$ Up to taking subsequences, we can assume $g_n$ start with non-trivial elements all in $H $ or all in $N$. 

\emph{Case 1:}
    Let $\{g_n\} \subset \Gamma^N$. Then for all but finitely many $n$, $g_n.K_1 \cap U_N' \neq \emptyset.$ More importantly, up to taking subsequences, $g_n.K_1 \rightarrow x \in U_N$, while $v_n \rightarrow v \in K_2, $ and $h_n.K_2 \rightarrow y \in U_H$. This cannot happen as $U_H$ and $U_N$ are disjoint, but by construction $x= y = \lim_{s \rightarrow \infty} \Tilde{\psi}^s(v).$

\emph{Case 2:}
    Now we prove the inclusion when $\{g_n\} \subset \Gamma^H$. Let $h_n'$ be the non-trivial element in $H $ that starts each $g_n$. We can re-center everything by $(h_n')^{-1}$. 

If $\{h_n\}$ are bounded, then by taking further subsequences we can assume $h_n= h' \in H$ for all $n$. Now $(h')^{-1}g_n$ will again send $K_1$ to some points $x \in U_N$, and $(h')^{-1}h_n$ send $K_2$ to some $y \in U_H$, and by the same argument in case 1, we have a contradiction.

If $\{h_n\}$ are unbounded, then after re-centering we get $h_n^{-1}g_n.K_2 \subset U_N' $ and $h_n^{-1}.K_2 \subset U_H'$ for large $n$. However, Lemma \ref{visibility} then implies that the flow line $h_n^{-1}\tilde{\psi}^{[0,\infty)}(v_n)$ would pass through $K_2$, this contradicts the fact that $h_n $ are all in $H_{K_2}.$
\end{claimproof}
    With the claim above, we can form the finite set $$F:= \bigg\{g \in \Gamma\text{ }|\text{ }g.K_2 \cap \left(\bigcup_{s\in [o,r]}\Tilde{\psi}^s(K_2)\right) \neq \emptyset\bigg\}.$$
    Using the same argument as in Proposition \ref{containment of Gamma_K}, we have $$H_{K_2} \subset \bigcup_{g,g'\in F}g\Gamma_{K_1}g'$$ which concludes the proof of Theorem \ref{relation of entropy at infinity}. 
\end{proof}

\begin{example}\label{free product of anosov}
Let $G$ be a semi-simple Lie group, and $P_{\theta}$ be a parabolic subgroup in $G$. Let $N,H$ be two $P_{\theta}-$Anosov subgroups of $G$, then they act as convergence subgroups on the space $\mathcal{F_{\theta}}: = G/P_{\theta}$. If $H$ and $N$ are in Schottky position, then Theorem A in \cite{2023arXiv230102354D} implies their free product $H \ast N$ is $P_{\theta}$-Anosov, hence also acts as a convergence group on $\mathcal{F}_{\theta}$. Let $\Lambda_H, \Lambda_N,$ and $\Lambda_{N\ast H}$ denote the corresponding limit sets in $\mathcal{F}_{\theta}$.

Pick a linear functional $\phi$ that is positive on the Benoist limit cone $\mathcal{B}_{\theta}(H \ast N)$. Composing $\phi$ with the Cartan projection from $G$ to the positive Weyl chamber of its Cartan subalgebra $\mathfrak{a}$ gives an expanding cocycle. We denote this cocycle also by $\sigma_{\phi}$. Pre-composing $\phi$ with a Cartan involution on $\mathfrak{a}$ gives another expanding cocycle $\bar{\sigma}_{\phi}$, and there exists a Gromov product $G$ on $\mathcal{F}_{\theta}^{(2)}$ such that the triple $(\sigma_{\phi}, \bar{\sigma}_{\phi}, G)$ forms a GPS system. 

Assume further that $H$ is the image of some convex cocompact subgroup in $SO(d,1)$ embedded into $G$ and it has a normal subgroup $H'$ such that $H/H' $ is amenable. It is known (see e.g. \cite{roblin:hal-00018990}, \cite{Brooks1985}, \cite{2017arXiv170206115D}) that the critical exponent of $H'$ equals to $H$ in this setting. 
Moreover, we can pick $N$ such that $\delta_{\phi}(N) > d \geq \delta_{\phi}(H)= \delta_{\phi}(H')$. Then Theorem 4.1 in \cite{2023arXiv230411515C} implies that $$\delta_{\phi}(H'\ast N) > \delta_{\phi}(N)=\max\{\delta_{\phi}(H'), \delta_{\phi}(N)\} \geq \delta_{\infty}(H') = \delta_{\infty}(H'\ast N).$$ This implies $H'\ast N$ is SPR, and thus admits a finite BMS measure, but it need not be relative Anosov.

\end{example}
\begin{rmk}
This is a direct analog of the example of geometrically infinite subgroups of $Isom(\H^d)$ for $d \geq 4$ that admits a finite BMS measure constructed by Peign\'e \cite{peigne}.
\end{rmk}

The reason to start with two Anosov subgroups in the previous is to ensure that their free product is indeed a convergence group acting on $\mathcal{F}_{\theta}$, so that the assumptions of Theorem \ref{relation of entropy at infinity} are satisfied. This assumption is not necessary, as we will see in the next example. Although the free product of convergence groups may not be a convergence group, when both groups are transverse subgroups of a higher rank Lie group, their free product admits a representation where the image is transverse, and hence a convergence group.

Often times, we can embed a $P_{\theta}$-transverse subgroup into $PGL(\R^d)$ as a $P_{1,d-1}$-transverse linear group \cite[Theorem 6.2]{2023arXiv230411515C}. Hence we will just state the next example in the linear group case.

\begin{example}\label{projective geom}
Let $G:= PGL(d,\R)$ and let $\theta = \{1,d-1\}$. Then $$\mathcal{F}_{\theta}:= \{[x,V]\text{ }|\text{ }  x \in \P(\R^d) \text{ and } V \text{ a projective hyperplane containing } x\}.$$ Take two $P_{1,d-1}-$transverse subgroups $H_1,H_2 \subset G$ that preserve two open properly convex domains $\Omega_{\sigma'},\Omega_2 \subset \P(\R^d)$. 

Assume the corresponding limit sets $\Lambda_{i}:=\Lambda_{\Omega_i}(H_i)$ does not fill the whole boundary $\del \Omega_i$ for $i \in \{1,2\}$. Then a result by Danciger--Gu\'eritaud--Kassel \cite[Theorem 1.6]{2024arXiv240709439D} applies: there exists $g \in PGL(d,\R)$ such that the representation $\rho:H_1 \ast H_2 \rightarrow H_1 \ast gH_2g^{-1} $ is discrete and faithful, and its image preserves an open properly convex domain $\Omega \subset \P(\R^d).$

This properly convex domain $\Omega$ is constructed as taking the convex hull of a tree of properly convex domains where the vertices consist of translates of $\Omega_{\sigma'}' \subset \Omega_{\sigma'}$ and $g.\Omega_2' \subset g.\Omega_2$ under the action of $\rho(H_1 \ast H_2)$. It satisfies the condition that every three adjacent domains $(\Omega_{n-1},\Omega_n, \Omega_{n+1})$ in the tree are in what Danciger--Gu\'eritaud--Kassel called occultation position. The occulation position requires that $\Omega_i \cap \Omega_{i+1} \neq \emptyset$ for $i \in \{n-1,n\}$, and every projective line that passes $\overline{\Omega_{n-1}}$ and $\overline{\Omega_{n+1}}$ has to intersect $\Omega_n$; in other words, the domain $\Omega_n$ in the middle blocks $\overline{\Omega_{n-1}}$ and $\overline{\Omega_{n+1}}$ from ``seeing" each other. We refer the reader to Section 2 and 4 in \cite{2024arXiv240709439D} for a more detailed description of how the sets $\Omega_i'$ and $\Omega$ are constructed.

The construction implicitly forces $\Gamma:= \rho(H_1 \ast H_2)$ to be $P_{1,d-1}$-transverse. To see this, we only need to check that the limit set $\Lambda_{\Omega}(\Gamma)$ is transverse. Notice that any limit point of $\Gamma$ either lies in some translate of $\Lambda_i$ or is the limit of a sequence of convex domains in the tree. Let $x,y$ be two distinct limit points of $\Gamma$. If they lie in $\rho(h).\Lambda_i$ for some $h \in H_1\ast H_2$ and $i \in \{1,2\}$, then transversality of $H_i$ implies the transversality of $x$ and $y.$ If they lie in different translates of $\Lambda_i$, or if one of them is a limit of convex domains and the other lies in $\rho(h).\Lambda_i$, then the occultation position ensures that the projective line passes through $x,y$ intersects $\Omega$. If both are limits of convex domains in the tree, let $h \in H_1 \ast H_2$ be the element such that the geodesic rays from $e$ to $x$ and $y$ part, then the occultation position again forces the projective line passing through $x$ and $y$ to also pass through $\rho(h).\Omega_{\sigma'}'.$

If we further assume that $H_1,H_2$ are SPR with respect to a GPS system defined by some linear functional $\phi$, then Theorem \ref{relation of entropy at infinity} implies $$\delta_{\phi}(\Gamma)\geq \max\{\delta_{\phi}(H_1), \delta_{\phi}(H_2)\} > \max\{\delta_{\infty}(H_1), \delta_{\infty}(H_2)\} = \delta_{\infty}(\Gamma),$$ which in turn shows $\Gamma$ is also SPR.
\end{example}

\subsection{Relative Anosov Groups are SPR}\label{rel anosob}
We would also like to show that relative Anosov subgroups fit into the framework of SPR subgroups as they are an important class of groups that admit finite BMS measures.
In order to make the definition of relative Anosov subgroups in a higher rank Lie group precise, we first introduce a class of convergence groups that admits a geometrically finite action, and show that they are strongly positively recurrent.

\begin{defn}[Geometrically Finiteness]
Let $M$ be a compact perfect metrizable space. A convergence subgroup $\Gamma \subset Homeo(M)$ is \emph{geometrically finite} if every $\eta \in \Lambda_{\Gamma}$ is either 
\begin{itemize}
    \item \emph{conical} in the sense of convergence group action, as defined in Section \ref{conv group action},
    \end{itemize}
or
\begin{itemize}
    \item \emph{bounded parabolic}, that is, every $\gamma \in Stab_{\Gamma}(\eta)$ is parabolic, and $Stab_{\Gamma}(\eta)$ acts on $M \setminus \{\eta\}$ cocompactly.
\end{itemize}
    
\end{defn}

Let $(\sigma, \overline{\sigma},G)$ again be a continuous GPS system for a geometrically finite convergence group $\Gamma \subset Homeo(M)$ with $\delta_{\sigma}(\Gamma) < +\infty,$ and let $\sigma'$ be the exapnding cocycle that defines the $\Gamma$ action on $\tilde{\Omega}_{\Gamma}$. Further, let $$\mathcal{P}: =\{\gamma \cdot Stab_{\Gamma}(\eta)\cdot\gamma^{-1} \text{ }| \text{ } \gamma \in \Gamma \text{ and } \eta \in \Lambda_{\Gamma} \text{ is bounded parabolic}\}.$$ For any $P \in \mathcal{P}$, let $\delta_{\sigma}(P)$ be the critical exponent of the Poincar\'e series 
$$Q_{P,\sigma}(s): = \sum_{g \in P} \exp\left(-s \cdot ||g||_{\sigma}\right).$$

\begin{thm}\label{geom finite is spr}
    If $\delta_{\sigma}(P) < \delta_{\sigma}$ for all $P \in \mathcal{P}$, then $\Gamma$ is strongly positively recurrent.
\end{thm}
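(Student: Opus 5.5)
The plan is to exhibit a single compact set $K \supset K_0$ in $\tilde{\Omega}_{\Gamma}$ such that $\Gamma_K$ is, up to a finite set and finitely many left and right translations, contained in the union of the parabolic subgroups. Concretely, we aim for
$$\Gamma_K \subset S \cup \bigcup_{i=1}^{m}\bigcup_{g,g'\in S} g P_i g',$$
where $S$ is finite and $P_1,\dots,P_m$ is a finite set of representatives of the $\Gamma$-conjugacy classes in $\mathcal{P}$. Given this containment, the argument of Lemma \ref{decreasing entropy} together with Fact \ref{prop of cocycle} bounds $Q_{\Gamma_K,\sigma}(s)$ by a constant times $\sum_i Q_{P_i,\sigma}(s)$. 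Hence $\delta_{\sigma}(\Gamma_K)\le \max_i \delta_\sigma(P_i)<\delta_\sigma$. By the definition of $\delta_\infty$ this gives $\delta_\infty(\Gamma)<\delta_\sigma$, which is SPR. Finiteness of the representatives, i.e. finitely many orbits of bounded parabolic points, is the standard consequence of geometric finiteness for convergence groups, due to Tukia. Conjugates of $P_i$ by elements of $\Gamma$ have the same critical exponent up to the bounded shifts of Fact \ref{prop of cocycle}.

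To build $K$, we use the structure of geometrically finite convergence groups. For each parabolic point $\eta_i$ with stabilizer $P_i$, cocompactness of $P_i$ on $M\setminus\{\eta_i\}$ gives a compact fundamental set $D_i \subset \Lambda_\Gamma\setminus\{\eta_i\}$. We then choose $K$ large enough, containing $K_0$, so that the following ``thick-thin'' statement holds: every flow-line segment in $\tilde\Omega_\Gamma$ avoiding $\Gamma.K$ for a long time lies in a ``horoball neighbourhood'' $g.\mathcal{H}_i$ of some $g\eta_i$. Here $\mathcal{H}_i$ consists of points $(x,y,t)$ with both endpoints near $\eta_i$ relative to $P_i$-translates, and the segment enters and exits through $gP_i.K$. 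We prove this by contradiction, in the style of Lemma \ref{disjoint flow lines}. Suppose there are segments of length $\to\infty$ avoiding $\Gamma.K$ and not of this form. Recentre them at their midpoints by elements $\gamma_n$ and pass to a limit using the compactification of $\overline{M^{(2)}\times\R}$ and the convergence action (Proposition 7.2 of \cite{2024arXiv240409718B}). We obtain a limit configuration whose forward or backward endpoint is a limit point that is neither conical, which would force returns to a compact set by Fact \ref{alt char for conical} and Lemma \ref{geod ball and shadows}, nor approached along a parabolic stabilizer. Geometric finiteness forbids this.

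Once the thick-thin decomposition is in place, take $\gamma\in\Gamma_K$ with $t_{\gamma,K}$ large. The witnessing flow line leaves $K$, avoids $\Gamma.K$, and reenters at $\gamma.K$. By the previous step its middle portion lies in one horoball $g.\mathcal{H}_i$, and its entry and exit points lie in $g p.K'$ and $g p'.K'$ with $p,p'\in P_i$, for a slightly larger compact $K'$. The entry point is within bounded flow time of $K$, and the exit point is within bounded time of $\gamma.K$. Proper discontinuity, as in the finite sets $S$ of Proposition \ref{est of magnitude} and Proposition \ref{containment of Gamma_K}, then gives $gp\in S$ and $\gamma^{-1}gp'\in S$. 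Hence $\gamma\in S\,P_i\,S$. The elements with $t_{\gamma,K}$ bounded form a finite set by proper discontinuity. This yields the containment.

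The main obstacle is the thick-thin step: showing rigorously, with only the convergence-group compactification and no ambient hyperbolic geometry, that long excursions outside $\Gamma.K$ are trapped near a single parabolic point and are shadowed by its stabilizer. I would first try to choose $K$ as the set of $(x,y,0)$ with $(x,y)$ in a compact set of $\Lambda_\Gamma^{(2)}$ containing all pairs $(x,y)$ with $x\in D_i$, $y\notin$ a small neighbourhood of $\eta_i$ (for all $i$), together with $K_0$. I would then run the compactness/contradiction argument separately for the two possible degenerations, conical versus parabolic, of the recentred endpoints.
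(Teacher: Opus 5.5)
Your outer argument is the paper's. You take one compact $K\supset K_0$ whose translates cover the complement of a $\Gamma$-invariant family of horoballs. The entry/exit plus proper-discontinuity argument then gives $\Gamma_K\subset S\cup\bigcup_i\bigcup_{g,h\in S}gP_ih$; this is Proposition \ref{Gamma_W with P}, of which only this inclusion is needed. Finally you compare Poincar\'e series via Fact \ref{prop of cocycle}.

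The gap is the step you flag yourself, the thick--thin decomposition. The contradiction argument you sketch for it does not work.
\begin{itemize}
\item Nothing provides $\gamma_n$ bringing the midpoints of the long excursions into a fixed compact subset of $\tilde\Omega_\Gamma$. In exactly the cuspidal situation you must control, the midpoints go arbitrarily deep, and the recentred points converge to a point of $M$ in $\overline{M^{(2)}\times\R}$. No limiting flow line survives, so the limit cannot tell excursions ``of the required form'' from those that are not.
\item Recentring at the initial points in $K$ instead only yields a limiting ray whose endpoint is not conical relative to $K$. Even knowing that endpoint is bounded parabolic says nothing about where the finite segments exit, or that they are tracked by $P_i$ along their whole length.
\item The phrase ``neither conical nor approached along a parabolic stabilizer'' hides exactly the missing content. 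One must upgrade cocompactness of $P_i$ on $M\setminus\{\eta_i\}$ to cocompactness of $\Gamma$ on the complement of horoballs in the flow space.
\item Your candidate $K$ (pairs placed at time $0$) is never shown to have translates covering any such complement.
\end{itemize}

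The paper does not prove this step; it imports it. By Theorem \ref{cocompact action} \cite[Theorem 7.5]{2024arXiv240409718B}, $\Gamma$ acts cocompactly on $\tilde\Omega_\Gamma\setminus\mathcal{H}$, where $\mathcal{H}$ is the $\Gamma$-orbit of the horoballs $H(x_i,N_i)=\tilde\Omega_\Gamma\setminus Stab_\Gamma(x_i).N_i$. By \cite[Proposition 7.6]{2024arXiv240409718B}, the $N_i$ can be chosen so that horoballs based at distinct parabolic points are disjoint. Take $K$ with $\Gamma.K=\tilde\Omega_\Gamma\setminus\mathcal{H}$, enlarging the $N_i$ so that $K\supset K_0$. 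Then any flow segment avoiding $\Gamma.K$ lies in $\mathcal{H}$, hence by connectedness and disjointness in a single horoball, which is the trapping statement you wanted.

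Replace your compactness argument with these citations, or with a genuine proof of that cocompactness in the style of Tukia and Bowditch using the convergence action on $\overline{M^{(2)}\times\R}$ from \cite[Proposition 7.2]{2024arXiv240409718B}. With that change, the rest of your proposal is the paper's proof.
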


This statement is a direct analog of Proposition 7.16 in Schapira--Tapie \cite{2018arXiv180204991S}. Combining this with Theorem \ref{spr implies finite BMS} gives us a criterion for finiteness of BMS measures associated to a geometrically finite convergence group.

\begin{cor}\label{geom fin and fin bms}
    Let $\sigma'$ be an expanding cocycle that defines a $\Gamma$ action on $\Tilde{\Omega}_{\Gamma}$, and let $\Omega_{\Gamma}$ be the associated quotient of the flow space $\Tilde{\Omega}_{\Gamma}.$ If for all $\eps > 0$, there exists $C$ such that \begin{equation}\label{relate magnitudes'}
        ||\gamma||_{\sigma'} \leq C\cdot \exp{\left(\eps \cdot ||\gamma||_{\sigma}\right)}
    \end{equation}
    for all $\gamma \in \Gamma$, then $m_{\sigma}(\Omega_{\sigma'}) < +\infty$ for the BMS measure $m_{\sigma}$ defined in Section \ref{ps and bms measure}.
\end{cor}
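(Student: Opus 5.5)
The plan is to chain the two main results already established: Theorem \ref{geom finite is spr} supplies the strong positive recurrence, and Theorem \ref{spr implies finite BMS} converts it, together with the growth condition \eqref{relate magnitudes'}, into finiteness of $m_{\sigma}$. Throughout we work in the setting fixed just before Theorem \ref{geom finite is spr}: $\Gamma$ is a geometrically finite convergence group with a continuous GPS system $(\sigma,\bar{\sigma},G)$ and $\delta_{\sigma}<+\infty$. We also use the standing hypothesis that every $P \in \mathcal{P}$ satisfies $\delta_{\sigma}(P)<\delta_{\sigma}$. The corollary is meant as the combination of the two theorems, so I read this gap hypothesis as implicit in its statement.

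\textbf{Step 1.} Apply Theorem \ref{geom finite is spr} to obtain $\delta_{\infty}(\Gamma)<\delta_{\sigma}$. This is exactly the statement that $\Gamma$ is SPR with respect to $\sigma$ and the given $\sigma'$. I would check that the definition of $\delta_{\infty}(\Gamma)$ uses the same cocycle $\sigma'$ that defines the action on $\tilde{\Omega}_{\Gamma}$. This matters because the sets $\Gamma_K$, and hence the entropy at infinity, depend on the $\sigma'$-action; in Theorem \ref{geom finite is spr} that action was fixed to be the one given by $\sigma'$, so the two notions of SPR agree.

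\textbf{Step 2.} With SPR in hand and the magnitude comparison \eqref{relate magnitudes'} (identical to \eqref{relate magnitudes}), invoke Theorem \ref{spr implies finite BMS}. Internally this theorem runs as follows:
\begin{itemize}
\item Theorem \ref{spr implies divergent} gives divergence of $Q_{\Gamma,\sigma}$ at $\delta_{\sigma}$.
\item Remark \ref{rewrite delta infty} gives a compact $K \supset K_0$ with $\delta_{\sigma}(\Gamma_K)<\delta_{\sigma}$.
\item Condition \eqref{relate magnitudes'} with $\eps<\delta_{\sigma}-\delta_{\sigma}(\Gamma_K)$ makes $\sum_{\gamma\in\Gamma_K}\|\gamma\|_{\sigma'}e^{-\delta_{\sigma}\|\gamma\|_{\sigma}}$ converge, so $\Gamma$ is positive recurrent in the sense of Definition \ref{positive rec}.
\item Theorem \ref{pr implies finite} then gives $m_{\sigma}(\Omega_{\sigma'})<+\infty$.
\end{itemize}

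I do not expect a genuine obstacle, since all the analytic content sits in the earlier results. The only points needing care are bookkeeping:
\begin{itemize}
\item The parabolic gap hypothesis must be made explicit.
\item The BMS measure $m_{\Gamma}$ on $\Omega_{\Gamma}$ in Theorem \ref{spr implies finite BMS} must be identified with $m_{\sigma}$ on $\Omega_{\sigma'}$ from Section \ref{ps and bms measure}. These coincide because the Patterson--Sullivan measures are unique in the divergent case of Theorem \ref{dichotomy}.
\end{itemize}
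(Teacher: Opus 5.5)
Your proposal is correct and matches the paper's argument. The paper obtains the corollary by feeding the strong positive recurrence from Theorem \ref{geom finite is spr} into Theorem \ref{spr implies finite BMS}, whose internal chain you reproduce correctly: divergence, then positive recurrence via the growth bound on $||\cdot||_{\sigma'}$, then Theorem \ref{pr implies finite}. Your point that the parabolic gap hypothesis $\delta_{\sigma}(P)<\delta_{\sigma}$ for all $P\in\mathcal{P}$ is left out of the statement, yet is needed for this chain to apply, is well taken.
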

 
We will see in the next section that relative Anosov groups fall under the scope of strongly positively recurrent convergence groups, thus Theorem \ref{geom finite is spr} and Corollary \ref{geom fin and fin bms} recover many existing results on the finiteness of BMS measures of relative Anosov groups.

\subsubsection{\emph{Relative Anosov Groups}}

Let $\Gamma$ be a finitely generated group, $\mathcal{P}$ be a collection of finitely generated infinite subgroups of $\Gamma$. The pair $(\Gamma,\mathcal{P})$ is \emph{relative hyperbolic} if $\Gamma$ acts on a compact perfect metrizable space $M$ as a geometrically finite convergence subgroup and the maximal parabolic subgroups of $\Gamma$ are exactly conjugates of $P \in \mathcal{P}$.

Fixing a generating set $S$ of $\Gamma$, we can construct the Groves-Manning cusp space by gluing combinatorial horoballs to $Cay(\Gamma,S)$ (see Section 3 of \cite{2006math......1311G} for the explicit construction). The Groves--Manning cusp space is Gromov hyperbolic, and its Gromov boundary, denoted by $\p(\Gamma, \mathcal{P})$, serves as a compact perfect metrizable space where $\Gamma$ act on as a geometrically finite convergence group. 

There are various characterizations of relative Anosov subgroups (see Section 4 of \cite{2022arXiv220714737Z}), we will present just one here. Let $G$ be a non-compact semi-simple Lie group of higher rank with trivial center, let $P_{\theta} \subset G$ be a symmetric parabolic subgroup (here symmetric means, when conjugated by a longest word in the Weyl group of $G$, $P_{\theta}$ can be identified with the standard opposite parabolic subgroup $P_{\theta}^-$). Let $\mathcal{F}_{\theta}: =G/P_{\theta}$ be the associated partial flag manifold, and let $\mathfrak{a}_{\theta}$ be a subspace of the Cartan subalgebra associated to $\theta$. Given any $P_{\theta}-$divergent subgroup $\Gamma$ of $G$, one can define a limit set associated with $\Gamma$, denoted by $\Lambda _{\theta}(\Gamma)$, as $$\Lambda _{\theta}(\Gamma):=\{\lim_{n \rightarrow \infty} m_n P_{\theta} \text{ }|\text{ } \{g_n=m_n \cdot \exp(\kappa_{\theta}(g_n)) \cdot l_n\} \subset \Gamma \text{ escaping}\}$$ where $\kappa_{\theta}$ is the Cartan projection of $G$ to $\mathfrak{a_{\theta}}$, and the convergence is with respect to the topology on the partial flag manifold. We say $\Gamma$ is \emph{$P_{\theta}$-transverse} if $\Lambda_{\theta}(\Gamma)$ is a transverse set in $\mathcal{F}_{\theta}$. We refer the reader to a more detailed introduction of properties of semi-simple Lie groups in Section 2 of \cite{2023arXiv230411515C}.

\begin{defn}
A subgroup $\Gamma \subset G$ is \emph{$P_{\theta}$-Anosov relative to $\mathcal{P}$} if it is $P_{\theta}$-transverse, $(\Gamma, \mathcal{P})$ is a relatively hyperbolic pair, and there exists a boundary map $$\xi: \p (\Gamma, \mathcal{P})\rightarrow \mathcal{F}_{\theta}$$ that is $\Gamma$-equivariant and a homeomorphism onto $\Lambda _{\theta}(\Gamma).$
\end{defn}

There is an Iwasawa cocycle, defined by Quint in \cite{quintpsmeasurehr}, $B_{\theta}:\Gamma\times \mathcal{F}_{\theta}\rightarrow \mathfrak{a}_{\theta}$ that satisfies a vector version of the cocycle identity. As introduced in Example \ref{free product of anosov}, picking a linear functional $\phi \in \mathfrak{a}_{\theta}^*$ gives rise to a continuous GPS system $(\sigma_{\phi}, \bar{\sigma}_{\phi},G)$. Furthermore, since $\xi$ is $\Gamma-$equivariant and a homeomorphism between $\p(\Gamma, \mathcal{P})$ and $\Lambda_{\theta}(\Gamma)$, the geometrically finite convergence group action of $\Gamma $ on $\p(\Gamma, \mathcal{P})$ can be pushed to $\Lambda_{\theta}(\Gamma)$.  Moreover, for the relative Anosov pair $(\Gamma,\mathcal{P})$, Corollary 7.2 in \cite{2023arXiv230804023C} states that $\delta_{\sigma} (H) < \delta_{\sigma}(\Gamma)$ for all $H \in \mathcal{P}$. The following corollary is then a direct consequence of Theorem \ref{geom finite is spr}.

\begin{cor}\label{rel anosov is spr}
    Let $\Gamma \subset G$ be a $P_{\theta}-$Anosov group relative to $\mathcal{P}$, and let $\phi \in \mathfrak{a}_{\theta}^*$ be a linear functional with $\delta_{\sigma_{\phi}}(\Gamma)<+\infty$. Then $\Gamma$ is strongly positively recurrent, i.e. the entropy at infinity $\delta_{\infty}(\Gamma)$ defined with respect to the $\sigma-$magnitude $||\cdot ||_{\sigma}$ is strictly smaller than $\delta_{\sigma_{\phi}}(\Gamma)$.

\end{cor}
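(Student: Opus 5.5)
The corollary should follow by checking the two hypotheses of Theorem \ref{geom finite is spr} for the geometrically finite convergence action of $\Gamma$ on $\Lambda_{\theta}(\Gamma)$, and then quoting the critical exponent gap for peripheral subgroups. The plan has three steps.

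First, set up the framework. The boundary map $\xi:\p(\Gamma,\mathcal{P})\to\Lambda_\theta(\Gamma)$ is a $\Gamma$-equivariant homeomorphism, so the action of $\Gamma$ on $\Lambda_\theta(\Gamma)$ is conjugate to its action on $\p(\Gamma,\mathcal{P})$. That action is a geometrically finite convergence action, hence so is the one on $\Lambda_\theta(\Gamma)$. The conjugacy sends conical points to conical points and bounded parabolic points to bounded parabolic points, since both notions are defined purely in terms of the dynamics. Since $\Gamma$ is $P_\theta$-transverse, it acts on $\mathcal{F}_\theta$ as a convergence group with limit set $\Lambda_\theta(\Gamma)$, and we work with $M=\Lambda_\theta(\Gamma)$. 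On $M$ we use the GPS system $(\sigma_\phi,\bar\sigma_\phi,G)$ obtained from $\phi\circ B_\theta$, as in Example \ref{free product of anosov}. By hypothesis $\delta_{\sigma_\phi}(\Gamma)<+\infty$.

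Second, identify the family $\mathcal{P}$ used in Theorem \ref{geom finite is spr}. It consists of the stabilizers of bounded parabolic points together with their conjugates. By the definition of a relatively hyperbolic pair, these are exactly the conjugates of the peripheral subgroups $H\in\mathcal{P}$. Conjugation changes the $\sigma$-magnitude only by a bounded amount, by the first bullet of Fact \ref{prop of cocycle} applied with the finite set $\{\gamma,\gamma^{-1}\}$. Therefore $\delta_\sigma(\gamma H\gamma^{-1})=\delta_\sigma(H)$, and it is enough to check the gap for the peripheral subgroups themselves.

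Third, invoke \cite[Corollary 7.2]{2023arXiv230804023C}, which gives $\delta_{\sigma}(H)<\delta_\sigma(\Gamma)$ for every $H\in\mathcal{P}$. Theorem \ref{geom finite is spr} then yields $\delta_\infty(\Gamma)<\delta_{\sigma_\phi}(\Gamma)$, which is the SPR property.

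The main obstacle is making sure the hypotheses of the cited gap result match ours. In particular, the magnitude $\|\cdot\|_{\sigma_\phi}$ coming from the Iwasawa cocycle must agree, up to an additive constant, with $\phi\circ\kappa_\theta$, which is the quantity used to define critical exponents in \cite{2023arXiv230804023C}. I would check this using the expanding property: $\sigma_\phi(\gamma,x)$ is within a uniformly bounded distance of $\phi(\kappa_\theta(\gamma))$ whenever $x$ is uniformly transverse to the repelling flag of $\gamma$. This is the standard comparison between the Iwasawa and Cartan projections. Given that comparison, the two critical exponents coincide and the corollary follows.
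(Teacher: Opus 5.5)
Your proposal is correct and follows the paper's argument: transport the geometrically finite convergence action from $\partial(\Gamma,\mathcal{P})$ to $\Lambda_\theta(\Gamma)$ via $\xi$, quote \cite[Corollary 7.2]{2023arXiv230804023C} for the gap $\delta_\sigma(H)<\delta_\sigma(\Gamma)$ on peripheral subgroups, and apply Theorem \ref{geom finite is spr}. Your two extra checks are details the paper leaves implicit: conjugation invariance of $\delta_\sigma$ via Fact \ref{prop of cocycle}, and the fact that $\|\cdot\|_{\sigma_\phi}$ agrees with $\phi\circ\kappa_\theta$ up to bounded error so the cited critical exponents match; your one slip is harmless (a transverse group is a convergence group on $\Lambda_\theta(\Gamma)$, not on all of $\mathcal{F}_\theta$), since you take $M=\Lambda_\theta(\Gamma)$ anyway.
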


This corollary, combined with Corollary \ref{geom fin and fin bms}, encompasses many existing results that relate geometrically finiteness with the finiteness of BMS measure. It was first proven by Dal'bo--Otal--Peign\'e \cite{dalbootalpeigne} that geometrically finite negatively curved manifolds has finite BMS measure. Later, Blayac--Canary--Zhang--Zimmer\cite[Theorem 8.1]{2024arXiv240409718B} extended it to geometriaclly finite convergence groups under the assumption that there is an entropy gap between the critical exponent of each parabolic subgroups and that of $\Gamma$.

It is worth noting that the notion of SPR convergence groups is particularly nice for studying BMS measures associated to discrete subgroups of higher rank Lie group. One complication in higher rank symmetric spaces is that the unit tangent bundle of the space along with the geodesic flow on it is not the best flow space to work with, simply because there are too many directions. People have constructed various flow spaces associated to the same relative Anosov group and proved finiteness of the respective BMS measures.

In \cite{KimOh+2025+91+142}, Kim--Oh construct a flow space $\tilde{\Omega}_{\Gamma}:= \Lambda_{\theta}(\Gamma)^{(2)}\times \mathfrak{a}$, and project it to a one dimensional flow space $\tilde{\Omega}_{\Gamma, \phi}:=\Lambda_{\theta}(\Gamma)^{(2)}\times\R$ by sending $(\xi, \eta, v)$ to $(\xi, \eta, \phi(v))$, where $\phi \in \mathfrak{a}^*$ is a linear functional. They prove that the associated BMS measure on $\Gamma \setminus \tilde{\Omega}_{\Gamma, \phi}$ is finite \cite[Theorem 9.1]{KimOh+2025+91+142}. 
Another flow space associated to a relative Anosov group is constructed by Canary--Zhang--Zimmer in \cite{2023arXiv230411515C}. This involves taking a transverse representation $\rho$ of $\Gamma$ into $Aut(\Omega)\subset PGL(D, \R)$ for some properly convex domain $\Omega \subset \P(\R^D)$ as constructed in Section 4 in \cite{2023arXiv230411515C}. Define the subset $U:= \Lambda_{\Omega}(\rho(\Gamma))^{(2)}\times \R $ of the unit tangent bundle $ T^1\Omega$, where $ \Lambda_{\Omega}(\rho(\Gamma))$ is the full orbital limit set of $\rho(\Gamma)$, one can then study the geodesic flow on $U$. In a previous paper of the author \cite{2025arXiv250412448W}, Proposition 7.4 states that $\rho(\Gamma)\backslash T^1\Omega$ has finite BMS measure. 

Corollary \ref{geom fin and fin bms} provides a unifying framework for studying these flow spaces. Note that $\tilde{\Omega}_{\Gamma, \phi}$ and $U \subset T^1\Omega$ are the same topological space as $\Lambda_{\Omega}(\rho(\Gamma))$ is homeomorphic to $\Lambda_{\theta}(\Gamma)$ \cite[Observation 6.1]{2023arXiv230411515C}, the only difference is the cocycle that defines the $\Gamma$ actions on them. Kim--Oh use the same linear functional $\phi$ that defines $\tilde{\Omega}_{\Gamma, \phi}$, while Canary--Zhang--Zimmer use the Hilbert metric on the properly convex domain $\Omega$, which is quasi-isometric to $\log \frac{\mu_1}{\mu_D}(\rho(\cdot))$. Both choices satisfy the assumptions of Corollary \ref{geom fin and fin bms} (condition \ref{relate magnitudes'}), hence both finiteness theorems of the BMS measures can be derived from the corollary.


\subsubsection{\emph{Proof of Theorem \ref{geom finite is spr}}}
Let us now prove the theorem.
    We need to find a compact set $W \subset \Tilde{\Omega}_{\Gamma}$ such that $\delta_{W^c} < \delta_{\sigma}.$ We employ the ``thick-thin" decomposition of the flow space $\Omega_{\Gamma}$ proved by  Blayac--Canary--Zhu--Zimmer in \cite{2024arXiv240409718B} to construct such $W$. In order to make sense of this decomposition, we need to define horoballs in $\tilde{\Omega}_{\Gamma}$.

  As mentioned in Section \ref{Gamma_K}, the space $(M^{(2)} \times \R) \sqcup M =: \overline{M^{(2)} \times \R}$ admits a natural compactifying topology, and $\Gamma$ acts on $ \overline{M^{(2)} \times \R}$ also as a geometrically finite convergence group \cite[Section 7.1]{2024arXiv240409718B}.

\begin{defn}
    Let $x \in \Lambda_{\Gamma}$ be a bounded parabolic point. Let $K \subset \Lambda_{\Gamma}\setminus \{x\}$ be a compact set such that $Stab_{\Gamma}(x).K =\Lambda_{\Gamma}\setminus \{x\}$. Then for any compact neighborhood $N$ of $K$ in $\overline{M^{(2)} \times \R} $, we define the \emph{horoball associated to $N$ based at $x$} to be the subset $$H(x, N):= \Tilde{\Omega}_{\Gamma} \setminus (Stab_{\Gamma}(x).N).$$
\end{defn}

By a theorem of Tukia \cite{Tukia1998ConicalLP}, there are finitely many $\Gamma$ orbits of bounded parabolic fixed points. Let $\{x_1, ..., x_n\}$ be a collection that consists one representative from each orbit. For each $x_i$, fix a fundamental domain $K_i$ and the associated compact neighborhood $N_i$ as above. Then let $$\mathcal{H}:= \bigcup_{g \in \Gamma}\bigcup_{x_i} g.H(x_i, N_i).$$
Note that by construction, each $H(x_i,N_i)$ is open, hence $\Tilde{\Omega}_{\Gamma} \setminus  \mathcal{H}$ is closed.

\begin{thm}\cite[Theorem 7.5]{2024arXiv240409718B}\label{cocompact action}
    $\Gamma$ acts cocompactly on $\Tilde{\Omega}_{\Gamma} \setminus \mathcal{H}$.
\end{thm}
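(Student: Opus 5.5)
The plan is a compactness-plus-contradiction argument in the compactification $\overline{M^{(2)}\times\R}=(M^{(2)}\times\R)\sqcup M$. It uses the dichotomy that every point of $\Lambda_\Gamma$ is conical or bounded parabolic, together with the fact that $\Gamma$ acts on $\overline{M^{(2)}\times\R}$ as a convergence group.

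Suppose the action of $\Gamma$ on $\Tilde{\Omega}_\Gamma\setminus\mathcal H$ is not cocompact. Then there is a sequence $v_n\in\Tilde{\Omega}_\Gamma\setminus\mathcal H$ such that, for every choice of $\gamma_n\in\Gamma$, the sequence $\gamma_n v_n$ leaves every compact subset of $\Tilde{\Omega}_\Gamma$. The set $\Tilde{\Omega}_\Gamma\setminus\mathcal H$ is $\Gamma$-invariant and closed, and the ambient compactification is compact. So for any choice of $\gamma_n$ we may pass to a subsequence with $\gamma_n v_n\to z\in\Lambda_\Gamma$. I will derive a contradiction by producing new elements $\gamma_n'$ for which $\gamma_n' v_n$ subconverges inside $\Tilde{\Omega}_\Gamma$. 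The argument splits according to the type of $z$.

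\emph{Case 1: $z$ is bounded parabolic.} Since $\mathcal H$ is a union of $\Gamma$-translates of the horoballs $H(x_i,N_i)$, we have $z=g x_i$ for some $i$ and $g\in\Gamma$. Replacing $\gamma_n$ by $g^{-1}\gamma_n$, we may take $z=x_i$. The points $\gamma_n v_n$ avoid $H(x_i,N_i)$, so $\gamma_n v_n\in \mathrm{Stab}_\Gamma(x_i).N_i$. Choose $p_n\in\mathrm{Stab}_\Gamma(x_i)$ with $p_n^{-1}\gamma_n v_n\in N_i$. Since $N_i$ is compact, a subsequence converges to some $w\in N_i$. If $w\in\Tilde{\Omega}_\Gamma$, this is the desired contradiction. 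Otherwise $w$ lies in $N_i\cap M$. We would like to choose $N_i$ so that this set is contained in a small neighbourhood of the fundamental domain $K_i$, and in particular is bounded away from $x_i$. In that case $w\neq x_i$, and we reduce to a limit point $w\in\Lambda_\Gamma\setminus\{x_i\}$. If $w$ is again parabolic we repeat this step, so the real work is the conical case.

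\emph{Case 2: $z$ is conical.} Take $g_k$ with $g_k z\to a$ and $g_k|_{M\setminus\{z\}}\to b$, where $a\neq b$. Write $\gamma_n v_n=(x_n,y_n,t_n)$. By the description of convergence to $z$ in the compactification, after passing to a subsequence either $x_n\to z$ with $t_n\to+\infty$, or $y_n\to z$ with $t_n\to-\infty$, or both endpoints tend to $z$. We then pick $k=k(n)$ by a diagonal argument so that $g_{k(n)}\gamma_n v_n$ has endpoints converging to a transverse pair, one endpoint near $a$ and the other near $b$. At the same time we need the time coordinate $t_n+\sigma'(g_{k(n)},x_n)$ to stay bounded. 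For this we use the cocycle estimates of Fact \ref{prop of cocycle} and Fact \ref{properties of shadows}: along a conical sequence, $\sigma'(g_k,\cdot)$ grows like $\|g_k\|_{\sigma'}$ near $z$, so one can choose $k(n)$ to compensate $t_n$ up to a bounded error. The limit is then a point of $\Tilde{\Omega}_\Gamma$, which contradicts the choice of $v_n$.

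The main obstacle is the interaction of the two cases. The reduction in Case 1 may land on another parabolic point, and the time coordinate must be kept under control when choosing $k(n)$ in Case 2. To make the iteration in Case 1 terminate, I would first try to exploit that only finitely many parabolic orbits exist (Tukia). I would choose the $N_i$ small enough that the horoballs of distinct parabolic points are disjoint. Then a point outside $\mathcal H$ that is close to $x_i$ must lie in the "thick" part of $\mathrm{Stab}_\Gamma(x_i).N_i$, whose quotient by $\mathrm{Stab}_\Gamma(x_i)$ is compact away from the other cusps. I expect that getting this disjointness, and the uniform choice of the $N_i$ that it requires, is the genuinely delicate part of the proof.
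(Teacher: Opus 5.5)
The paper gives no proof of this statement; it quotes it from Blayac--Canary--Zhu--Zimmer. Your argument therefore has to stand on its own, and as written it has a genuine gap, centred on Case 2.

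\textbf{Case 2 is false as stated.} Your conical case never uses the hypothesis $\gamma_n v_n\notin\mathcal H$. The claim it would establish is that any sequence in $\tilde\Omega_\Gamma$ converging in $\overline{M^{(2)}\times\R}$ to a conical point has $\Gamma$-translates subconverging in $\tilde\Omega_\Gamma$. This is false, as the following example shows.
\begin{itemize}
\item Take $w_m\in H(x_i,N_i)$ escaping into the cusp at $x_i$, so that the orbits $\Gamma w_m$ leave every compact set. In the model case of a cusped hyperbolic surface, take unit vectors based at points climbing the cusp.
\item The closure of $H(x_i,N_i)$ meets $\Lambda_\Gamma$ only at $x_i$, so $w_m\to x_i$.
\item For a loxodromic $\ell$ we have $\ell^n x_i\to\ell^+$, because $x_i\neq\ell^-$. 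Choosing $m(n)$ diagonally, the points $\ell^n w_{m(n)}$ converge to the conical point $\ell^+$.
\item Case 2 would recentre these points into a compact set, which is impossible since their orbits escape.
\end{itemize}

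Concretely, your mechanism fails in three places.
\begin{itemize}
\item The values $\|g_k\|_{\sigma'}$ along a fixed conical sequence for $z$ have unbounded gaps in general. They skip the cusp excursions of the flow towards $z$, or the sequence may simply be sparse. So no choice of $k(n)$ compensates $t_n$ up to bounded error.
\item The estimate $t_n+\sigma'(g_k,x_n)\approx t_n-\|g_k^{-1}\|_{\sigma'}$ from Fact \ref{properties of shadows} needs $x_n\in S_\eps(g_k^{-1})$. That ties $k$ to the rate at which $x_n\to z$, not to $t_n$.
\item When $x_n,y_n\to z$ both, you need $g_{k(n)}$ to separate $x_n$ from $y_n$ at exactly their branching scale. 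A pre-chosen sequence does not provide this.
\end{itemize}
The recentring elements have to be produced from the points $\gamma_n v_n$ themselves. The condition $\gamma_n v_n\notin\mathcal H$ is exactly what must force such elements to exist. That uniform argument is the real content of the theorem, and it is missing from your proposal.

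\textbf{Case 1 does not close up either.} Every point of $\tilde\Omega_\Gamma\setminus\mathcal H$ lies in $\mathrm{Stab}_\Gamma(x_i).N_i$ for every $i$, whatever its limit. So this step only exchanges the boundary limit $z$ for another point $w\in N_i\cap\Lambda_\Gamma$. Moreover, $K_i$ necessarily contains parabolic points, for instance points of $\Gamma x_i\setminus\{x_i\}$. Nothing decreases from one step to the next, so the iteration need not terminate.

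The $N_i$ are also given in the statement. Shrinking them would only prove cocompactness of a smaller set, not the stated one.

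Finally, your proposed remedy is that the part of $\mathrm{Stab}_\Gamma(x_i).N_i$ outside $\mathcal H$ is compact modulo $\mathrm{Stab}_\Gamma(x_i)$ away from the other cusps. This is a localized restatement of the theorem itself. Disjointness of the horoballs does not supply the missing uniformity.
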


Moreover, Proposition 7.6 in \cite{2024arXiv240409718B} asserts that $N_i$ can be chosen in a way such that for all $g, h \in \Gamma$, and $1\leq i,j \leq n$, if $gx_i \neq hx_j$, then \begin{equation}\label{horoball disjoint}
    g.H(x_i, N_i) \cap h. H(x_j, N_j) = \emptyset.
\end{equation}

We can fix the choice of $N_i$ such that the property above holds. Theorem \ref{cocompact action} assures that, up to enlarging it, there exists a compact set $W$, such that $\Gamma.W = \Tilde{\Omega}_{\Gamma} \setminus \mathcal{H}$. There also exists $\gamma_i \in \Gamma$ for each $i$ such that the boundary of $\gamma_i. H(x_i,N_i)= H(\gamma_i. x_i, \gamma_i.N_i)$ intersects $W$ nontrivially. 

\begin{prop}\label{Gamma_W with P}
    There exists a finite set $S\subset \Gamma$ such that $$\Gamma_W =S \cup \left(\bigcup_{x_i} \bigcup_{g,h \in S} g\cdot  Stab_{\Gamma}(x_i)\cdot h\right).$$
\end{prop}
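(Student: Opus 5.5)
The plan is to prove the two inclusions separately. Both rest on the thick--thin picture from Theorem~\ref{cocompact action} and on the disjointness \eqref{horoball disjoint} of horoballs based at distinct parabolic points. Throughout, $W$ is the compact set with $\Gamma.W=\tilde{\Omega}_{\Gamma}\setminus\mathcal{H}$, and $\gamma_i\in\Gamma$ are the elements with $\partial\big(\gamma_i.H(x_i,N_i)\big)\cap W\neq\emptyset$. Set $P_i:=\gamma_i\,Stab_{\Gamma}(x_i)\,\gamma_i^{-1}=Stab_\Gamma(\gamma_i x_i)$. The finite set $S$ will be assembled from three pieces: the elements $\gamma_i^{\pm1}$; the finite set $\{g\in\Gamma : g.W\cap W_{[-R,R]}\neq\emptyset\}$ as in the proof of Proposition~\ref{containment of Gamma_K}; and the elements of $\Gamma_W$ whose transition time is bounded.

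\emph{Inclusion $\Gamma_W\subset S\cup\bigcup g\,Stab(x_i)\,h$.} Take $\gamma\in\Gamma_W$, witnessed by $v\in W$ and $t\geq0$ with $\tilde\psi^t(v)\in\gamma.W$ and $\tilde\psi^{[0,t]}(v)\cap\Gamma.W\subset W\cup\gamma.W$.

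\begin{itemize}
\item If $t$ is bounded, then $\gamma$ lies in a finite set, by proper discontinuity.
\item Otherwise the open segment $\tilde\psi^{(a,b)}(v)$ between the last exit from $W$ and the first entry into $\gamma.W$ lies in $\mathcal{H}$. By connectedness it lies in a single horoball $g.H(x_i,N_i)$, because distinct horoballs are disjoint by \eqref{horoball disjoint}.
\item The endpoint $\tilde\psi^a(v)$ lies in $W\cap\partial(g.H(x_i,N_i))$. Since $W$ is compact and the horoballs are locally finite, only finitely many horoballs meet $W$. Hence $g$ ranges over finitely many cosets $g_0\,Stab(x_i)$, with $g_0\in S$.
\item The other endpoint $\tilde\psi^b(v)$ lies in $\gamma.W\cap\partial(g.H)$. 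Pulling back by $\gamma^{-1}$ gives $\gamma^{-1}g\in h_0\,Stab(x_i)$ for some $h_0$ in the same finite set.
\item Combining the two, $\gamma\in g_0\,Stab(x_i)\,h_0^{-1}$.
\end{itemize}

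\emph{Reverse inclusion.} Fix $i$, $p\in Stab(x_i)$, and put $p'=\gamma_ip\gamma_i^{-1}\in P_i$. Choose $w\in W\cap\partial(\gamma_i.H(x_i,N_i))$, say $w=(a,b,s)$; then $p'.w\in p'.W$ also lies on the boundary of the same horoball. I would construct a flow line from a point $u$ near $w$ to a point near $p'.w$ that runs inside $\gamma_i.H(x_i,N_i)$ in between. The candidate is the flow line with backward endpoint $b$ and forward endpoint $p'.a$; it is transverse, since these are distinct points of $\Lambda_\Gamma$. Its intersections with $\Gamma.W$ occur only near the two boundary crossings, because its interior is contained in the horoball, which is disjoint from $\Gamma.W$. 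So the part of the line meeting $\Gamma.W$ lies within bounded distance of $w$ and of $p'.w$. This uses Corollary~\ref{flow lines with same endpoint are close} together with the compactness of $Stab(x_i)\backslash\partial H$.

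Taking the last translate $g.W$ met near $w$ and the first translate $p'h.W$ met near $p'.w$, the argument of the Claim in Proposition~\ref{est of magnitude} gives $g^{-1}p'h\in\Gamma_W$, with $g,h$ in a fixed finite set. This holds for every $p$, with uniform constants by cocompactness of $Stab(x_i)$ on the boundary. Hence every element of $g\,Stab(x_i)\,h$ is accounted for by $\Gamma_W$ after the finite corrections in $S$. I then fix $S$ once and for all, as the union of all finite sets produced in both directions, so that the two containments combine into the stated equality.

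The main obstacle is the reverse direction. One must show that the chosen flow line between the two boundary points genuinely stays in one horoball and meets $\Gamma.W$ only in uniformly bounded neighbourhoods of its two ends, with bounds independent of $p$. I would argue by contradiction with a sequence $p_n\to\infty$: pulling back by the group elements and passing to limits in $\overline{M^{(2)}\times\R}$, as in Lemma~\ref{disjoint flow lines}, produces a limiting flow line through $W$ whose endpoint coincides with the parabolic point. That contradicts transversality.

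I also flag that the literal equality depends on absorbing the finitely many exceptional products into $S$. If this cannot be arranged, the argument still yields the two-sided comparison $\Gamma_W\subset S\cup\bigcup S\,Stab(x_i)\,S$ and $Stab(x_i)\subset\bigcup S\,\Gamma_W\,S$. That pair is all that is needed afterwards to compare $\delta_\sigma(\Gamma_W)$ with $\max_i\delta_\sigma(Stab(x_i))$.
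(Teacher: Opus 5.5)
Your forward inclusion is correct, and it is more careful than the paper's. Connectedness of the excursion together with \eqref{horoball disjoint} puts it in a single horoball. The coset bookkeeping at the entry and exit points then yields $\gamma\in g_0\,Stab_{\Gamma}(x_i)\,h_0^{-1}$, which is exactly the step the paper only asserts.

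The local finiteness you invoke needs a short justification; it should be stated for horoballs whose \emph{boundary} meets $W$, since no open horoball meets $W\subset\Gamma.W$. Choose $N_i$ to avoid $x_i$. Then $\overline{H(x_i,N_i)}$ meets $\Lambda_{\Gamma}$ only at $x_i$, because $Stab_{\Gamma}(x_i).int(N_i)$ is an open set containing $\Lambda_{\Gamma}\setminus\{x_i\}$ and disjoint from $H(x_i,N_i)$. So $C_i:=\overline{H(x_i,N_i)}\cap N_i$ is a compact subset of $\tilde{\Omega}_{\Gamma}$ with $\partial H(x_i,N_i)\subset Stab_{\Gamma}(x_i).C_i$, and proper discontinuity finishes.

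This direction, combined with Fact~\ref{prop of cocycle}, already gives $\delta_{\sigma}(\Gamma_W)\le\max_i\delta_{\sigma}(P_i)$. That bound is all Theorem~\ref{geom finite is spr} uses. Your caveat about the literal equality is justified. As written, it forces $g\,Stab_{\Gamma}(x_i)\,h\subset\Gamma_W$ for all $g,h\in S$. Neither your argument nor the paper's delivers that; the paper's Case 2 only reaches a conjugate $(\gamma')^{-1}g_i\gamma'\in\Gamma_W$.

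The reverse inclusion, however, has a genuine gap at the step you flag.

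First, you claim that $(p'a,b,\R)$ stays inside $\gamma_i.H(x_i,N_i)$ between its two crossings. That presupposes a convexity of horoballs which is not available here. $H(x,N)$ is merely $\tilde{\Omega}_{\Gamma}\setminus Stab_{\Gamma}(x).N$, and nothing prevents a flow line from leaving and re-entering it.

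Second, the proposed contradiction is not one. A limiting flow line through $W$ with \emph{one} endpoint at $\xi:=\gamma_i x_i$ is perfectly legitimate, e.g.\ $(\xi,b,\R)\in\tilde{\Omega}_{\Gamma}$; transversality only forbids the two endpoints from coinciding. Moreover, convergence of $(p_n'a,b,\R)$ to $(\xi,b,\R)$ on compact time intervals gives no control over visits to $\Gamma.W$ at times tending to infinity.

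The missing input is that $\xi$ is not conical; this is where parabolicity genuinely enters. The paper uses it in Case 2, through Lemma~\ref{geod ball and shadows} and the shrinking shadows of Fact~\ref{properties of shadows}, to exclude unboundedly many intermediate translates. Disjointness \eqref{horoball disjoint} handles Case 1.

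Your construction can be repaired in the same spirit:
\begin{enumerate}
\item By the argument of Lemma~\ref{limit set outside of K}(3), $(\xi,b,\R)$ meets $\Gamma.W$ only at bounded times. Hence it lies in $\gamma_i.H(x_i,N_i)$ after some fixed time $T_0$.
\item For large $n$, the first visit of $(p_n'a,b,\R)$ to $\Gamma.W$ after $T_0$ therefore lies on $\partial(\gamma_i.H(x_i,N_i))\subset Stab_{\Gamma}(\xi).\gamma_iC_i$. So it lies in some $q_nf.W$ with $q_n\in Stab_{\Gamma}(\xi)$ and $f$ in a finite set.
\item $q_n$ cannot stay bounded, since otherwise the limit line would meet $\Gamma.W$ after $T_0$.
\item Now pull back by $q_n^{-1}$ as in Lemma~\ref{disjoint flow lines}. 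Unless $q_n^{-1}p_n'$ stays bounded, this gives a limit line through $f.W$ with \emph{both} endpoints equal to $\xi$, which is the real contradiction.
\end{enumerate}
So this visit must lie within bounded distance of the far end, uniformly in $n$.
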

\begin{proof}
We first prove the inclusion $\Gamma_W\subset S \cup \left(\bigcup_{x_i} \bigcup_{g,h \in S} g\cdot  Stab_{\Gamma}(x_i)\cdot h\right)$.
    For any $\gamma \in \Gamma_W$, there exist $v \in W$ and $t_{\gamma} \in (0, \infty)$ such that $$\Tilde{\psi}^{[0, t_{\gamma}]}(v) \cap \Gamma.W \subset W \cup \gamma.W.$$ There exists $R >0$ such that for $\gamma \in \Gamma_W$ with $t_{\gamma}>2R$, we have $$\Tilde{\psi}^{[R, t_{\gamma}-R]}(v) \cap \Gamma.W = \emptyset.$$ By construction, $\Gamma.W = \Tilde{\Omega}_{\Gamma} \setminus \mathcal{H}, $ then $\Tilde{\psi}^{[R, t_{\gamma}-R]}(v)$ stays in some horoball $\gamma_i.H(x_i,N_i)$. Therefore, by the same argument as in the proof of Lemma \ref{containment of Gamma_K}, there exists a finite set $S'$ such that for some $g,h \in S'$ and $v' \in g .W$, the flow line defined by $v'$ intersects $g.W$ and $\gamma h.W$ and no other translates of $W$ in between, and this initial segment stays in the horoball $\gamma_i.H(x_i,N_i)$. This implies $a = g^{-1}\gamma h \in Stab_{\Gamma}(g^{-1}\gamma_i.x_i)= (g^{-1}\gamma_i)Stab_{\Gamma}(x_i)(g^{-1}\gamma_i)^{-1} $. Taking $S= S' \cup \{\gamma_i\}\cup \{\gamma \in \Gamma_W \text{ with } t_{\gamma} \leq 2R\}$ proves the first inclusion.

   We prove the other inclusion by contradiction. Assume there exist $x\in \{x_1,\cdots,x_n\}$  and an escaping sequence $\{g_i\} \subset Stab_{\Gamma}(x) \smallsetminus \Gamma_{W}$. Then by definition of $\Gamma_W$, corresponding to each $g_i$, there exist $\gamma_i \in \Gamma$ and $t_i>0$ such that for all $v= (v^+,v^-,s_v) \in W$ and $ t_v >0$ with $\tilde{\psi}^{t_v}(v) \in g_i.W$, one has $\tilde{\psi}^{t_i}(v)  \cap \gamma_i.W \neq \emptyset.$  Fix such $v$, We now examine this by cases.
   
   \emph{Case 1}: 
   Assume $t_i $ and $t_v-t_i$ both converge to infinity as $i \rightarrow \infty$. By the choice of $W$, once the flow line $\tilde{\psi}^{[0,\infty)}(v)$ left $W$, it has to enter some horoball in $\mathcal{H}$ before it enters $\gamma_i.W$. We can pull everything back by $\gamma_i^{-1}$, and since by construction the set of horoballs are pairwise disjoint, we can assume that, up to taking subsequence, $\gamma_i^{-1}.H(x,N)$ stabilizes at some horoball $H(x',N') \in \mathcal{H}$ for all $i$, or equivalently $\gamma_i^{-1} x= x'$ for all $i$. We then have $\gamma_i^{-1} \in Stab_{\Gamma}(x')$ and $\gamma_i^{-1}g_i\gamma_i \in Stab_{\Gamma}(x')$. Moreover, since $t_i $ and $t_v-t_i$, $\gamma_i^{-1},$ and $\gamma_i^{-1}g_i\gamma_i$ both converge to $x'$ as $i$ goes to infinity. This gives a contradiction as the flow lines $\gamma_i^{-1}.\tilde{\psi}^{[0,\infty)}(v)$ pass through the compact sets $\gamma_i^{-1}.W, W$, and $\gamma_i^{-1}g_i.W$ for all $i$, implying $\gamma_i^{-1},$ and $\gamma_i^{-1}g_i\gamma_i$ must converge to distinct points on the boundary $M.$

   \emph{Case 2}: If $t_i$ or $t_v-t_i$ is bounded. We will prove the case when $t_i$ is bounded, and the other case can be proven similarly by re-centering everything by $g_i^{-1}$. Up to taking subsequences, we can assume $\gamma_i= \gamma$ for some $\gamma \in \Gamma$. We then pull everything back by $\gamma^{-1}$ and show $\gamma^{-1}g_i \gamma \in Stab_{\Gamma}(\gamma^{-1}x) \cap \Gamma_W$ for large $i$. If not, then we can find $h_i \in \Gamma$ such that every flow line that passes through $W$ and $(\gamma^{-1}g_i\gamma).(\gamma^{-1}W)= \gamma^{-1}g_i.W$ also intersects $h_i.W$. Repeating the process for the new triple $(W, h_i.W, \gamma^{-1}g_i \gamma.W)$ until we find some $\gamma' \in \Gamma$ such that $(\gamma')^{-1}g_i \gamma' \in Stab_{\Gamma}((\gamma')^{-1}x) \cap \Gamma_W$. Note that this process has to terminate within finite many steps. Otherwise, there exist infinitely many elements $\alpha_j \in \Gamma$ such that any flow line $\tilde{\psi}^{[0,\infty)}(v)$ that passes through $W$ and $g_i.W$ also passes through $\alpha_j.W$ for all $j$. Lemma \ref{geod ball and shadows} implies that $v^+ \in S_{\eps}(g_i) \cap S_{\eps}(\alpha_j)$ for $i, j$ large enough. Moreover, Fact \ref{properties of shadows} says that the shadows $S_{\eps}(g_i) $ converge to $x$, then closeness of shadows ensures $x \in S_{\eps}(\alpha_j)$ for large $j$. This implies that $x$ is conical, which cannot happen as $x$ is assumed to be bounded parabolic. We again have a contradiction and this concludes the proof of the proposition.
\end{proof}

Let $P_i \in \mathcal{P}$ be the stabilizer of $x_i$. Proposition \ref{Gamma_W with P} implies that $$\delta_{\sigma}(\Gamma_W)= \max \delta_{\sigma}(P_i).$$
This in turn implies $$\delta_{\sigma}(\Gamma_W) = \max \delta_{\sigma}(P_i) <\delta_{\sigma}(\Gamma).$$ We can also enlarge the sets $N_i$ (or equivalently shrink the horoballs) so that $W$ is large enough that it contains $K_0$ as defined in Section \ref{Gamma_K}. Hence, Remark \ref{rewrite delta infty}, combined with the inequality above, implies that $$\delta_{\infty} \leq \delta_{\sigma}(\Gamma_W) <\delta_{\sigma}(\Gamma).$$ This concludes the proof of Theorem \ref{geom finite is spr} as we find the desired compact set $W$.



It is also worth noting that the framework of strongly positively recurrent subgroups provides an alternative proof to the result that the Poincar\'e series of a relatively Anosov group $\Gamma$ diverges at its critical exponent $\delta_{\sigma}$ \cite[Theorem 8.1]{2023arXiv230804023C}. One can see this by realizing that with the compact set $W$ constructed in the proof above, the set $\mathcal{L}_{W^c}= \Gamma.\Lambda_{W^c}$ consists exactly of all bounded parabolic limit points. Then the proof of Theorem \ref{spr implies divergent} shows that $\mathcal{L}_{W^c}$ has measure zero, which in turn implies the conical limit set has full measure, and by the dichotomy (Theorem \ref{dichotomy}), the Poincar\'e series of $\Gamma$ diverges at its critical exponent.

\bibliographystyle{plain}
\bibliography{finiteBMS}
\end{document}